\numberwithin{equation}{section}
\newcommand\R{\mathbb{R}}
\newcommand\N{\mathbb{N}}
\renewcommand\i{{\rm 1\kern -.3600em 1}}
\newtheorem{theorem}{Theorem}[section]
\newtheorem{corollary}[theorem]{Corollary}
\newtheorem{lemma}[theorem]{Lemma}
\newtheorem{proposition}[theorem]{Proposition}
\theoremstyle{remark}
\newtheorem{definition}[theorem]{Definition}
\theoremstyle{remark}
\theoremstyle{remark}
\newtheorem{remark}[theorem]{Remark}
\begin{document}

\vspace{-20mm}
\begin{center}{\Large \bf
An infinite dimensional umbral calculus}
\end{center}

{\large Dmitri Finkelshtein}\\ Department of Mathematics,
Swansea University, Singleton Park, Swansea SA2 8PP, U.K.;
e-mail: \texttt{d.l.finkelshtein@swansea.ac.uk}\vspace{2mm}

{\large Yuri Kondratiev}\\ Fakult\"at f\"ur Mathematik, Universit\"at Bielefeld,
            33615~Bielefeld, Germany;\\
e-mail: \texttt{kondrat@mathematik.uni-bielefeld.de}\vspace{2mm}

{\large Eugene Lytvynov}\\ Department of Mathematics,
Swansea University, Singleton Park, Swansea SA2 8PP, U.K.;
e-mail: \texttt{e.lytvynov@swansea.ac.uk}\vspace{2mm}

{\large Maria Jo\~{a}o Oliveira}\\ Departamento de Ci\^encias e Tecnologia, Universidade Aberta,
            1269-001 Lisbon, Portugal; CMAF-CIO, University of Lisbon, 1749-016 Lisbon, Portugal;\\
e-mail: \texttt{mjoliveira@ciencias.ulisboa.pt}\vspace{2mm}

{\small
\begin{center}
{\bf Abstract}
\end{center}
\noindent
The aim of this paper is to develop foundations of umbral calculus on the space $\mathcal D'$  of distributions on $\mathbb R^d$, which  leads to a general theory of Sheffer polynomial sequences on $\mathcal D'$.  We define a sequence of monic polynomials on $\mathcal D'$, a  polynomial sequence of binomial type,  and a Sheffer sequence. We present equivalent conditions  for a   sequence of monic polynomials on $\mathcal D'$  to be of binomial type or a Sheffer sequence, respectively. 
 We  also construct a  lifting of a sequence of monic polynomials on $\mathbb R$ of binomial type to a polynomial sequence of binomial type on $\mathcal D'$, and a lifting of a Sheffer sequence on $\mathbb R$ to a Sheffer sequence on $\mathcal D'$. Examples of lifted polynomial sequences  include the falling and rising factorials on $\mathcal D'$, Abel, Hermite, Charlier, and Laguerre polynomials on $\mathcal D'$. Some of these polynomials have already appeared in different branches of infinite dimensional (stochastic) analysis and played there a fundamental role.
 } \vspace{2mm}

{\bf Keywords:} Generating function; polynomial sequence on $\mathcal D'$;
polynomial sequence of binomial type on $\mathcal D'$; Sheffer sequence on $\mathcal D'$; shift-invariance; umbral calculus  on $\mathcal D'$.\vspace{2mm}

{\bf 2010 MSC. Primary:} 05A40, 46E50. {\bf Secondary:} 60H40, 60G55.

\section{Introduction}

In its modern form, umbral calculus is a study of shift-invariant linear operators acting on polynomials,  their associated polynomial sequences of binomial type, and  Sheffer sequences (including  Appell sequences). We refer to the seminal papers \cite{MR,RKO,RR}, see also the monographs \cite{KRY,Roman}. Umbral calculus has applications in combinatorics, theory of special functions, approximation theory, probability and statistics, topology, and physics, see e.g.\ the survey paper \cite{DBL}  for a long list of references.

Many extensions of umbral calculus to the case of polynomials of several, or even infinitely many variables were discussed e.g.\ in \cite{BBN,Brown,GJ,Michor,Parrish,Reiner,Roman_multivariate,SA,Ueno}, for a longer list of such papers see the introduction to \cite{DBLR}.  Appell and Sheffer sequences of polynomials of several noncommutative variables arising in the context of free probability, Boolean probability, and conditionally free probability were discussed in \cite{Anshelevich1,Anshelevich4,Anshelevich5}, see also the references therein.
 
 The paper \cite{DBLR} was a pioneering (and seemingly unique) work in which elements of basis-free umbral calculus were developed on an infinite dimensional space, more precisely, on a real separable Hilbert space $\mathcal H$.
 This paper discussed, in particular, shift-invariant linear operators acting on the space of polynomials on $\mathcal H$,  Appell sequences, and examples of polynomial sequences of binomial type.

In fact, examples of Sheffer  sequences, i.e., polynomial sequences with generating function of a certain exponential type, have appeared in infinite dimensional analysis on numerous occasions.
Some of these polynomial sequences are orthogonal with respect to a given probability measure on an infinite dimensional space, while others are related to analytical structures on such spaces.
Typically, these polynomials are either defined on a co-nuclear space $\Phi'$ (i.e, the dual of a nuclear space $\Phi$), or on an appropriate subset of $\Phi'$. Furthermore, in majority of examples, the nuclear space $\Phi$  consists of (smooth) functions on an underlying space $X$. For simplicity, we choose to work in this paper with the Gel'fand triple
$$\Phi=\mathcal D\subset L^2(\R^d,dx)\subset\mathcal D'=\Phi'.$$
Here $\mathcal D$ is the nuclear space of smooth compactly supported functions on $\R^d$, $d\in\mathbb N$, and $\mathcal D'$ is the dual space of $\mathcal D$, where the dual pairing between $\mathcal D'$ and $\mathcal D$ is obtained by continuously extending the inner product in  $ L^2(\R^d,dx)$.

 Let us mention several known examples of  Sheffer sequences on $\mathcal D'$ or its subsets:

\begin{itemize}
\item[(i)] In infinite dimensional Gaussian analysis, also called white noise analysis, Hermite polynomial sequences on $\mathcal D'$ (or rather on $ S'\subset\mathcal D'$, the Schwartz space of tempered distributions)
    appear as polynomials orthogonal with respect to Gaussian white noise measure,  see e.g.\ \cite{BK,HKPS,HOUZ,Obata}.

\item[(ii)] Charlier polynomial sequences on the configuration space of counting Radon measures on $\R^d$, $\Gamma\subset\mathcal D'$, appear as polynomials orthogonal with respect to Poisson point process on $\R^d$, see \cite{IK,Kondratievetal,KKO}.

\item[(iii)] Laguerre polynomial sequences on the cone of discrete Radon measures on $\R^d$, $\mathbb K\subset \mathcal D'$, appear as polynomials orthogonal with respect to the gamma random measure, see \cite{KL,Kondratievetal}.

\item[(iv)] Meixner polynomial sequences on $\mathcal D'$ appear as polynomials orthogonal with respect to the Meixner white noise measure, see \cite{L1,L2}.

\item[(v)] Special polynomials on the configuration space $\Gamma\subset\mathcal D'$ are used to construct the $K$-transform,  see e.g.\ \cite{BKKL,KK,KKO2}. Recall that the $K$-transform determines the duality between point processes on $\mathbb R^d$ and their correlation measures. These polynomials will be identified in this paper as the infinite dimensional analog of the falling factorials (a special case of the Newton polynomials).

\item[(vi)] Polynomial sequences on $\mathcal D'$ with generating function of a certain exponential type are used  in biorthogonal analysis related to general measures on $\mathcal D'$, see \cite{ADKS,KSWY}.

\end{itemize}

Note, however, that even the very notion of a general polynomial sequence on an infinite dimensional space has never been discussed!

The classical umbral calculus on the real line gives a general theory of Sheffer sequences
 and related (umbral) operators. So our aim in this paper is to develop foundations of umbral calculus on the space $\mathcal D'$, which will eventually lead to a general theory of Sheffer sequences on $\mathcal D'$ and their umbral operators. In fact, at a structural level, many results of this paper will have remarkable similarities to the classical  setting of polynomials on $\R$. For example,
the form of the generating function of a Sheffer sequence on $\mathcal D'$ will be similar to the generating function of a Sheffer sequence on $\mathbb R$:  the constants appearing in the latter function  are replaced in the former function by appropriate linear continuous operators.

There is a principal point in our approach that we would like to stress. The paper \cite{DBLR} deals with polynomials on a general Hilbert space $\mathcal H$, while the monograph \cite{BK} develops Gaussian analysis on a general co-nuclear space $\Phi'$, without the assumption that $\Phi'$ consists of generalized functions on $\R^d$ (or on a general underlying space).  In fact, we will discuss in Remark \ref{Hermite} below that the case of the infinite dimensional Hermite polynomials is, in a sense,  exceptional and  does not require from the co-nuclear space $\Phi'$ any special structure. In all other cases, the choice $\Phi'=\mathcal D'$  is crucial.  Having said this, let us note that our ansatz can still be applied to a rather general co-nuclear space of generalized functions over a topological space $X$, equipped with a reference measure.

We also stress that the topological aspects of the spaces $\mathcal D$, $\mathcal D'$, and their symmetric tensor powers are crucial for us since all the linear operators  (appearing as `coefficients' in our theory) are continuous in the respective topologies. Furthermore, this assumption of continuity is principal and cannot be omitted.

The origins of the classical umbral calculus are in combinatorics. So, by analogy, one can think of umbral calculus on $\mathcal D'$ as a kind of  spatial combinatorics.
To give the reader a better feeling of this, let us consider the following example. Let $\gamma=\sum_{i=1}^\infty \delta_{x_i}\in\Gamma$ be a configuration. Here $\delta_{x_i}$ denotes the Dirac measure with mass at $x_i$. We will construct  (the kernel of) the falling factorial, denoted by $(\gamma)_n$, as a function from $\Gamma$ to ${\mathcal D'}^{\odot n}$. (Here and below $\odot$ denotes the symmetric tensor product.)  This will allow us to define `$\gamma$ {\it choose} $n$' by $\binom \gamma n:=\frac1{n!}(\gamma)_n$. And we will get the following explicit formula which supports this term:
\begin{equation}\label{uytfr675e}
\binom \gamma n=\sum_{\{i_1,\dots,i_n\}\subset\mathbb N} \delta_{x_{i_1}}\odot \delta_{x_{i_2}}\odot\dots\odot \delta_{x_{i_n}},\end{equation}
i.e., the sum is obtained by choosing all possible $n$-point subsets from the (locally finite) set $\{x_i\}_{i\in\mathbb N}$. The latter set can be obviously identified with the  configuration $\gamma$.

The paper is organized as follows. In Section~2 we discuss preliminaries. In particular, we recall the construction of a general Gel'fand triple $\Phi\subset\mathcal H_0\subset\Phi'$, where $\Phi$ is a nuclear space and $\Phi'$ is the dual of $\Phi$ (a co-nuclear space) with respect to the center Hilbert space $\mathcal H_0$.   We  consider the space $\mathcal P(\Phi')$ of polynomials on $\Phi'$ and equip it  with a nuclear space topology. Its dual space, denoted by $\mathcal F(\Phi')$,  has a natural (commutative) algebraic structure with respect to the symmetric tensor product.  We  also define a family of shift operators, $(E(\zeta))_{\zeta\in\Phi'}$, and the space of shift-invariant continuous linear operators on $\mathcal P(\Phi')$, denoted by $\mathbb S(\mathcal P(\Phi'))$.

In Section 3, we give the definitions of a
polynomial sequence on $\Phi'$,  a monic polynomial sequence on $\Phi'$, and  a monic polynomial sequence on $\Phi'$ of binomial type.

Starting from Section 4, we choose the Gel'fand triple as $\mathcal D\subset L^2(\R^d,dx)\subset\mathcal D'$.  The main result of this section is Theorem~\ref{Theorem1}, which gives 
  three equivalent conditions for a monic polynomial sequence to be of binomial type. The first equivalent condition is that  the corresponding lowering operators  are shift-invariant. The second condition   gives a representation of each lowering operator through directional derivatives in directions of delta functions, $\delta_x$ ($x\in\R^d$). The third condition gives the form of the generating function of a polynomial sequence of binomial type.

To prove Theorem~\ref{Theorem1}, we derive two essential results. The first one is an operator expansion theorem (Theorem~\ref{Corollary1}), which gives a description of any shift-invariant operator $T\in\mathbb S(\mathcal P(\mathcal D'))$ in terms of the lowering operators in directions $\delta_x$. The second result is an  isomorphism theorem (Theorem~\ref{Theorem2}): we construct a bijection $J$ between the spaces  $\mathbb S(\mathcal P(\mathcal D'))$ and $\mathcal F(\mathcal D')$ such that, under $J$, the product of any shift-invariant operators goes over into the symmetric tensor product of their images.   This implies, in particular, that any two shift-invariant operators commute.

Next, we define a family of delta operators on $\mathcal D'$ and prove that, for each such family, there exists a unique monic polynomial sequence of binomial type for which these delta operators are the lowering operators.

In Section 5, we identify a procedure of the lifting of a polynomial sequence of binomial type on $\R$ to  a polynomial sequence of binomial type on $\mathcal D'$. 
This becomes possible due to the structural similarities between the one-dimensional and infinite-dimensional theories.
Using this procedure, we identify, on $\mathcal D'$, the falling factorials, the rising factorials, the Abel polynomials, and the Laguerre polynomials of binomial type. We stress that the polynomial sequences lifted from $\R$ to $\mathcal D'$ form a subset of a (much larger) set of all polynomial sequences of binomial type on $\mathcal D'$.

In Section 6, we define a Sheffer sequence on $\mathcal D'$ as a monic polynomial sequence on $\mathcal D'$ whose lowering operators are delta operators. Thus, to every Sheffer sequence, there corresponds a (unique) polynomial sequence of binomial type. In particular, if the corresponding binomial sequence is just the set of  monomials (i.e., their delta operators are differential operators), we call such a Sheffer sequence an Appell sequence.
The  main result of this section,  Theorem~\ref{Theorem3}, gives several equivalent conditions for a monic polynomial sequence to be a Sheffer sequence. In particular,  we find the generating function of a Sheffer sequence on $\mathcal D'$.

In Section 7, we extend the procedure of the lifting described in Section 5 to Sheffer sequences. Thus, for each Sheffer sequence on $\R$, we define a Sheffer sequence on $\mathcal D'$. Using this procedure, we recover, in particular, the Hermite polynomials, the Charlier polynomials,  and the orthogonal Laguerre polynomials on $\mathcal D'$.

Finally, in Appendix, we discuss  several properties of formal  tensor power series. 

From the technical point of view, the similarities between the infinite dimensional and the classical settings open new perspectives in infinite dimensional analysis. Due to the special character of this approach, namely, through definition of umbral operators on $\mathcal P(\mathcal D')$ and umbral composition of polynomials on $\mathcal D'$, further developments and applications in infinite dimensional analysis are subject of forthcoming publications.

Let us also mention the open problem of (at least partial) characterization of Sheffer sequences that are orthogonal with respect to a certain probability measure on $\mathcal D'$. In the one-dimensional case, such a characterization is due to Meixner  \cite{Meixner}.  For multi-dimensional extensions of this result, see \cite{Casalis,Pommeret1,Pommeret2} and the references therein. 

\section{Preliminaries}\label{Section2}

\subsection{Nuclear and co-nuclear spaces}

Let us first recall the definition of a nuclear space, for details see e.g.\ \cite[Chapter 14, Section~2.2]{BSU2}. Consider a family of real separable Hilbert spaces $(\mathcal H_\tau)_{\tau\in T}$, where $T$ is an arbitrary indexing set. Assume that the set $\Phi:=\bigcap_{\tau\in T}\mathcal H_\tau$ is dense in each Hilbert space $\mathcal H_\tau$ and the family  $(\mathcal H_\tau)_{\tau\in T}$ is directed by embedding, i.e., for any $\tau_1,\tau_2\in T$ there exists a $\tau_3\in T$ such that $\mathcal H_{\tau_3}\subset \mathcal H_{\tau_1}$ and $\mathcal H_{\tau_3}\subset \mathcal H_{\tau_2}$ and both embeddings are continuous. We introduce in $\Phi$ the projective limit topology of the $\mathcal H_\tau$ spaces:
$$\Phi=\operatornamewithlimits{proj\,lim}_{\tau\in T}\mathcal H_\tau.$$
By definition, the sets $\{\varphi\in\Phi\mid \| \varphi-\psi\|_{\mathcal H_\tau}<\varepsilon\}$ with $\psi\in\Phi$, $\tau\in T$, and $\varepsilon>0$ form a system of base neighborhoods in this topology. Here $\|\cdot\|_{\mathcal H_\tau}$ denotes the norm in $\mathcal H_\tau$.

Assume that, for each $\tau_1\in T$, there exists a $\tau_2\in T$ such that $\mathcal H_{\tau_2}\subset \mathcal H_{\tau_1}$, and the operator of embedding of $\mathcal H_{\tau_2}$ into $\mathcal H_{\tau_1}$ is of the Hilbert--Schmidt class. Then the linear topological space $\Phi$ is called {\it nuclear}.

Next, let us assume that, for some $\tau_0\in T$, each Hilbert space $\mathcal H_\tau$ with $\tau\in T$ is continuously embedded into $\mathcal H_0:=\mathcal H_{\tau_0}$. We will call $\mathcal H_0$ the {\it center space}.

 Let $\Phi'$ denote the dual space of $\Phi$ with respect to the center space  $\mathcal H_0$, i.e., the dual pairing between $\Phi'$ and $\Phi$ is obtained by continuously extending the inner product in  $\mathcal H_0$, see  e.g.\ \cite[Chapter 14, Section~2.3]{BSU2}. The space $\Phi'$ is often called {\it co-nuclear}.

By the Schwartz theorem (e.g.\ \cite[Chapter 14, Theorem~2.1]{BSU2}), $\Phi'=\bigcup_{\tau\in T}\mathcal H_{-\tau}$, where $\mathcal H_{-\tau}$ denotes the dual space of $\mathcal H_\tau$ with respect to the center space $\mathcal H_0$. We endow $\Phi'$ with the Mackey topology---the strongest topology in $\Phi'$ consistent with the duality between $\Phi$ and $\Phi'$ (i.e., the set of continuous linear  functionals on $\Phi'$ coincides with $\Phi$). The Mackey topology in $\Phi'$ coincides with the topology of the inductive limit of the $\mathcal H_{-\tau}$ spaces, see e.g.\ \cite[Chapter IV, Proposition 4.4]{SW} or \cite[Chapter 1, Section 1]{BK}.
Thus, we obtain   the {\it Gel'fand triple} (also called the {\it standard triple})
\begin{equation}\label{fd6re7i}
\Phi=\operatornamewithlimits{proj\,lim}_{\tau\in T} \mathcal H_\tau\subset\mathcal  H_0\subset \operatornamewithlimits{ind\,lim}_{\tau\in T}\mathcal H_{-\tau}=\Phi'.\end{equation}

Let $X$ and $Y$ be linear topological spaces that are locally convex and Hausdorff. (Both $\Phi$ and $\Phi'$ are such spaces.)
We denote by $\mathcal L(X,Y)$ the space of continuous linear operators acting
from $X$ into $Y$. We will also denote $\mathcal L(X):=\mathcal L(X,X)$.  We denote by $X'$ and $Y'$ the dual space of $X$ and $Y$, respectively. We endow $X'$ with the Mackey topology with respect to the duality between $X$ and $X'$. We similarly endow $Y'$ with the Mackey topology.

Each operator $A\in\mathcal L(X,Y)$ has the {\it adjoint operator} $A^*\in\mathcal L(Y',X')$ (also called the transpose of $A$ or the dual of $A$), see e.g.\ \cite[Theorem 8.11.3]{NB}.

\begin{remark}\label{furt86o}
Note that, since we chose the Mackey topology on $\Phi'$, for an operator $A\in\mathcal L(\Phi')$, we have $A^*\in\mathcal L(\Phi)$. This fact will be used throughout the paper.

\end{remark}

\begin{proposition}\label{cyr768o65}
 Consider the Gel'fand triple \eqref{fd6re7i}. Let $A:\Phi\to\Phi$ and $B:\Phi'\to\Phi'$ be linear operators.

{\rm (i)} We have $A\in\mathcal L(\Phi)$ if and only if, for each $\tau_1\in T$, there exists a $\tau_2\in T$ such that the operator $A$ can be extended by continuity to an operator $\hat A\in\mathcal L(\mathcal H_{\tau_2},\mathcal H_{\tau_1})$.

{\rm (ii)} We have $B\in\mathcal L(\Phi')$ if and only if, for each $\tau_1\in T$, there exists a $\tau_2\in T$ such that
the operator $\hat B:=B\restriction \mathcal H_{-\tau_1}$
takes on values in $\mathcal H_{-\tau_2}$ and $\hat B\in\mathcal L(\mathcal H_{-\tau_1},\mathcal H_{-\tau_2})$.
\end{proposition}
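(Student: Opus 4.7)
The plan is to prove (i) directly from the definition of the projective-limit topology on $\Phi$ together with the density of $\Phi$ in each $\mathcal{H}_\tau$, and then to deduce (ii) from (i) by passing to adjoints, invoking Remark~\ref{furt86o} and the identification of the Mackey topology on $\Phi'$ with the inductive-limit topology recorded in \eqref{fd6re7i}.

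For part (i), the basic neighborhoods of zero in $\Phi$ have the form $\{\varphi\in\Phi : \|\varphi\|_{\mathcal{H}_\tau}<\varepsilon\}$, so by linearity the continuity of $A:\Phi\to\Phi$ is equivalent to the statement: for every $\tau_1\in T$ there exist $\tau_2\in T$ and $C>0$ with $\|A\varphi\|_{\mathcal{H}_{\tau_1}}\le C\|\varphi\|_{\mathcal{H}_{\tau_2}}$ for all $\varphi\in\Phi$. Since $\Phi$ is dense in $\mathcal{H}_{\tau_2}$ and $\mathcal{H}_{\tau_1}$ is complete, this bound is equivalent to the existence of a continuous extension $\hat A\in\mathcal{L}(\mathcal{H}_{\tau_2},\mathcal{H}_{\tau_1})$, which is exactly the claim.

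For part (ii), the ``if'' direction follows from the inductive-limit description of $\Phi'$. If, for each $\tau_1$, there is a $\tau_2$ with $B\restriction \mathcal{H}_{-\tau_1}\in\mathcal{L}(\mathcal{H}_{-\tau_1},\mathcal{H}_{-\tau_2})$, then composing with the continuous embedding $\mathcal{H}_{-\tau_2}\hookrightarrow\Phi'$ renders each restriction $B\restriction\mathcal{H}_{-\tau_1}\to\Phi'$ continuous, and the universal property of the inductive limit delivers $B\in\mathcal{L}(\Phi')$. For the ``only if'' direction I would pass to adjoints: by Remark~\ref{furt86o}, $B\in\mathcal{L}(\Phi')$ yields $B^*\in\mathcal{L}(\Phi)$, and applying part (i) to $B^*$ produces, for each $\tau_1\in T$, a $\tau_2\in T$ and a continuous extension $\widehat{B^*}\in\mathcal{L}(\mathcal{H}_{\tau_2},\mathcal{H}_{\tau_1})$. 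Its Hilbert-space adjoint $(\widehat{B^*})^*$ belongs to $\mathcal{L}(\mathcal{H}_{-\tau_1},\mathcal{H}_{-\tau_2})$, and it coincides with $B\restriction\mathcal{H}_{-\tau_1}$: for $\xi\in\mathcal{H}_{-\tau_1}$ and $\varphi\in\Phi$ the identity $\langle B\xi,\varphi\rangle=\langle\xi,B^*\varphi\rangle$ extends by continuity in $\varphi$ to all of $\mathcal{H}_{\tau_2}$, giving $B\xi=(\widehat{B^*})^*\xi\in\mathcal{H}_{-\tau_2}$.

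The only subtle point I anticipate is the bookkeeping involved in matching $(\widehat{B^*})^*$ with the restriction $B\restriction\mathcal{H}_{-\tau_1}$; this rests on the fact that every dual pairing between the scales $\mathcal{H}_{\pm\tau}$ arises by continuous extension of the single inner product on $\mathcal{H}_0$, so that the Hilbert-space adjoint of $\widehat{B^*}$ and the $\Phi$--$\Phi'$ adjoint of $B$ are given by the same bilinear form. Once this compatibility is noted, parts (i) and (ii) are essentially translations of the respective projective- and inductive-limit definitions of continuity.
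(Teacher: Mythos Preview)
Your proof is correct and follows essentially the same route as the paper: part (i) is argued via the base neighborhoods of the projective-limit topology, and the ``only if'' half of (ii) is obtained by passing to $B^*\in\mathcal L(\Phi)$ and then taking the Hilbert-space adjoint of the extension furnished by (i). The only difference is in the ``if'' half of (ii): you invoke the universal property of the inductive limit directly, whereas the paper again passes to adjoints (restricting $(\hat B)^*$ to $\Phi$, checking this restriction is independent of the choice of $\tau_1,\tau_2$, and applying (i) to conclude $B=A^*\in\mathcal L(\Phi')$). Both arguments are valid; yours is slightly more direct for that implication, while the paper's adjoint route keeps the two halves of (ii) symmetric and avoids explicit appeal to the inductive-limit universal property.
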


\begin{remark}\label{tfur7}
Proposition~\ref{cyr768o65} admits a sraightforward generalization to the case of two Gel'fand triples, $\Phi\subset \mathcal H_0\subset \Phi'$ and $\Psi\subset \mathcal G_0\subset\Psi'$, and linear operators $A:\Phi\to\Psi$ and $B:\Phi'\to\Psi'$.
\end{remark}

\begin{remark}
Part (ii) of Proposition~\ref{cyr768o65}  is related to the universal property of an inductive limit, which states that any linear operator from an inductive limit of a family of locally convex  spaces to another locally convex space is continuous if and only if the restriction of the operator to any member of the family is continuous, see e.g.\ \cite[II.29]{Bourbaki}.
\end{remark}

\begin{proof}[Proof of Proposition~\ref{cyr768o65}]
(i) By the definition of the topology in $\Phi$, the linear operator $A:\Phi\to\Phi$ is continuous if and only if, for any $\tau_1\in T$ and $\varepsilon_1>0$, there exist $\tau_2\in T$ and $\varepsilon_2>0$ such that the pre-image of the set
$$\{\varphi\in\Phi\mid \|\varphi\|_{\mathcal H_{\tau_1}}<\varepsilon_1\}$$ contains the set
$$\{\varphi\in\Phi\mid \|\varphi\|_{\mathcal H_{\tau_2}}<\varepsilon_2\}.$$
But this implies the statement.

(ii) Assume $B\in\mathcal L(\Phi')$. Then, by Remark \ref{furt86o}, we have $B^*\in\mathcal L(\Phi)$. Hence,
for each $\tau_1\in T$, there exists a $\tau_2\in T$ such that the operator $B^*$ can be extended by continuity to an operator $\hat B^*\in\mathcal L(\mathcal H_{\tau_2},\mathcal H_{\tau_1})$. But the adjoint of the operator  $\hat B^*$ is $\hat B:=B\restriction\mathcal H_{-\tau_1}$. Hence $\hat B\in\mathcal L(\mathcal H_{-\tau_1},\mathcal H_{-\tau_2})$.

Conversely, assume that, for each $\tau_1\in T$, there exists a $\tau_2\in T$ such that
the operator $\hat B:=B\restriction \mathcal H_{-\tau_1}$
takes on values in $\mathcal H_{-\tau_2}$ and $\hat B\in\mathcal L(\mathcal H_{-\tau_1},\mathcal H_{-\tau_2})$.
Therefore, $\hat B^*\in\mathcal L(\mathcal H_{\tau_2},\mathcal H_{\tau_1})$. Denote $A:=\hat B^*\restriction \Phi$. As easily seen, the definition of the operator $A$ does not depend on the choice of $\tau_1,\tau_2\in T$. Hence, $A:\Phi\to\Phi$, and by part (i) we conclude that $A\in\mathcal L(\Phi)$. But $B=A^*$ and hence $B\in\mathcal L(\Phi')$.
\end{proof}

In what follows, $\otimes$ will denote the tensor product. In particular, for a real separable Hilbert space $\mathcal H$, $\mathcal H^{\otimes n}$ denotes the $n$th tensor power of $\mathcal H$. We will denote by $\operatorname{Sym}_n\in\mathcal L(\mathcal H^{\otimes n})$ the symmetrization operator, i.e., the orthogonal projection satisfying
\begin{equation}\label{cyrd6ue7r}
\operatorname{Sym}_n f_1\otimes f_2\otimes\dots\otimes f_n=\frac1{n!}\sum_{\sigma\in\mathfrak S(n)}f_{\sigma(1)}\otimes f_{\sigma(2)}\otimes\dots\otimes f_{\sigma(n)} \end{equation}
for $f_1,f_2,\dots,f_n\in\mathcal H $.
 Here $\mathfrak S(n)$
denotes the symmetric group acting on $\{1,\dots,n\}$.
We will denote the symmetric tensor product by $\odot$. In particular,
$$f_1\odot f_2\odot\dots\odot f_n:= \operatorname{Sym}_n f_1\otimes f_2\otimes\dots\otimes f_n,\quad f_1,f_2,\dots,f_n\in\mathcal H,$$
and $\mathcal H^{\odot n}:=\operatorname{Sym}_n \mathcal H^{\otimes n}$ is the $n$th symmetric tensor power of $\mathcal H$. Note that, for each $f\in\mathcal H$, we have $f^{\odot n}=f^{\otimes n}$.

Starting with Gel'fand triple \eqref{fd6re7i}, one constructs its $n$th symmetric tensor power as follows:
$$\Phi^{\odot n}:=\operatornamewithlimits{proj\,lim}_{\tau\in T}\mathcal H_\tau^{\odot n}\subset \mathcal H_0^{\odot n}\subset \operatornamewithlimits{ind\,lim}_{\tau\in T}\mathcal H_{-\tau}^{\odot n}=:\Phi'{}^{\odot n},$$
see e.g.\ \cite[Section~2.1]{BK} for details. In particular, $\Phi^{\odot n}$ is a nuclear space and $\Phi'{}^{\odot n}$ is its dual with respect to the center space $\mathcal H_0^{\odot n}$.
We will also denote $\Phi^{\odot 0}=\mathcal H_0^{\odot 0}=\Phi'{}^{\odot 0}:=\R$. The dual pairing between $F^{(n)}\in \Phi'{}^{\odot n}$ and $g^{(n)}\in\Phi^{\odot n}$ will be denoted by  $\langle F^{(n)},g^{(n)}\rangle$.

\begin{remark}\label{t5768909} Consider the set $\{\xi^{\otimes n}\mid \xi\in\Phi\}$. By the polarization identity, the linear span of this set is dense in every space $\mathcal H_\tau^{\odot n}$, $\tau\in T$.
\end{remark}

The following lemma will be very important for our considerations.

\begin{lemma}\label{yde6ue6cb}
 {\rm (i)} Let $F^{(n)},G^{(n)}\in\Phi'{}^{\odot n}$ be such that
$$\langle F^{(n)},\xi^{\otimes n}\rangle=\langle G^{(n)},\xi^{\otimes n}\rangle\quad\text{for all }\xi\in\Phi,$$
then $F^{(n)}=G^{(n)}$.

{\rm (ii)} Let $\Phi$ and $\Psi$ be nuclear spaces and let $A,B\in\mathcal L(\Phi^{\odot n},\Psi)$. Assume that
$$A\xi^{\otimes n}=B\xi^{\otimes n}\quad \text{for all }\xi\in\Phi.$$ Then $A=B$.
\end{lemma}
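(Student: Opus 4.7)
The plan is to deduce both parts from the density statement recorded in Remark \ref{t5768909}, with part (ii) essentially reducing to part (i) via duality.

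For part (i), I would set $H^{(n)} := F^{(n)} - G^{(n)} \in \Phi'{}^{\odot n}$. By linearity of the dual pairing the hypothesis gives $\langle H^{(n)},\xi^{\otimes n}\rangle = 0$ for every $\xi\in\Phi$. Since $\Phi'{}^{\odot n} = \bigcup_{\tau\in T}\mathcal H_{-\tau}^{\odot n}$, there exists $\tau\in T$ with $H^{(n)}\in\mathcal H_{-\tau}^{\odot n}$. Then $\langle H^{(n)},\cdot\rangle$ extends by continuity to a continuous linear functional on $\mathcal H_\tau^{\odot n}$ (using the duality between $\mathcal H_\tau^{\odot n}$ and $\mathcal H_{-\tau}^{\odot n}$ with respect to the center space $\mathcal H_0^{\odot n}$). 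By Remark \ref{t5768909}, the linear span of $\{\xi^{\otimes n}\mid\xi\in\Phi\}$ is dense in $\mathcal H_\tau^{\odot n}$. A continuous linear functional that vanishes on a dense subspace is identically zero, so $H^{(n)} = 0$ in $\mathcal H_{-\tau}^{\odot n}$, hence in $\Phi'{}^{\odot n}$.

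For part (ii), I would reduce to part (i) rather than attempt a direct density argument in $\Phi^{\odot n}$ (whose projective-limit topology makes naive density reasoning awkward). Fix an arbitrary continuous linear functional $\psi'\in\Psi'$ and define
\begin{equation*}
L_{\psi'}(\varphi) := \langle \psi', (A-B)\varphi\rangle, \qquad \varphi\in\Phi^{\odot n}.
\end{equation*}
Since $A-B\in\mathcal L(\Phi^{\odot n},\Psi)$ and $\psi'$ is continuous on $\Psi$, the map $L_{\psi'}$ is a continuous linear functional on $\Phi^{\odot n}$, i.e., $L_{\psi'}\in\Phi'{}^{\odot n}$. By hypothesis $L_{\psi'}(\xi^{\otimes n}) = 0$ for every $\xi\in\Phi$, so part (i) (with $G^{(n)}=0$) yields $L_{\psi'}=0$. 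Since $\Psi$ is locally convex and Hausdorff, $\Psi'$ separates points of $\Psi$, and hence $(A-B)\varphi = 0$ for every $\varphi\in\Phi^{\odot n}$, proving $A=B$.

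The only mildly delicate point is identifying $L_{\psi'}$ with an element of $\Phi'{}^{\odot n}$ so that part (i) can be invoked; this is immediate from the definition of $\Phi'{}^{\odot n}$ as the topological dual of $\Phi^{\odot n}$. Everything else is a routine application of Remark \ref{t5768909} and Hahn--Banach-type separation, so I do not expect any real obstacle.
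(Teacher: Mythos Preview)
Your proof is correct. Part (i) matches the paper's argument exactly: both use Remark~\ref{t5768909} to conclude that a continuous functional vanishing on the dense span of the $\xi^{\otimes n}$ must be zero.

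For part (ii) you take a genuinely different path. The paper invokes Proposition~\ref{cyr768o65}\,(i) and Remark~\ref{tfur7}: since $A-B\in\mathcal L(\Phi^{\odot n},\Psi)$, for each target index one can extend $A-B$ to a bounded operator between Hilbert spaces $\mathcal H_{\tau_2}^{\odot n}\to\mathcal G_{\tau_1}$, and then Remark~\ref{t5768909} gives density directly in the Hilbert-space domain. You instead compose $A-B$ with an arbitrary $\psi'\in\Psi'$ to obtain a functional in $\Phi'{}^{\odot n}$ and then quote part (i), finishing with Hahn--Banach separation. Your route is slightly more elementary in that it avoids the operator-extension characterization and reduces everything to the scalar case already proved; the paper's route avoids invoking separation in $\Psi$ but needs the Gel'fand-triple description of continuity. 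Both are short and equally valid.
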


\begin{proof}
Statement (i) follows from  Remark \ref{t5768909}, statement (ii) follows from Proposition~\ref{cyr768o65}, (i) and Remarks \ref{tfur7} and \ref{t5768909}.
\end{proof}

\subsection{Polynomials on a co-nuclear space}

Below we fix the Gel'fand triple \eqref{fd6re7i}.

\begin{definition}
A function $P:\Phi'\to\R$ is called a {\it polynomial on $\Phi'$} if
\begin{equation}\label{dtdsr6e}
P(\omega)=\sum_{k=0}^n\langle\omega^{\otimes k},f^{(k)}\rangle,\quad \omega\in\Phi',\end{equation}
where $f^{(k)}\in\Phi^{\odot k}$, $k=0,1,\dots,n$, $n\in\mathbb N_0:=\{0,1,2,\dots\}$, and $\omega^{\otimes 0}:=1$. If $f^{(n)}\ne 0$, one says that the {\it polynomial $P$ is of degree $n$}.
We denote by $\mathcal P(\Phi')$ the set of all polynomials on $\Phi'$.
\end{definition}

\begin{remark}
For each $P\in\mathcal P(\Phi')$, its representation in  form \eqref{dtdsr6e} is evidently unique.
\end{remark}

For any $f^{(k)}\in\Phi^{\odot k}$ and $g^{(n)}\in\Phi^{\odot n}$, $k,n\in\mathbb N_0$, we have
\begin{equation}\label{897y6t}
\langle\omega^{\otimes k},f^{(k)}\rangle \langle\omega^{\otimes n},g^{(n)}\rangle=\langle \omega^{\otimes (k+n)},f^{(k)}\odot g^{(n)}\rangle,\quad \omega\in\Phi'.\end{equation}
Hence $\mathcal P(\Phi')$ is an algebra under point-wise multiplication of polynomials on $\Phi'$.

We will now define a topology on $\mathcal P(\Phi')$. Let $\mathcal{F}_{\mathrm{fin}}(\Phi)$ denote the topological direct sum of the nuclear spaces $\Phi^{\odot n}$, $n\in\mathbb N_0$. Hence, $\mathcal{F}_{\mathrm{fin}}(\Phi)$ is a nuclear space, see e.g.\ \cite[Section~5.1]{BK}. This space consists of all finite sequences $f=(f^{(0)},f^{(1)},\dots,f^{(n)},0,0,\dots)$, where $f^{(k)}\in\Phi^{\odot k}$, $k=0,1,\dots,n$, $n\in\mathbb N_0$. The convergence in $\mathcal{F}_{\mathrm{fin}}(\Phi)$ means the uniform finiteness of non-zero elements and the coordinate-wise convergence in each $\Phi^{\odot k}$.

\begin{remark}Below we will often identify $f^{(n)}\in\Phi^{\odot n}$ with
$$(0,\dots,0,f^{(n)},0,0,\dots)\in\mathcal{F}_{\mathrm{fin}}(\Phi).$$
\end{remark}

We define a natural bijective mapping
$I:\mathcal{F}_{\mathrm{fin}}(\Phi)\to\mathcal{P}(\Phi')$
by
\begin{equation}
(If)(\omega):=\sum_{k=0}^n \langle\omega^{\otimes k},f^{(k)}\rangle,\label{Eq5}
\end{equation}
for $f=(f^{(0)},f^{(1)},\dots,f^{(n)},0,0,\dots)\in \mathcal{F}_{\mathrm{fin}}(\Phi)$. We  define a nuclear space topology on  $\mathcal{P}(\Phi')$ as the image of the topology on  $\mathcal{F}_{\mathrm{fin}}(\Phi)$ under the mapping $I$.

The  space $\mathcal F_{\mathrm{fin}}(\Phi)$ may be endowed with the structure of an algebra with respect to the symmetric tensor product
\begin{equation}\label{prodonFfin}
            f\odot g =\left( \sum_{i=0}^k f^{(i)}\odot g^{(k-i)}\right)_{k=0}^\infty,
          \end{equation}
          where $f=(f^{(k)})_{k=0}^\infty, \,
          g=(g^{(k)})_{k=0}^\infty\in\mathcal F_{\mathrm{fin}}(\Phi)$.
           The unit element of this (commutative) algebra is the {\em vacuum vector} $\Omega:=(1,0,0\ldots)$.

          By \eqref{897y6t}--\eqref{prodonFfin},
          the bijective mapping $I$  provides an isomorphism between the algebras $\mathcal{F}_{\mathrm{fin}}(\Phi)$ and $\mathcal P(\Phi')$, namely, for any $f,g\in\mathcal{F}_{\mathrm{fin}}(\Phi)$,
          $$
            \bigl(I(f\odot g)\bigr)(\omega)=(If)(\omega) (Ig)(\omega), \quad \omega\in\Phi'.
          $$

Let
$$\mathcal F(\Phi'):=\prod_{k=0}^\infty \Phi'{}^{\odot k}$$
denote the topological product of the spaces $\Phi'{}^{\odot k}$. The space $\mathcal F(\Phi')$ consists of all sequences $F=(F^{(k)})_{k=0}^\infty$, where $F^{(k)}\in \Phi'{}^{\odot k}$, $k\in\mathbb N_0$. Note that the convergence in this space means the coordinate-wise convergence in each space $ \Phi'{}^{\odot k}$.

Each element $F=(F^{(k)})_{k=0}^\infty\in \mathcal F(\Phi')$ determines a continuous linear functional on $\mathcal{F}_{\mathrm{fin}}(\Phi)$ by
\begin{equation}\label{gfyr7r}
\langle F,f\rangle:=\sum_{k=0}^\infty \langle F^{(k)},f^{(k)}\rangle,\quad f=(f^{(k)})_{k=0}^\infty\in \mathcal{F}_{\mathrm{fin}}(\Phi)\end{equation}
(note that the sum in \eqref{gfyr7r} is, in fact, finite). The dual of $\mathcal{F}_{\mathrm{fin}}(\Phi)$ is equal to $\mathcal F(\Phi')$, and  the topology on $\mathcal F(\Phi')$ coincides with the Mackey topology on $\mathcal F(\Phi')$ that is consistent with the duality between $\mathcal{F}_{\mathrm{fin}}(\Phi)$ and $\mathcal F(\Phi')$, see e.g.\ \cite{BK}. In view of the definition of the topology on $\mathcal P(\Phi')$, we may also think of $\mathcal F(\Phi')$ as the dual space of $\mathcal P(\Phi')$.

Similarly to \eqref{prodonFfin}, one can introduce the
symmetric tensor   product on $\mathcal F(\Phi')$:
       \begin{equation}\label{prodonFDprime}
            F\odot G =\left( \sum_{i=0}^k F^{(i)}\odot G^{(k-i)}\right)_{k=0}^\infty,
          \end{equation}
          where $F=(F^{(k)})_{k=0}^\infty,\,
          G=(G^{(k)})_{k=0}^\infty\in\mathcal F(\Phi')$. The unit element of this algebra is again $\Omega=(1,0,0,\ldots)$.

We will now discuss another realization of the space   $\mathcal F(\Phi')$.
We denote by $\mathcal S(\R,\R)$ the vector space of formal series $R(t)=\sum_{n=0}^\infty r_nt^n$ in powers of $t\in \R$, where $r_n\in\R$ for $n\in\N_0$. The $\mathcal S(\R,\R)$ is an algebra under the product of formal power series. Similarly to $\mathcal S(\R,\R)$,  we give the following

\begin{definition}\label{es5w46u3} Each $(F^{(n)})_{n=0}^\infty\in\mathcal F(\Phi')$ identifies  a {\it `real-valued' formal series\linebreak $\sum_{n=0}^\infty \langle F^{(n)},\xi^{\otimes n}\rangle$ in tensor powers of $\xi\in\Phi$}.
We denote by $\mathcal S(\Phi,\R)$ the vector space of such formal series with natural operations. We define a product on $\mathcal S(\Phi,\R)$ by
\begin{equation}\label{vut9p}
\left(\sum_{n=0}^\infty \langle F^{(n)},\xi^{\otimes n}\rangle\right)\left(\sum_{n=0}^\infty \langle G^{(n)},\xi^{\otimes n}\rangle\right)=\sum_{n=0}^\infty\left\langle \sum_{i=0}^n F^{(i)}\odot G^{(n-i)},\xi^{\otimes n}\right\rangle,
\end{equation}
where $(F^{(n)})_{n=0}^\infty,(G^{(n)})_{n=0}^\infty\in\mathcal F(\Phi')$.
 \end{definition}

\begin{remark}
Assume that, for some  $(F^{(n)})_{n=0}^\infty,(G^{(n)})_{n=0}^\infty\in\mathcal F(\Phi')$ and $\xi\in\Phi$,  both series $\sum_{n=0}^\infty \langle F^{(n)},\xi^{\otimes n}\rangle$ and $\sum_{n=0}^\infty \langle G^{(n)},\xi^{\otimes n}\rangle$ converge absolutely. Then also the series on the right hand side of \eqref{vut9p} converges absolutely and \eqref{vut9p} holds as an equality of two real numbers.
\end{remark}

\begin{remark}\label{d6e4i7}
Let $t\in\R$ and $\xi\in\Phi$. Then, $t\xi\in\Phi$ and for $(F^{(n)})_{n=0}^\infty\in\mathcal F(\Phi')$,
 \begin{equation}\label{cte7869}
 \sum_{n=0}^\infty \langle F^{(n)},(t\xi)^{\otimes n}\rangle=
\sum_{n=0}^\infty t^n\langle F^{(n)},\xi^{\otimes n}\rangle,
 \end{equation}
 the expression on the right hand side of equality \eqref{cte7869}
 being the formal power series in $t$ that has coefficient $\langle F^{(n)},\xi^{\otimes n}\rangle$ by $t^n$.
\end{remark}

According to the definition of $\mathcal S(\Phi,\R)$, there exists a natural bijective mapping\linebreak $\mathcal I:\mathcal F(\Phi')\to\mathcal S(\Phi,\R) $ given by
\begin{equation}\label{ukt896p078}
(\mathcal IF)(\xi):=\sum_{n=0}^\infty \langle F^{(n)},\xi^{\otimes n}\rangle,\quad F=(F^{(n)})_{n=0}^\infty\in\mathcal F(\Phi'),\quad \xi\in\Phi.\end{equation}
The mapping $\mathcal I$ provides an isomorphism between the algebras $\mathcal F(\Phi')$ and $\mathcal S(\Phi,\R)$, namely,
for any $F,G\in \mathcal F(\Phi')$,
\begin{equation}\label{ukt896p0781}
\big(\mathcal I(F\odot G)\big)(\xi)=(\mathcal IF)(\xi)(\mathcal IG)(\xi)\end{equation}
see \eqref{prodonFDprime} and \eqref{vut9p}.

\begin{remark}
In view of the isomorphism $\mathcal I$, we may think of $\mathcal S(\Phi,\R)$ as the dual space of $\mathcal P(\Phi')$.
\end{remark}

Analogously to Definition \ref{es5w46u3} and Remark \ref{d6e4i7}, we can introduce a space of $\Phi$-valued tensor power series.

\begin{definition}\label{yr8o}
Let $( A_n)_{n=1}^\infty$ be a sequence of operators $A_n\in\mathcal L(\Phi^{\odot n},\Phi)$. Then the operators $( A_n)_{n=1}^\infty$ can be identified with a {\it `$\Phi$-valued' formal series $\sum_{n=1}^\infty  A_n \xi^{\otimes n}$ in tensor powers of $\xi\in\Phi$}. We denote by $\mathcal S(\Phi,\Phi)$ the vector space of such formal series.
\end{definition}

\begin{remark}
Let $t\in\R$ and $\xi\in\Phi$. Then, $t\xi\in\Phi$ and for
a sequence $( A_n)_{n=1}^\infty$ as in Definition \ref{yr8o}
 $$\sum_{n=1}^\infty  A_n(t\xi)^{\otimes n}=\sum_{n=1}^\infty t^n A_n\xi^{\otimes n}
 $$
 is the formal power series in $t$ that has  coefficient $A_n\xi^{\otimes n}\in\Phi$ by $t^n$.  Recall that, by Lemma~\ref
{yde6ue6cb}, (ii), the values of the operator $A_n$ on the vectors $\xi^{\otimes n}\in\Phi^{\odot n}$ uniquely identify the operator $A_n$.\end{remark}

In Appendix, we discuss several properties of formal  tensor power series.

\subsection{Shift-invariant operators}

\begin{definition}
For each  $\zeta\in\Phi '$, we define the {\it operator $D(\zeta):\mathcal P(\Phi ')\to\mathcal P(\Phi ')$ of differentiation in direction~$\zeta$} by
\[
(D(\zeta)P)(\omega):=\lim_{t\to 0}\frac{P(\omega+t\zeta)-P(\omega)}{t},\quad
P\in\mathcal{P}(\Phi '),\ \omega\in\Phi '.
\]
\end{definition}

\begin{definition}\label{tee665i4ei}
For each
$\zeta\in\Phi '$,  we define the {\it annihilation operator} $\mathfrak A(\zeta)\in\mathcal{L}(\mathcal{F}_{\mathrm{fin}}(\Phi ))$  by
$$
\mathfrak A(\zeta)\Omega:=0,\quad \mathfrak A(\zeta)\xi^{\otimes n}:=n
\langle\zeta,\xi\rangle \xi^{\otimes(n-1)}\quad\text{for }\xi\in\Phi,\ n\in\N.
$$
\end{definition}

\begin{lemma}\label{yufr7r8865}
 For each $\zeta\in\Phi '$, we have $D(\zeta)\in \mathcal{L}(\mathcal{P}(\Phi '))$.
\end{lemma}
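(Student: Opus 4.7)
The plan is to transport the problem through the algebraic isomorphism $I:\mathcal F_{\mathrm{fin}}(\Phi)\to\mathcal P(\Phi')$ by explicitly identifying $D(\zeta)$ with an annihilation-type operator on $\mathcal F_{\mathrm{fin}}(\Phi)$. First, by linearity it suffices to compute $D(\zeta)$ on a single monomial $P(\omega)=\langle\omega^{\otimes k},f^{(k)}\rangle$ with $f^{(k)}\in\Phi^{\odot k}$. Expanding
\[
\langle(\omega+t\zeta)^{\otimes k},f^{(k)}\rangle=\sum_{j=0}^{k}\binom{k}{j}t^{j}\langle\zeta^{\otimes j}\otimes\omega^{\otimes(k-j)},f^{(k)}\rangle,
\]
where the symmetry of $f^{(k)}$ lets us drop the symmetrization, the limit defining $(D(\zeta)P)(\omega)$ is simply the coefficient of $t$, which equals $k\langle\zeta\otimes\omega^{\otimes(k-1)},f^{(k)}\rangle$.

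Next I would introduce the partial contraction $g^{(k-1)}:=k\langle\zeta,f^{(k)}\rangle_{1}$, defined so that $\langle\omega^{\otimes(k-1)},g^{(k-1)}\rangle$ equals the expression above. The step that requires care is verifying that $g^{(k-1)}$ actually belongs to $\Phi^{\odot(k-1)}$ rather than merely to $\Phi'{}^{\odot(k-1)}$. Pick $\tau_{0}\in T$ with $\zeta\in\mathcal H_{-\tau_{0}}$. For any $\tau\in T$ with $\mathcal H_{\tau}\subset\mathcal H_{\tau_{0}}$, partial contraction by $\zeta$ is a bounded operator from $\mathcal H_{\tau}^{\odot k}$ to $\mathcal H_{\tau}^{\odot(k-1)}$. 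Since $f^{(k)}\in\Phi^{\odot k}=\bigcap_{\tau}\mathcal H_{\tau}^{\odot k}$, the contracted element lies in every $\mathcal H_{\tau}^{\odot(k-1)}$, hence in $\Phi^{\odot(k-1)}$. Thus $D(\zeta)P\in\mathcal P(\Phi')$, and the preceding analysis identifies the operator $\mathfrak B(\zeta):=I^{-1}D(\zeta)I$ on $\mathcal F_{\mathrm{fin}}(\Phi)$ as acting on $\Phi^{\odot k}$ by $f^{(k)}\mapsto k\langle\zeta,f^{(k)}\rangle_{1}$; by Lemma~\ref{yde6ue6cb}(ii) applied to its action on vectors $\xi^{\otimes k}$, this coincides with the annihilation operator $\mathfrak A(\zeta)$ of Definition~\ref{tee665i4ei}.

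Finally, for continuity, recall that the topology on $\mathcal P(\Phi')$ is by definition the image under $I$ of the topology of the topological direct sum $\mathcal F_{\mathrm{fin}}(\Phi)=\bigoplus_{n\in\N_{0}}\Phi^{\odot n}$. It therefore suffices to show that $\mathfrak A(\zeta)\in\mathcal L(\mathcal F_{\mathrm{fin}}(\Phi))$, and by the universal property of direct sums it suffices that its restriction to each $\Phi^{\odot k}$ be continuous into $\Phi^{\odot(k-1)}\subset\mathcal F_{\mathrm{fin}}(\Phi)$. This continuity follows from Proposition~\ref{cyr768o65}(i): the estimates in the previous paragraph show that for every target Hilbert norm $\|\cdot\|_{\mathcal H_{\tau_{1}}^{\odot(k-1)}}$ one can pick $\tau_{2}$ so that partial contraction by $\zeta$ is continuous $\mathcal H_{\tau_{2}}^{\odot k}\to\mathcal H_{\tau_{1}}^{\odot(k-1)}$, which is exactly the criterion for $\mathfrak A(\zeta)\restriction\Phi^{\odot k}\in\mathcal L(\Phi^{\odot k},\Phi^{\odot(k-1)})$. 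The main obstacle is this last verification that the contraction preserves the nuclear space $\Phi$, i.e., stays inside $\bigcap_{\tau}\mathcal H_{\tau}^{\odot(k-1)}$ with continuous dependence; once this is secured, both the polynomial-valuedness of $D(\zeta)P$ and the continuity of $D(\zeta)$ follow at once.
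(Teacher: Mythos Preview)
Your proof is correct and follows essentially the same route as the paper: the paper simply asserts $D(\zeta)=I\,\mathfrak A(\zeta)\,I^{-1}$ and invokes the fact (stated without proof in Definition~\ref{tee665i4ei}) that $\mathfrak A(\zeta)\in\mathcal L(\mathcal F_{\mathrm{fin}}(\Phi))$. You have in effect unpacked both of those claims---the identification via the binomial expansion and the continuity of $\mathfrak A(\zeta)$ via partial contraction bounds and Proposition~\ref{cyr768o65}(i)---so your argument is a more detailed version of the paper's two-line proof rather than a different approach.
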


\begin{proof}
Using the bijection $I:\mathcal{F}_{\mathrm{fin}}(\Phi )\to\mathcal{P}(\Phi ')$ defined by \eqref{Eq5},
we easily obtain
$$D(\zeta)=I\mathfrak A(\zeta)I^{-1},\quad \zeta\in\Phi ',$$
which implies the statement.
\end{proof}

\begin{definition}
For each $\zeta\in\Phi '$, we define the {\it operator $E(\zeta):\mathcal P(\Phi ')\to\mathcal P(\Phi ')$ of shift by $\zeta$} by
\[
(E(\zeta)P)(\omega):=P(\omega+\zeta),\quad P\in\mathcal{P}(\Phi '),\
\omega\in\Phi '.
\]
\end{definition}

\begin{lemma}(Boole's formula)\label{Lemma4}
For each $\zeta\in\Phi '$,
\[
E(\zeta)=\sum_{k=0}^\infty\frac{1}{k!}D(\zeta)^k.
\]
\end{lemma}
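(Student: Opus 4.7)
The plan is to prove the identity by reducing it to a single-variable binomial expansion, using the homogeneous decomposition of polynomials and the density result in Lemma~\ref{yde6ue6cb}.

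First I would check that the right-hand side actually defines an element of $\mathcal L(\mathcal P(\Phi'))$. Writing $P(\omega)=\sum_{k=0}^n\langle\omega^{\otimes k},f^{(k)}\rangle$, the operator $D(\zeta)$ lowers the degree of a polynomial by one; indeed, via the isomorphism $I$ of Lemma~\ref{yufr7r8865}, $D(\zeta)$ corresponds to the annihilation operator $\mathfrak A(\zeta)$, which sends $\Phi^{\odot k}$ into $\Phi^{\odot(k-1)}$. Consequently $D(\zeta)^k P=0$ for $k>\deg P$, so the series on the right is in fact a finite sum for every fixed polynomial and defines a continuous linear operator on $\mathcal P(\Phi')$.

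Next, by linearity it suffices to verify the identity on homogeneous polynomials of each fixed degree $n$, i.e.\ on $P_{f^{(n)}}(\omega):=\langle\omega^{\otimes n},f^{(n)}\rangle$ with $f^{(n)}\in\Phi^{\odot n}$. The maps
\[
f^{(n)}\longmapsto E(\zeta)P_{f^{(n)}},\qquad f^{(n)}\longmapsto\sum_{k=0}^\infty \tfrac{1}{k!}D(\zeta)^k P_{f^{(n)}}
\]
are continuous linear operators from $\Phi^{\odot n}$ into $\mathcal P(\Phi')$, so by Lemma~\ref{yde6ue6cb}\,(ii) it is enough to prove equality on the vectors $f^{(n)}=\xi^{\otimes n}$, $\xi\in\Phi$, i.e.\ on the monomials $P(\omega)=\langle\omega,\xi\rangle^n$.

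For such a monomial, the left-hand side is
\[
(E(\zeta)P)(\omega)=\langle\omega+\zeta,\xi\rangle^{n}=\bigl(\langle\omega,\xi\rangle+\langle\zeta,\xi\rangle\bigr)^n,
\]
while a direct computation of the directional derivative gives
\[
D(\zeta)^k P(\omega)=\frac{n!}{(n-k)!}\,\langle\zeta,\xi\rangle^k\langle\omega,\xi\rangle^{n-k}\qquad(0\le k\le n),
\]
and $0$ for $k>n$. Dividing by $k!$ and summing over $k$ yields $\sum_{k=0}^n\binom{n}{k}\langle\zeta,\xi\rangle^k\langle\omega,\xi\rangle^{n-k}$, which by the classical binomial theorem equals $(\langle\omega,\xi\rangle+\langle\zeta,\xi\rangle)^n$. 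Both sides agree, completing the proof. The only genuinely delicate step is the reduction to the vectors $\xi^{\otimes n}$, which is precisely what Lemma~\ref{yde6ue6cb}\,(ii) is designed to handle; everything else is either formal or reduces to the one-dimensional binomial formula.
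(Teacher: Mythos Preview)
Your proof is correct and follows essentially the same approach as the paper: reduce to monomials $\langle\omega,\xi\rangle^n=\langle\omega^{\otimes n},\xi^{\otimes n}\rangle$ and apply the one-dimensional binomial expansion. You are slightly more explicit than the paper in invoking Lemma~\ref{yde6ue6cb}\,(ii) to justify that checking on the vectors $\xi^{\otimes n}$ suffices, whereas the paper simply concludes with ``which implies the statement.''
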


\begin{proof} Note that the infinite sum is, in fact, a finite sum
when applied to a polynomial, and thus it is a well-defined operator
on $\mathcal{P}(\Phi ')$. For each $\xi\in\Phi $ and $n\in\N$,
\begin{align*}
&\big(E(\zeta)\langle\cdot^{\otimes n},\xi^{\otimes n}\rangle\big)(\omega)
=\langle(\omega+\zeta)^{\otimes n},\xi^{\otimes n}\rangle\nonumber
=\sum_{k=0}^n\binom{n}{k}\langle\omega,\xi\rangle^k \langle\zeta,\xi\rangle^{n-k}\nonumber \\
&\quad=\sum_{k=0}^n\frac1{k!}\big(D(\zeta)^k\langle\cdot^{\otimes n},\xi^{\otimes n}\rangle\big)(\omega)=\sum_{k=0}^\infty\frac1{k!}\big(D(\zeta)^k\langle\cdot^{\otimes n},\xi^{\otimes n}\rangle\big)(\omega),
\end{align*}
which implies the statement.
\end{proof}

Note that Lemmas \ref{yufr7r8865} and \ref{Lemma4} imply that  $E(\zeta)\in\mathcal{L}(\mathcal{P}(\Phi '))$  for each $\zeta\in\Phi '$.

\begin{definition}
 We say that an  operator $T\in\mathcal{L}(\mathcal{P}(\Phi '))$ is
 {\it shift-invariant\/} if
 $$TE(\zeta)=E(\zeta)T\quad\text{for all }\zeta\in\Phi'.$$
  We denote the linear space of all shift-invariant operators  by $\mathbb{S}(\mathcal P(\Phi'))$. The space $\mathbb{S}(\mathcal P(\Phi'))$ is an algebra under  the usual product (composition) of operators.
\end{definition}

Obviously, for each $\zeta\in\Phi'$, the operators  $D(\zeta)$ and $E(\zeta)$ belong to $\mathbb{S}(\mathcal P(\Phi'))$.

\section{Monic polynomial sequences on a co-nuclear space}\label{gfu8cd}

We will now introduce the notion of a polynomial sequence on $\Phi'$.

For each $n\in\mathbb N_0$, we denote by $\mathcal P^{(n)}(\Phi ')$ the subspace of $\mathcal P(\Phi ')$ that consists of all polynomials on $\Phi '$ of degree $\le n$.

\begin{lemma}\label{frt}
A mapping $\mathbb P^{(n)}:\Phi ^{\odot n}\to\mathcal P^{(n)}(\Phi')$ is linear and continuous if and only if  it is of the form 
\begin{equation}\label{bt7r95}
\big(\mathbb P^{(n)}f^{(n)}\big)(\omega)=\langle P^{(n)}(\omega),f^{(n)}\rangle ,\end{equation}
where $P^{(n)}:\Phi'\to \Phi '{}^{\odot n}$ is a continuous mapping of the form 
\begin{equation}\label{ty65e48z}
P^{(n)}(\omega)=\sum_{k=0}^n U_{n,k}\,\omega^{\otimes k}\end{equation}
with $U_{n,k}\in\mathcal L(\Phi '{}^{\odot k},\Phi '{}^{\odot n})$.
\end{lemma}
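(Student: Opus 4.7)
The plan is to prove the two directions of the equivalence separately, exploiting the algebra isomorphism $I:\mathcal F_{\mathrm{fin}}(\Phi)\to\mathcal P(\Phi')$ from \eqref{Eq5} to reduce questions about $\mathcal P^{(n)}(\Phi')$ to statements about finitely supported sequences of coefficients in the spaces $\Phi^{\odot k}$.

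For the sufficiency direction, assume $\mathbb P^{(n)}$ is given by \eqref{bt7r95}--\eqref{ty65e48z}. Passing to adjoints via Remark \ref{tfur7}, I would rewrite
$$(\mathbb P^{(n)}f^{(n)})(\omega)=\sum_{k=0}^n\langle\omega^{\otimes k},U_{n,k}^*f^{(n)}\rangle,\qquad U_{n,k}^*\in\mathcal L(\Phi^{\odot n},\Phi^{\odot k}).$$
Under $I$, the polynomial $\mathbb P^{(n)}f^{(n)}$ corresponds to the tuple $(U_{n,k}^*f^{(n)})_{k=0}^n$ in the finite direct sum $\bigoplus_{k=0}^n\Phi^{\odot k}\subset\mathcal F_{\mathrm{fin}}(\Phi)$. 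Linearity of $\mathbb P^{(n)}$ in $f^{(n)}$ is immediate and continuity follows from continuity of each $U_{n,k}^*$ together with the definition of the topology on the direct sum.

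For the necessity direction, start from a linear continuous $\mathbb P^{(n)}$. The uniqueness of the representation \eqref{dtdsr6e} supplies, for each $f^{(n)}$, unique vectors $V_{n,k}f^{(n)}\in\Phi^{\odot k}$ such that
$$(\mathbb P^{(n)}f^{(n)})(\omega)=\sum_{k=0}^n\langle\omega^{\otimes k},V_{n,k}f^{(n)}\rangle.$$
Uniqueness combined with linearity of $\mathbb P^{(n)}$ forces each $V_{n,k}:\Phi^{\odot n}\to\Phi^{\odot k}$ to be linear, and its continuity follows by composing $I^{-1}\circ\mathbb P^{(n)}$ with the continuous $k$-th coordinate projection of $\mathcal F_{\mathrm{fin}}(\Phi)$ onto $\Phi^{\odot k}$. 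Setting $U_{n,k}:=V_{n,k}^*\in\mathcal L(\Phi'{}^{\odot k},\Phi'{}^{\odot n})$, again by Remark \ref{tfur7}, produces the desired form \eqref{bt7r95}--\eqref{ty65e48z}.

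The principal remaining point, which I expect to be the main technical obstacle, is to verify that the nonlinear map $P^{(n)}:\Phi'\to\Phi'{}^{\odot n}$ defined by \eqref{ty65e48z} is continuous. Each summand factors as $\omega\mapsto\omega^{\otimes k}\mapsto U_{n,k}\omega^{\otimes k}$, so by continuity and linearity of $U_{n,k}$ it suffices to handle the polynomial map $\omega\mapsto\omega^{\otimes k}$ from $\Phi'$ to $\Phi'{}^{\odot k}$. On each Hilbert space $\mathcal H_{-\tau}$ from the triple \eqref{fd6re7i}, this is a continuous polynomial map taking values in $\mathcal H_{-\tau}^{\odot k}\hookrightarrow\Phi'{}^{\odot k}$; the continuity on $\Phi'$ equipped with its Mackey (equivalently, inductive-limit) topology then follows from the corresponding universal property for polynomial maps out of an inductive limit of Hilbert spaces of the type appearing in \eqref{fd6re7i}, which is the subtle point to spell out carefully.
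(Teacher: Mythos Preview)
Your argument is correct and follows essentially the same route as the paper: pass between the operators $U_{n,k}$ on the $\Phi'$ side and their adjoints $V_{k,n}=U_{n,k}^*$ on the $\Phi$ side, using that the topology on $\mathcal P^{(n)}(\Phi')$ is, via $I$, that of the finite direct sum $\bigoplus_{k=0}^n\Phi^{\odot k}$. The paper's proof is terser than yours and simply asserts the existence of the continuous $V_{k,n}$ ``by the definition of $\mathcal P^{(n)}(\Phi')$'' without spelling out the coordinate-projection argument you give; in the converse direction it does not pause over the continuity of the nonlinear map $\omega\mapsto P^{(n)}(\omega)$ at all, treating it as evident, so the point you flag as the ``main technical obstacle'' is not one the authors regard as requiring a separate argument.
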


\begin{proof}

Let  $\mathbb P^{(n)}\in\mathcal L\big(\Phi ^{\odot n},\mathcal P^{(n)}(\Phi ')\big)$. Then, by the definition of $\mathcal P^{(n)}(\Phi ')$, there exist operators $V_{k,n}\in\mathcal L(\Phi ^{\odot n},\Phi ^{\odot k})$, $k=0,1,\dots,n$, such that, for any $f^{(n)}\in\Phi ^{\odot n}$ and $\omega\in\Phi '$
\begin{align*}
\big(P^{(n)}f^{(n)}\big)(\omega)&=\sum_{k=0}^n\langle \omega^{\otimes k},V_{k,n}f^{(n)}\rangle\\
&=\sum_{k=0}^n\langle U_{n,k}\,\omega^{\otimes k},f^{(n)}\rangle,
\end{align*}
 where $U_{n,k}:=V^*_{k,n}\in\mathcal L(\Phi '{}^{\odot k},\Phi '{}^{\odot n})$. 
Conversely, every $P^{(n)}$ of the form \eqref{ty65e48z} determines $\mathbb P^{(n)}\in\mathcal L\big(\Phi ^{\odot n},\mathcal P^{(n)}(\Phi ')\big)$ by formula \eqref{bt7r95}.
 \end{proof}

\begin{definition}\label{ye6eu0y87}
Assume that, for each $n\in\mathbb N_0$,  $P^{(n)}:\Phi'\to \Phi '{}^{\odot n}$
is of the form \eqref{ty65e48z} with $U_{n,k}\in\mathcal L(\Phi '{}^{\odot k},\Phi '{}^{\odot n})$. Furthermore, assume that, for each $n\in\mathbb N_0$,   $U_{n,n}\in \mathcal L(\Phi '{}^{\odot n})$ is  a homeomorphism.
Then we call $(P^{(n)})_{n=0}^\infty$ a  {\it  polynomial sequence on $\Phi '$}.

If additionally, for each $n\in\mathbb N_0$,  $U_{n,n}=\mathbf 1$, the identity operator on
$\Phi'{}^{\odot n}$, then we call $(P^{(n)})_{n=0}^\infty$ a {\it  monic polynomial sequence on $\Phi '$}.

\end{definition}

\begin{remark}
Below, to simplify notations, we will only deal with monic polynomial sequences.  The results of this paper can be extended to the case of a general polynomial sequence on $\Phi'=\mathcal D'$.
\end{remark}

\begin{remark} By the definition of a monic polynomial sequence we get
\begin{equation}
\langle P^{(n)}(\omega),f^{(n)}\rangle=\langle\omega^{\otimes n},
f^{(n)}\rangle+\sum_{k=0}^{n-1}\langle\omega^{\otimes k},
V_{k,n}f^{(n)}\rangle,\quad f^{(n)}\in\Phi ^{\odot n}\label{gtdyf7ti}
\end{equation}
where $V_{k,n}:=U_{n,k}^*\in \mathcal L(\Phi ^{\odot n},\Phi ^{\odot k})$.
\end{remark}

\begin{lemma}\label{Lemma1}
Let $(P^{(n)})_{n=0}^\infty$ be a monic polynomial sequence on $\Phi '$.
The following statements hold.

 {\rm (i)}
There exist operators $R_{k,n}\in\mathcal{L}(\Phi^{\odot n},\Phi^{\odot k})$, $k=0,1,\dots,n-1$, $n\in\mathbb N$, such that, for all $\omega\in\Phi'$ and
$f^{(n)}\in\Phi^{\odot n}$,
\begin{equation}\label{vgftydf7}
\langle\omega^{\otimes n},f^{(n)}\rangle
=\langle P^{(n)}(\omega), f^{(n)}\rangle +\sum_{k=0}^{n-1}\langle P^{(k)}(\omega), R_{k,n}f^{(n)}\rangle.\end{equation}

{\rm (ii)} We have
\begin{equation}\label{tse546i78}
\mathcal{P}(\Phi')=\left\{\sum_{k=0}^n\langle P^{(k)},f^{(k)}\rangle\mid
f^{(k)}\in\Phi^{\odot k},\ k=0,1,\dots,n,\ n\in\N_0\right\}.
\end{equation}
\end{lemma}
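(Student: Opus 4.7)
Both parts rest on inverting the upper-triangular relation coming from \eqref{gtdyf7ti}, so I would do all the work in part~(i) and deduce part~(ii) as a corollary.

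\emph{Part (i).} I would argue by induction on $n$. Rewriting \eqref{gtdyf7ti} as
$$\langle\omega^{\otimes n},f^{(n)}\rangle=\langle P^{(n)}(\omega),f^{(n)}\rangle-\sum_{k=0}^{n-1}\langle\omega^{\otimes k},V_{k,n}f^{(n)}\rangle,$$
the base case $n=0$ is trivial: the sum is empty and $\omega^{\otimes 0}=1=P^{(0)}(\omega)$ since $U_{0,0}=\mathbf 1$. For the inductive step, I apply the induction hypothesis to each summand on the right with index $k<n$, using the element $V_{k,n}f^{(n)}\in\Phi^{\odot k}$ in place of the generic test vector:
$$\langle\omega^{\otimes k},V_{k,n}f^{(n)}\rangle=\langle P^{(k)}(\omega),V_{k,n}f^{(n)}\rangle+\sum_{j=0}^{k-1}\langle P^{(j)}(\omega),R_{j,k}V_{k,n}f^{(n)}\rangle.$$
Substituting these expressions and interchanging the order of summation via $\sum_{k=0}^{n-1}\sum_{j=0}^{k-1}=\sum_{j=0}^{n-2}\sum_{k=j+1}^{n-1}$, I read off the coefficient of $\langle P^{(j)}(\omega),\,\cdot\,\rangle$ for each $j=0,\dots,n-1$ and define
$$R_{j,n}:=-V_{j,n}-\sum_{k=j+1}^{n-1}R_{j,k}V_{k,n}\in\mathcal L(\Phi^{\odot n},\Phi^{\odot j}),$$
where the empty sum (the case $j=n-1$) gives $R_{n-1,n}=-V_{n-1,n}$. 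Continuity and linearity of $R_{j,n}$ follow at once from the corresponding properties of the $V_{k,n}$ and of the previously constructed $R_{j,k}$ (for $k<n$), so the induction goes through.

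\emph{Part (ii).} The inclusion $\supset$ is clear: each $\langle P^{(k)},f^{(k)}\rangle$ is a polynomial of degree $\le k$ by \eqref{bt7r95}--\eqref{ty65e48z}, hence lies in $\mathcal P(\Phi')$. For the reverse inclusion, take any $P\in\mathcal P(\Phi')$, write it in the canonical form $P(\omega)=\sum_{k=0}^n\langle\omega^{\otimes k},f^{(k)}\rangle$, apply part~(i) to each monomial $\langle\omega^{\otimes k},f^{(k)}\rangle$, and regroup by the degree of the resulting $P^{(\cdot)}$ terms. This produces the desired representation of $P$ as a finite sum of the form $\sum_{k=0}^n\langle P^{(k)},g^{(k)}\rangle$ with $g^{(k)}\in\Phi^{\odot k}$.

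\emph{Main obstacle.} There is no real analytic difficulty here: the recursion terminates after $n$ steps, each $R_{j,n}$ is a finite sum of compositions of continuous linear maps between nuclear spaces, and topology plays no nontrivial role. The only delicate point is the index bookkeeping in the inductive step --- one has to make sure that when unwinding the nested sum and swapping summation order, the term $-V_{j,n}$ coming from the single sum is correctly merged with the contributions $-R_{j,k}V_{k,n}$ coming from the double sum, which is exactly what the recursive formula for $R_{j,n}$ records.
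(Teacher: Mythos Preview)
Your proof is correct and follows essentially the same approach as the paper: induction on $n$, rewriting \eqref{gtdyf7ti} to isolate $\langle\omega^{\otimes n},f^{(n)}\rangle$, applying the induction hypothesis to each lower-degree monomial, and reading off the $R_{j,n}$; part~(ii) is then immediate. Your version is in fact slightly more explicit than the paper's, since you write out the recursion $R_{j,n}=-V_{j,n}-\sum_{k=j+1}^{n-1}R_{j,k}V_{k,n}$ and spell out both inclusions in~(ii).
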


\begin{proof} (i) We prove by induction on $n$. For $n=1$, the statement trivially holds. Assume that the statement holds for $1,2,\dots,n$. Then, by using \eqref{gtdyf7ti} and the induction assumption, we get
\begin{align*}
&\langle\omega^{\otimes (n+1)},f^{(n+1)}\rangle=\langle P^{(n+1)}(\omega),f^{(n+1)}\rangle-\sum_{k=0}^n\langle \omega^{\otimes k},V_{k,n+1}f^{(n+1)}\rangle\\
&=\langle P^{(n+1)}(\omega),f^{(n+1)}\rangle-\sum_{k=0}^n\left(
\langle P^{(k)}(\omega),V_{k,n+1}f^{(n+1)}\rangle+\sum_{i=0}^{k-1}\langle P^{(i)}(\omega),R_{i,k}V_{k,n+1}f^{(n+1)}\rangle
\right),
\end{align*}
which implies the statement for $n+1$.

(ii) This follows immediately from (i).
\end{proof}

\begin{definition}
Let $(P^{(n)})_{n=0}^\infty$ be a monic polynomial sequence on $\Phi '$.
For each $\zeta\in\Phi '$, we define a {\it lowering operator} $Q(\zeta)$  as the linear operator on $\mathcal P(\Phi ')$ (cf.\ \eqref{tse546i78}) satisfying
\begin{align*}
&Q(\zeta)\langle P^{(n)},f^{(n)}\rangle:=
\langle P^{(n-1)},\mathfrak A(\zeta)f^{(n)}\rangle,\quad f^{(n)}\in\Phi^{\odot n},\ n\in\N,\\
&Q(\zeta)\langle P^{(0)},f^{(0)}\rangle:=0,\quad f^{(0)}\in\R,
\end{align*}
where the operator $\mathfrak A(\zeta)$ is defined by Definition \ref{tee665i4ei}. \end{definition}

\begin{lemma}\label{Lemma3}
For every $\zeta\in\Phi'$, we have
$Q(\zeta)\in\mathcal{L}(\mathcal{P}(\Phi'))$.
\end{lemma}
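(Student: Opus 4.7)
The plan is to imitate the proof of Lemma \ref{yufr7r8865}, which expressed $D(\zeta)=I\mathfrak A(\zeta)I^{-1}$, by realizing $Q(\zeta)$ as the conjugate, under a suitable topological linear isomorphism, of a manifestly continuous shift-type operator on $\mathcal F_{\mathrm{fin}}(\Phi)$.

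First, I would introduce the linear map $\Psi\colon\mathcal F_{\mathrm{fin}}(\Phi)\to\mathcal P(\Phi')$ defined by
\[
(\Psi f)(\omega):=\sum_{k=0}^{\infty}\langle P^{(k)}(\omega),f^{(k)}\rangle,\qquad f=(f^{(k)})_{k=0}^{\infty}\in\mathcal F_{\mathrm{fin}}(\Phi),
\]
where the sum is in fact finite. Expanding each $\langle P^{(k)}(\omega),f^{(k)}\rangle$ via \eqref{gtdyf7ti} and collecting terms of fixed degree shows $\Psi=I\circ M$, where $M\in\mathcal L(\mathcal F_{\mathrm{fin}}(\Phi))$ is the ``upper triangular with identity on the diagonal'' operator $(Mf)^{(k)}=f^{(k)}+\sum_{n>k}V_{k,n}f^{(n)}$. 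Lemma \ref{Lemma1}(i) provides a two-sided inverse $M^{-1}\in\mathcal L(\mathcal F_{\mathrm{fin}}(\Phi))$ of the same form with the $R_{k,n}$ in place of the $V_{k,n}$. Continuity of both $M$ and $M^{-1}$ is automatic since the $V_{k,n}$ and $R_{k,n}$ are continuous and only finitely many summands contribute to each coordinate. Hence $\Psi$ is a linear homeomorphism from $\mathcal F_{\mathrm{fin}}(\Phi)$ onto $\mathcal P(\Phi')$.

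Second, I define $\tilde Q(\zeta)\in\mathcal L(\mathcal F_{\mathrm{fin}}(\Phi))$ by $(\tilde Q(\zeta)f)^{(k)}:=\mathfrak A(\zeta)f^{(k+1)}$; its continuity is immediate from $\mathfrak A(\zeta)\in\mathcal L(\mathcal F_{\mathrm{fin}}(\Phi))$ (Definition \ref{tee665i4ei}) together with the direct-sum topology on $\mathcal F_{\mathrm{fin}}(\Phi)$.

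Finally, since $\Psi$ is a bijection, Lemma \ref{Lemma1}(ii) yields that each $P\in\mathcal P(\Phi')$ admits a unique expansion $\sum_k\langle P^{(k)},f^{(k)}\rangle$, so the defining relations for $Q(\zeta)$ are unambiguous. A direct computation on such expansions then gives the intertwining identity $Q(\zeta)\circ\Psi=\Psi\circ\tilde Q(\zeta)$ on $\mathcal F_{\mathrm{fin}}(\Phi)$, i.e.\ $Q(\zeta)=\Psi\circ\tilde Q(\zeta)\circ\Psi^{-1}$, which is a composition of three continuous linear maps and hence belongs to $\mathcal L(\mathcal P(\Phi'))$. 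The single technical ingredient is the continuity of $\Psi^{-1}$, which is entirely supplied by Lemma \ref{Lemma1}(i); once that is in place, the remainder of the argument is a formal verification on the natural generators of $\mathcal P(\Phi')$.
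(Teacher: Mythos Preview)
Your argument is correct and coincides with the paper's proof: your $M$ and $M^{-1}$ are exactly the operators $V$ and $R$ defined there, your $\tilde Q(\zeta)$ is literally $\mathfrak A(\zeta)$ viewed as an operator on $\mathcal F_{\mathrm{fin}}(\Phi)$, and your factorization $Q(\zeta)=\Psi\,\tilde Q(\zeta)\,\Psi^{-1}$ with $\Psi=IM$ unwinds to the paper's formula $Q(\zeta)=IV\mathfrak A(\zeta)RI^{-1}$.
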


\begin{proof} We define an operator
$R\in\mathcal{L}(\mathcal{F}_{\mathrm{fin}}(\Phi))$ by setting, for each
$f^{(n)}\in\Phi^{\odot n}$,
\[
\big(Rf^{(n)}\big){}^{(k)}:=\begin{cases}
R_{k,n}f^{(n)},& k< n\\
f^{(n)},& k=n,\\
0,& k> n,
\end{cases}
\]
where the operators $R_{k,n}$ are as in \eqref{vgftydf7}. Similarly, using the operators $V_{k,n}$ from formula \eqref{gtdyf7ti}, we define an operator $V\in\mathcal{L}(\mathcal{F}_{\mathrm{fin}}(\Phi))$.
As easily seen,
\[
Q(\zeta)=IV\mathfrak A(\zeta)RI^{-1},
\]
where $I$ is the homeomorphism defined by \eqref{Eq5}. This  implies the
required result.
\end{proof}

The simplest example of a monic polynomial sequence  on $\Phi'$ is $P^{(n)}(\omega)=\omega^{\otimes n}$, $n\in\mathbb N_0$. In this case, for $f^{(n)}\in\Phi^{\odot n}$,
$\langle P^{(n)}(\omega),f^{(n)}\rangle=\langle \omega^{\otimes n},f^{(n)}\rangle$ is just a {\it monomial on $\Phi'$ of degree $n$}.
For each $\zeta\in\Phi'$, we obviously have $Q(\zeta)=D(\zeta)$, i.e., the corresponding lowering operators are just differentiation operators.
 Furthermore, we trivially see in this case that,
 for any $n\in\mathbb N$ and any $\omega,\zeta\in\Phi'$,
\begin{equation}
P^{(n)}(\omega+\zeta)=\sum_{k=0}^n\binom{n}{k}P^{(k)}(\omega)\odot P^{(n-k)}(\zeta).\label{Eq2}
\end{equation}

\begin{definition} Let $(P^{(n)})_{n=0}^\infty$ be a monic polynomial sequence on $\Phi'$.
We say that  $(P^{(n)})_{n=0}^\infty$ is of {\it binomial type} if, for any $n\in\mathbb N$ and any $\omega,\zeta\in\Phi'$, formula \eqref{Eq2} holds.
\end{definition}

\begin{remark}\label{Remark1}
A monic polynomial sequence  $(P^{(n)})_{n=0}^\infty$  is of  binomial type if and only if,  for any $n\in\mathbb N$, $\omega,\zeta\in\Phi'$, and $\xi\in\Phi$,
\[
\langle P^{(n)}(\omega+\zeta),\xi^{\otimes n}\rangle=\sum_{k=0}^n\binom{n}{k}\langle P^{(k)}(\omega),\xi^{\otimes k}\rangle \langle P^{(n-k)}(\zeta),\xi^{\otimes (n-k)}\rangle.
\]
\end{remark}

The following lemma will be important for our considerations.

\begin{lemma}\label{Lemma2}
Let $(P^{(n)})_{n=0}^\infty$ be a monic polynomial sequence  on
$\Phi'$ of binomial type. Then, for each $n\in\N$, $P^{(n)}(0)=0$.
\end{lemma}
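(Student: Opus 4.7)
The plan is to evaluate the binomial-type identity \eqref{Eq2} at $\omega=\zeta=0$ and run a short induction on $n$, with the base case handled by hand.

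First I would pin down $P^{(0)}$. According to Definition \ref{ye6eu0y87}, a monic polynomial sequence satisfies $P^{(n)}(\omega)=\sum_{k=0}^{n}U_{n,k}\,\omega^{\otimes k}$ with $U_{n,n}=\mathbf 1$. For $n=0$ the only term is $U_{0,0}\,\omega^{\otimes 0}=1$, so $P^{(0)}(\omega)\equiv 1\in\R=\Phi'{}^{\odot 0}$. In particular, for every $F\in\Phi'{}^{\odot n}$, we have $P^{(0)}(0)\odot F=F\odot P^{(0)}(0)=F$, which is exactly what makes the $k=0$ and $k=n$ summands on the right-hand side of \eqref{Eq2} collapse to $P^{(n)}(0)$.

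Setting $\omega=\zeta=0$ in the binomial-type formula \eqref{Eq2} gives
\[
P^{(n)}(0)=\sum_{k=0}^{n}\binom{n}{k}P^{(k)}(0)\odot P^{(n-k)}(0).
\]
For the base case $n=1$ this reads $P^{(1)}(0)=P^{(0)}(0)\odot P^{(1)}(0)+P^{(1)}(0)\odot P^{(0)}(0)=2P^{(1)}(0)$, so $P^{(1)}(0)=0$. For the induction step, assume $P^{(k)}(0)=0$ for all $1\le k\le n-1$. Then every summand in the displayed identity with $1\le k\le n-1$ vanishes (one of its symmetric tensor factors is zero), leaving only the $k=0$ and $k=n$ terms, each equal to $P^{(n)}(0)$. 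Hence $P^{(n)}(0)=2P^{(n)}(0)$, i.e.\ $P^{(n)}(0)=0$.

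There is no real obstacle: the only thing one has to be careful about is the identification of $P^{(0)}(0)$ as the unit of the symmetric tensor algebra (so that the extreme terms in the binomial sum genuinely reduce to $P^{(n)}(0)$), which is guaranteed by the monic normalization $U_{0,0}=\mathbf 1$. Everything else is a two-line induction from the binomial-type identity evaluated at the origin.
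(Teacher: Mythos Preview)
Your proof is correct and follows essentially the same induction-on-$n$ strategy as the paper, evaluating the binomial identity \eqref{Eq2} at the origin and using $P^{(0)}\equiv 1$. The only cosmetic difference is that the paper sets just $\zeta=0$ (keeping $\omega$ generic) to obtain $P^{(n)}(\omega)=P^{(n)}(\omega)+P^{(n)}(0)$, whereas you set both $\omega=\zeta=0$ to get $P^{(n)}(0)=2P^{(n)}(0)$; either way the conclusion is immediate.
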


\begin{proof}
We proceed by induction on $n$. For $n=1$, it follows from \eqref{Eq2} that
\[
P^{(1)}(\omega+\zeta)=P^{(1)}(\omega)+ P^{(1)}(\zeta),\quad\omega,\zeta\in\Phi'.
\]
Setting $\zeta=0$, one obtains $P^{(1)}(0)=0$. Assume that the statement
holds for $1,2,\dots n$. Then, for all $\omega,\zeta\in\Phi'$,
\[
P^{(n+1)}(\omega+\zeta)=P^{(n+1)}(\omega)+ \sum_{k=1}^n\binom{n+1}{k}P^{(k)}(\omega)\odot P^{(n+1-k)}(\zeta)+P^{(n+1)}(\zeta).
\]
Setting $\zeta=0$, we conclude   $P^{(n+1)}(0)=0$.
\end{proof}

\section{Equivalent characterizations of a polynomial\\ sequence of binomial type}\label{f8oqtf}

Our next aim is to derive  equivalent characterizations of a polynomial sequence on $\Phi'$ of binomial type. As mentioned in Introduction, it will  be important for our considerations that $\Phi'$ will be chosen as a space of generalized functions.

So we fix $d\in\mathbb N$ and choose $\Phi$ to be the nuclear space $\mathcal{D}:=C_0^\infty(\R^d)$  of all
real-valued smooth functions on $\R^d$ with compact support.
 More precisely, let $T$ denote the set of all pairs $(l,\varphi)$ with $l\in\mathbb N_0$ and $\varphi\in C^\infty(\R^d)$, $\varphi(x)\ge1$ for all $x\in\R^d$. For each $\tau=(l,\varphi)\in T$, we denote by $\mathcal H_\tau$ the Sobolev  space $W^{l,2}(\mathbb R^d, \varphi(x)\,dx)$. Then
$$\mathcal D=\operatornamewithlimits{proj\,lim}_{\tau\in T}\mathcal H_\tau,$$
 see \cite[Chapter 14, Subsec.~4.3]{BSU2} for details. As the center space $\mathcal H_0=\mathcal H_{\tau_0}$ we choose $L^2(\R^d,dx)$ (i.e., $\tau_0=(0,1)$). Thus, we obtain
 the Gel'fand triple
\[
\mathcal{D}\subset L^2(\R^d,dx)\subset \mathcal{D}'.
\]

Note that nuclear space $\mathcal D^{\odot n}$ consists of all functions from  $C^\infty_0((\R^d)^n)$ that are symmetric in the variables $(x_1,\dots,x_n)\in(\R^d)^n$.

For each $x\in\R^d$, the delta function $\delta_x$ belongs to $\mathcal D'$, and we will use the notations
$$D(x):=D(\delta_x),\quad E(x):=E(\delta_x),\quad Q(x):=Q(\delta_x)$$
(the latter operator being defined for a given fixed monic polynomial sequence on $\mathcal D'$).

Below, for  any
$F^{(k)}\in\mathcal{D}'{}^{\odot k}$ and
$f^{(k)}\in\mathcal{D}^{\odot k}$, $k\in\mathbb N_0$, we denote
\[
\langle F^{(k)}(x_1,\ldots,x_k),f^{(k)}(x_1,\ldots,x_k)\rangle
:=\langle F^{(k)},f^{(k)}\rangle.
\]

\begin{theorem}\label{Theorem1}
Let $(P^{(n)})_{n=0}^\infty$ be a monic polynomial sequence
 on
$\mathcal D'$ such that $P^{(n)}(0)=0$ for all $n\in\N$. Let
$(Q(\zeta))_{\zeta\in\mathcal{D}'}$ be the corresponding  lowering operators.
Then the following conditions are equivalent:
\begin{enumerate}

\item[{\rm (BT1)}] The sequence $(P^{(n)})_{n=0}^\infty$ is of binomial type.

\item[{\rm (BT2)}] For each $\zeta\in\mathcal{D}'$, $Q(\zeta)$ is shift-invariant.

\item[{\rm (BT3)}] There exists  a sequence $(B_k)_{k=1}^\infty$ with
$B_k\in\mathcal{L}(\mathcal{D}',\mathcal{D}'^{\odot  k})$,
$k\geq 2$ and  $B_1=\mathbf 1$, the identity operator on  $\mathcal D'$, such that for all $\zeta\in\mathcal{D}'$
and  $P\in\mathcal{P}(\mathcal{D}')$,
\begin{equation}\label{tfer67}
(Q(\zeta)P)(\omega)=\sum_{k=1}^\infty\frac{1}{k!}
\big\langle (B_k\zeta)(x_1,\ldots,x_k), (D(x_1)\dotsm D(x_k)P)(\omega)\big\rangle,
\quad\omega\in\mathcal{D}'.
\end{equation}

\item[{\rm (BT4)}] The monic polynomial sequence $(P^{(n)})_{n=0}^\infty$ has the generating function
\begin{align}
\sum_{n=0}^\infty\frac{1}{n!}\langle P^{(n)}(\omega),\xi^{\otimes n}\rangle&=
\sum_{n=0}^\infty\frac{1}{n!}\langle\omega^{\otimes n},A(\xi)^{\otimes n}\rangle\notag\\
&=\sum_{n=0}^\infty\frac{1}{n!}\left\langle \omega, A(\xi)\right\rangle^n=\exp\big[\langle\omega,A(\xi)\rangle\big],\quad\omega\in\mathcal D'.\label{Eq23}
\end{align}
Here
\begin{equation}\label{ft7r}
A(\xi)=\sum_{k=1}^\infty A_k\xi^{\otimes k}\in
\mathcal S(\mathcal D,\mathcal D),\end{equation}
where
$A_k\in\mathcal{L}(\mathcal{D}^{\odot  k},\mathcal{D})$, $k\geq 2$, and
$A_1=\mathbf 1$, the identity operator on  $\mathcal D$, while \eqref{Eq23} is an equality in $\mathcal S(\mathcal D,\R)$.
\end{enumerate}
\end{theorem}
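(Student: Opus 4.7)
My plan is to close the cycle (BT1) $\Rightarrow$ (BT2) $\Rightarrow$ (BT3) $\Rightarrow$ (BT4) $\Rightarrow$ (BT1), using the operator expansion theorem (Theorem~\ref{Corollary1}) and the formal tensor-power-series calculus of the Appendix as the two nontrivial engines. For (BT1) $\Rightarrow$ (BT2), by Lemma~\ref{Lemma1}(ii) and Lemma~\ref{yde6ue6cb} it suffices to verify $Q(\eta)E(\zeta) = E(\zeta)Q(\eta)$ on the test polynomials $\langle P^{(n)},\xi^{\otimes n}\rangle$, $\xi\in\mathcal D$. Expanding $\langle P^{(n)}(\omega+\zeta),\xi^{\otimes n}\rangle$ via the binomial identity of Remark~\ref{Remark1} and using the defining relation $Q(\eta)\langle P^{(k)},\xi^{\otimes k}\rangle = k\langle\eta,\xi\rangle\langle P^{(k-1)},\xi^{\otimes(k-1)}\rangle$, both $Q(\eta)E(\zeta)\langle P^{(n)},\xi^{\otimes n}\rangle$ and $E(\zeta)Q(\eta)\langle P^{(n)},\xi^{\otimes n}\rangle$ collapse to the same expression via the elementary reindexing $(k+1)\binom{n}{k+1} = n\binom{n-1}{k}$.

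For (BT2) $\Rightarrow$ (BT3), I would feed each shift-invariant $Q(\zeta)$ into Theorem~\ref{Corollary1} to obtain a representation of the form in \eqref{tfer67} with some kernels $C_k(\zeta)\in\mathcal D'{}^{\odot k}$. Linearity and continuity of $\zeta\mapsto Q(\zeta)$, inherited from the linearity of $\mathfrak A(\zeta)$ in $\zeta$, promote these to continuous linear operators $B_k\in\mathcal L(\mathcal D',\mathcal D'{}^{\odot k})$ with $C_k(\zeta) = B_k\zeta$. The degree-lowering property of $Q(\zeta)$ kills the $k=0$ term, and evaluating the expansion on a linear polynomial $\omega\mapsto\langle\omega,f^{(1)}\rangle = \langle P^{(1)}(\omega),f^{(1)}\rangle$ (recall $P^{(1)}(\omega) = \omega$ by monicity and Lemma~\ref{Lemma2}) yields $\langle B_1\zeta,f^{(1)}\rangle = \langle\zeta,f^{(1)}\rangle$, forcing $B_1 = \mathbf 1$.

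The technical heart is (BT3) $\Rightarrow$ (BT4). I would introduce the dual formal series $B(\xi) := \sum_{k\ge 1}\frac{1}{k!}B_k^*\xi^{\otimes k}\in\mathcal S(\mathcal D,\mathcal D)$, where $B_k^*\in\mathcal L(\mathcal D^{\odot k},\mathcal D)$ by Proposition~\ref{cyr768o65}. Because the linear part $B_1^* = \mathbf 1$ is a homeomorphism, the compositional-inverse theorem for formal tensor-power series — the genuine appeal to the Appendix — produces a unique $A\in\mathcal S(\mathcal D,\mathcal D)$ of the form \eqref{ft7r} with $A_1 = \mathbf 1$ and $B(A(\xi)) = \xi$. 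Set $\tilde G(\omega,\xi) := \exp[\langle\omega,A(\xi)\rangle]$; since $D(x)\exp[\langle\omega,f\rangle] = f(x)\exp[\langle\omega,f\rangle]$ for $f\in\mathcal D$, iteration gives $D(x_1)\dotsm D(x_k)\tilde G(\omega,\xi) = A(\xi)^{\otimes k}(x_1,\dots,x_k)\,\tilde G(\omega,\xi)$, and substituting into (BT3) collapses to $Q(\zeta)\tilde G = \langle\zeta,B(A(\xi))\rangle\tilde G = \langle\zeta,\xi\rangle\tilde G$. Writing $\tilde G(\omega,\xi) = \sum_n\frac{1}{n!}\langle\tilde P^{(n)}(\omega),\xi^{\otimes n}\rangle$ and matching $\xi^{\otimes n}$ coefficients, this eigenvalue identity reproduces the recursion $Q(\zeta)\langle\tilde P^{(n)},\xi^{\otimes n}\rangle = n\langle\zeta,\xi\rangle\langle\tilde P^{(n-1)},\xi^{\otimes(n-1)}\rangle$ that defines $(P^{(n)})$. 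Since $\tilde P^{(0)} = 1 = P^{(0)}$, $\tilde P^{(1)}(\omega) = \omega = P^{(1)}(\omega)$, and $\tilde P^{(n)}(0) = 0 = P^{(n)}(0)$ (the former from $\tilde G(0,\xi) = 1$), an induction on $n$ — using Lemma~\ref{Lemma1}(ii), Lemma~\ref{yde6ue6cb}, and the fact that $\mathfrak A(\eta)g^{(k)} = 0$ for all $\eta\in\mathcal D'$ forces $g^{(k)} = 0$ — identifies $\tilde P^{(n)}$ with $P^{(n)}$ and establishes \eqref{Eq23}.

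Finally, (BT4) $\Rightarrow$ (BT1) follows at once from the factorization $\exp[\langle\omega+\zeta,A(\xi)\rangle] = \exp[\langle\omega,A(\xi)\rangle]\exp[\langle\zeta,A(\xi)\rangle]$: comparison of $\xi^{\otimes n}$ coefficients in $\mathcal S(\mathcal D,\mathbb R)$ yields the form of the binomial identity in Remark~\ref{Remark1}, which Lemma~\ref{yde6ue6cb} promotes to \eqref{Eq2}. The main obstacle in the whole argument is (BT3) $\Rightarrow$ (BT4): both the construction of the compositional inverse $A = B^{-1}$ in $\mathcal S(\mathcal D,\mathcal D)$ and the rigorous interpretation of the formal identities $D(x_1)\dotsm D(x_k)\tilde G = A(\xi)^{\otimes k}\tilde G$ and $Q(\zeta)\tilde G = \langle\zeta,\xi\rangle\tilde G$ as equalities in $\mathcal S(\mathcal D,\mathcal P(\mathcal D'))$ depend essentially on the formal tensor-power-series machinery developed in the Appendix.
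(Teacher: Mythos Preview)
Your cycle is correct. The steps (BT1)$\Rightarrow$(BT2), (BT2)$\Rightarrow$(BT3), and (BT4)$\Rightarrow$(BT1) match the paper's arguments almost verbatim (the paper identifies $B_k=R_{1,k}^*$ directly from \eqref{vgftydf7} rather than arguing continuity of $\zeta\mapsto Q(\zeta)$, but this amounts to the same computation).

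The genuine divergence is in (BT3)$\Rightarrow$(BT4). The paper takes a structural route: it first proves (BT2)$\Leftrightarrow$(BT1) separately (via Proposition~\ref{Lemma5}), then establishes the isomorphism theorem (Theorem~\ref{Theorem2}) and its corollary $\mathcal J:\mathbb S(\mathcal P(\mathcal D'))\to\mathcal S(\mathcal D,\R)$, and computes $\mathcal J E(\zeta)$ in two different ways---once as $\exp[\langle\zeta,\xi\rangle]$ and once by expanding $E(\zeta)$ in the $Q(x)$-calculus via \eqref{gytt7r}---to obtain \eqref{Eq22}, which is \eqref{Eq23} precomposed with $B(\xi)$. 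Your approach bypasses the isomorphism theorem entirely: you write down the candidate $\tilde G(\omega,\xi)=\exp[\langle\omega,A(\xi)\rangle]$, verify the formal eigenvalue relation $Q(\zeta)\tilde G=\langle\zeta,\xi\rangle\tilde G$, read off the lowering recursion for the coefficients $\tilde P^{(n)}$, and conclude $\tilde P^{(n)}=P^{(n)}$ by a uniqueness induction (kernel of all $Q(\zeta)$ on polynomials of bounded degree vanishing at $0$ is zero). Your route is more elementary and self-contained---it needs only the compositional inverse from the Appendix, not the full $\mathcal J$-isomorphism---whereas the paper's route simultaneously yields Theorem~\ref{Theorem2} and Corollary~\ref{65er5i458o55} as reusable tools, which are then exploited again in the Sheffer-sequence section. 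Both are valid; yours is the shorter path to Theorem~\ref{Theorem1} alone.
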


\begin{remark}
Note that $\sum_{n=0}^\infty\frac{1}{n!}\langle\omega^{\otimes n},A(\xi)^{\otimes n}\rangle\in\mathcal S(\mathcal D,\R)$ in  formula \eqref{Eq23} is the composition of $\exp[\langle\omega,\xi\rangle]:=\sum_{n=0}^\infty\frac1{n!}\,\langle \omega^{\otimes n},\xi^{\otimes n}\rangle\in\mathcal S(\mathcal D,\R)$ and $A(\xi)\in \mathcal S(\mathcal D,\mathcal D)$.  \end{remark}

\begin{remark}\label{bhgyutfu} Let $A(\xi)\in\mathcal S(\mathcal D,\mathcal D)$ be as in \eqref{Eq23}. Denote
$$B(\xi):=\sum_{k=1}^\infty \frac1{k!}\,B^*_k\xi^{\otimes k}\in\mathcal S(\mathcal D,\mathcal D),$$
where the operators $B_k$ are as in (BT3). It will follow from the proof of Theorem~\ref{Theorem1} that $B(\xi)$ is the compositional inverse of $A(\xi)$, see Definition~\ref{vtre7i}, Proposition~\ref{rw4w4w5}, and Remark~\ref{yrei4vvv}.
\end{remark}

\begin{remark}
It will also follow from the proof of Theorem~\ref{Theorem1} that, in (BT3), for each $k\ge2$, we have $B_k=R_{1,k}^*$, the adjoint of the operator $R_{1,k}\in\mathcal L(\mathcal D^{\odot k},\mathcal D)$ from formula \eqref{vgftydf7}.
 \end{remark}

Before proving this theorem, let us first note its immediate corollary.

\begin{corollary}\label{cds5w6} Consider any sequence $(A_k)_{k=1}^\infty$ with
$A_k\in\mathcal{L}(\mathcal{D}^{\odot  k},\mathcal{D})$, $k\geq 2$, and
$A_1=\mathbf 1$. Then there exists a unique sequence $(P^{(n)})_{n=0}^\infty$ of monic  polynomials on $\mathcal D'$ of binomial type that has the generating function~\eqref{Eq23} with $A(\xi)$ given by \eqref{ft7r}.
\end{corollary}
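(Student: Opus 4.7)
My plan is to prove both existence and uniqueness directly from the generating-function relation \eqref{Eq23}, by reading off the $P^{(n)}$ as $\xi^{\otimes n}$-coefficients of the formal series $\exp[\langle\omega,A(\xi)\rangle]$.

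For existence, I would expand
$$\exp[\langle\omega,A(\xi)\rangle]=\sum_{k=0}^\infty\frac1{k!}\langle\omega^{\otimes k},A(\xi)^{\otimes k}\rangle,$$
treating the powers $A(\xi)^{\otimes k}$ as products of the formal tensor power series $A(\xi)\in\mathcal S(\mathcal D,\mathcal D)$ in the sense of the appendix. Because each $A_j$ raises the $\xi$-degree by $j\ge 1$, only the indices $k\le n$ contribute to the $\xi^{\otimes n}$-coefficient, and summing over all compositions $n=j_1+\dots+j_k$ with $j_i\ge 1$ produces operators $C_{n,k}\in\mathcal L(\mathcal D^{\odot n},\mathcal D^{\odot k})$ with $C_{n,n}=\mathbf 1$ (using $A_1=\mathbf 1$ and the unique decomposition $n=1+\dots+1$). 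I then define $P^{(n)}(\omega)\in\mathcal D'{}^{\odot n}$ by specifying
$$\langle P^{(n)}(\omega),f^{(n)}\rangle:=\sum_{k=0}^n\frac{n!}{k!}\,\langle\omega^{\otimes k},C_{n,k}f^{(n)}\rangle,\quad f^{(n)}\in\mathcal D^{\odot n},$$
which is precisely of the form \eqref{ty65e48z} with leading coefficient the identity, so $(P^{(n)})_{n=0}^\infty$ is a monic polynomial sequence on $\mathcal D'$ per Definition~\ref{ye6eu0y87}, and \eqref{Eq23} holds by construction. Binomial type then follows for free from Theorem~\ref{Theorem1}, since condition (BT4) holds and implies (BT1).

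Uniqueness is then immediate: if $(\tilde P^{(n)})_{n=0}^\infty$ is another such sequence with the same generating function, then comparing $\xi^{\otimes n}$-coefficients gives $\langle\tilde P^{(n)}(\omega),\xi^{\otimes n}\rangle=\langle P^{(n)}(\omega),\xi^{\otimes n}\rangle$ for all $\xi\in\mathcal D$ and all $\omega\in\mathcal D'$, and Lemma~\ref{yde6ue6cb}(i) applied to $\tilde P^{(n)}(\omega)-P^{(n)}(\omega)$ forces $\tilde P^{(n)}(\omega)=P^{(n)}(\omega)$. The main technical point I anticipate is to verify that the formal composition really does produce operators $C_{n,k}$ that are genuinely continuous from $\mathcal D^{\odot n}$ into $\mathcal D^{\odot k}$; since each $C_{n,k}$ is a finite symmetrised sum of compositions of the continuous operators $A_{j_1},\dots,A_{j_k}$, this should reduce to the properties of formal tensor power series collected in the appendix.
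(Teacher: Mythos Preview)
Your approach is essentially the same as the paper's: define the $P^{(n)}$ by reading off the $\xi^{\otimes n}$-coefficients of $\exp[\langle\omega,A(\xi)\rangle]$ (the paper phrases this as ``define $(\frac1{n!}P^{(n)}(\omega))_{n=0}^\infty\in\mathcal F(\mathcal D')$ by \eqref{Eq23}'' and refers to Definition~\ref{cxes6u54} for the composition), verify monicity via $A_1=\mathbf 1$, and then invoke (BT4)$\Rightarrow$(BT1) of Theorem~\ref{Theorem1}. Your uniqueness argument via Lemma~\ref{yde6ue6cb}(i) is also fine and matches the spirit of the paper's ``unique''.

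One small but necessary point you skip: Theorem~\ref{Theorem1} has the standing hypothesis $P^{(n)}(0)=0$ for all $n\in\mathbb N$, so before invoking (BT4)$\Rightarrow$(BT1) you must check this. It is immediate from your own expansion---the $k=0$ summand in $\exp[\langle\omega,A(\xi)\rangle]$ is the constant $1$, which contributes only to the $\xi^{\otimes 0}$-coefficient, so for $n\ge 1$ your $C_{n,0}$ vanishes and hence $P^{(n)}(0)=0$---but you should state it explicitly. The paper does exactly this (``in the representation~\eqref{ty65e48z} of $P^{(n)}(\omega)$, we obtain $U_{n,0}=0$ so that $P^{(n)}(0)=0$'') before applying Theorem~\ref{Theorem1}.
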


\begin{proof}
Define $A(\xi)\in\mathcal S(\mathcal D,\mathcal D)$ by formula \eqref{ft7r}. For each $\omega\in\mathcal D'$, define
$(\frac1{n!}\,P^{(n)}(\omega))_{n=0}^\infty\in\mathcal F(\mathcal D')$ by formula \eqref{Eq23}. It easily follows from Definitions~\ref{ye6eu0y87} and \ref{cxes6u54}  that $(P^{(n)})_{n=0}^\infty$ is a monic polynomial sequence on $\mathcal D'$. Furthermore, for $n\in\mathbb N$, in the representation~\eqref{ty65e48z} of $P^{(n)}(\omega)$, we obtain $U_{n,0}=0$ so that $P^{(n)}(0)=0$. Now the statement follows from Theorem~\ref{Theorem1}.
\end{proof}

We will now prove Theorem~\ref{Theorem1}.

\begin{proof}[Proof of $\mathrm{(BT1)}\Rightarrow \mathrm{(BT2)}$]
First, we note that, for any $\eta,\zeta\in\mathcal D'$,
\begin{equation}\label{u6t8}
E(\zeta)Q(\eta)1=Q(\eta)E(\zeta)1=0.\end{equation}
Next, using the the binomial identity \eqref{Eq2}, we get,  for all $\xi\in\mathcal D$ and $n\in\N$,
\begin{align}
Q(\eta)E(\zeta)\langle P^{(n)},\xi^{\otimes n}\rangle
&=\sum_{k=0}^n\binom{n}{k}\langle P^{(n-k)}(\zeta),\xi^{\otimes (n-k)}\rangle
Q(\eta)\langle P^{(k)},\xi^{\otimes k}\rangle\notag\\
&=\langle\eta,\xi\rangle\sum_{k=1}^n\binom{n}{k}k\langle P^{(n-k)}(\zeta),\xi^{\otimes (n-k)}\rangle
\langle P^{(k-1)},\xi^{\otimes (k-1)}\rangle\notag\\
&=n\langle\eta,\xi\rangle\sum_{k=1}^n\binom{n-1}{k-1}\langle P^{(n-k)}(\zeta),\xi^{\otimes (n-k)}\rangle
\langle P^{(k-1)},\xi^{\otimes (k-1)}\rangle\notag\\
&=n\langle\eta,\xi\rangle\sum_{k=0}^{n-1}\binom{n-1}{k}\langle P^{(n-k-1)}(\zeta),\xi^{\otimes (n-k-1)}\rangle
\langle P^{(k)},\xi^{\otimes k}\rangle\notag\\
&=n\langle\eta,\xi\rangle E(\zeta)\langle P^{(n-1)},\xi^{\otimes (n-1)}\rangle\notag\\
&=E(\zeta)Q(\eta)\langle P^{(n)},\xi^{\otimes n}\rangle.\label{v65r7}
\end{align}
By \eqref{u6t8}, \eqref{v65r7}, and Lemma \ref{Lemma1}, we get $E(\zeta)Q(\eta)P=Q(\eta)E(\zeta)P$ for all $P\in\mathcal P(\mathcal D')$.\end{proof}

In order to prove the implication $\mathrm{(BT2)}\Rightarrow \mathrm{(BT1)}$, we first need the following

\begin{proposition}[Polynomial expansion]\label{Lemma5}
Let $(P^{(n)})_{n=0}^\infty$ be a monic polynomial sequence on $\mathcal D'$ such that $P^{(n)}(0)=0$ for all $n\in\N$,  or,
equivalently, for $P^{(n)}$ being of the form \eqref{ty65e48z}, $U_{n,0}=0$. Let
$(Q(\zeta))_{\zeta\in\mathcal D'}$ be the corresponding  lowering operators.
Then, for each $P\in\mathcal{P}(\mathcal{D}')$, we have
\begin{equation}
P(\omega)=\sum_{k=0}^\infty\frac{1}{k!}
\big\langle P^{(k)}(\omega)(x_1,\ldots,x_k), (Q(x_1)\dotsm Q(x_k)P)(0)\big\rangle,\quad
\omega\in\mathcal{D}'.\label{Eq8}
\end{equation}
Here,   for $k=0$, we set
$Q(x_1)\dotsm Q(x_k)P:=P$.
\end{proposition}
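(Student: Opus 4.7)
The plan is to reduce \eqref{Eq8} to a single concrete computation and then invoke density. By Lemma \ref{Lemma1}(ii), every $P\in\mathcal P(\mathcal D')$ is a finite linear combination of terms $\langle P^{(n)},f^{(n)}\rangle$ with $f^{(n)}\in\mathcal D^{\odot n}$. Since both sides of \eqref{Eq8} are linear in $P$, it suffices to verify the identity for $P=\langle P^{(n)},f^{(n)}\rangle$ for each fixed $n$ and $f^{(n)}$. Moreover, both sides depend continuously and linearly on $f^{(n)}$, so by Remark~\ref{t5768909} together with Lemma~\ref{yde6ue6cb}(ii) I may further restrict to the case $f^{(n)}=\xi^{\otimes n}$ with arbitrary $\xi\in\mathcal D$.

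For such a $P$, the iterated action of the lowering operator is transparent: combining the definition of $Q(\zeta)$ with $\mathfrak A(\zeta)\xi^{\otimes m}=m\langle\zeta,\xi\rangle\xi^{\otimes(m-1)}$, a routine induction on $k$ gives, for any $\zeta_1,\dots,\zeta_k\in\mathcal D'$ and $1\le k\le n$,
$$Q(\zeta_1)\dotsm Q(\zeta_k)\langle P^{(n)},\xi^{\otimes n}\rangle=\frac{n!}{(n-k)!}\langle\zeta_1,\xi\rangle\dotsm\langle\zeta_k,\xi\rangle\langle P^{(n-k)},\xi^{\otimes(n-k)}\rangle,$$
and zero for $k>n$. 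Specialising $\zeta_j=\delta_{x_j}$ and evaluating at $\omega=0$, the assumption $P^{(m)}(0)=0$ for $m\ge 1$ kills every term except $k=n$. Since $P^{(0)}=1$ by monicity, that surviving term reads
$$(Q(x_1)\dotsm Q(x_n)\langle P^{(n)},\xi^{\otimes n}\rangle)(0)=n!\,\xi(x_1)\dotsm\xi(x_n)=n!\,\xi^{\otimes n}(x_1,\dots,x_n),$$
which is a symmetric smooth kernel, hence a bona fide element of $\mathcal D^{\odot n}\subset\mathcal D'^{\odot n}$.

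Substituting these values into the right-hand side of \eqref{Eq8} collapses the infinite sum to its $k=n$ term,
$$\frac1{n!}\big\langle P^{(n)}(\omega)(x_1,\dots,x_n),\,n!\,\xi^{\otimes n}(x_1,\dots,x_n)\big\rangle=\langle P^{(n)}(\omega),\xi^{\otimes n}\rangle=P(\omega),$$
as required. The point that deserves the most care, and is really the only non-computational obstacle, is justifying that $(Q(x_1)\dotsm Q(x_k)P)(0)$ is an honest element of $\mathcal D'^{\odot k}$ (symmetric in $x_1,\dots,x_k$) depending continuously on $f^{(n)}$; the explicit iterative formula above delivers exactly this, making the passage from $f^{(n)}=\xi^{\otimes n}$ to general $f^{(n)}\in\mathcal D^{\odot n}$ legitimate. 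Beyond that, \eqref{Eq8} is a Taylor-type expansion relative to the basis $(P^{(n)})_{n=0}^\infty$, and the hypothesis $P^{(m)}(0)=0$ for $m\ge 1$ is precisely the condition that forces only the `correct' coefficient to survive.
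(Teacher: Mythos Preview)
Your proof is correct and follows essentially the same route as the paper: both compute the iterated lowering action on $\langle P^{(n)},\xi^{\otimes n}\rangle$, evaluate at $0$ using $P^{(m)}(0)=0$ to isolate the $k=n$ term, and then extend to general $P$ via Lemma~\ref{Lemma1}. Your version is slightly more explicit about the density/polarization step (invoking Remark~\ref{t5768909} and Lemma~\ref{yde6ue6cb}), which the paper leaves implicit in its appeal to Lemma~\ref{Lemma1}.
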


\begin{proof}
For $x_1,\dots,x_k\in\R^d$, $k\in\N$,  $\xi\in\mathcal D$, and $n\in\N$, we have
\begin{equation}
Q(x_1)\dotsm Q(x_k)\langle P^{(n)},\xi^{\otimes n}\rangle=
(n)_k\xi(x_1)\dotsm\xi(x_k)\langle P^{(n-k)},\xi^{\otimes (n-k)}\rangle,
\label{Eq9}
\end{equation}
where $(n)_k:=n(n-1)\dotsm(n-k+1)$. Note that $(n)_k=0$ for $k>n$.
Hence, for  $k,n\in\N_0$, one finds
\[
\left(Q(x_1)\dotsm Q(x_k)\langle P^{(n)},\xi^{\otimes n}\rangle\right)(0)=
\delta_{k,n}n!\,\xi^{\otimes n}(x_1,\dots,x_n),
\]
where $\delta_{k,n}$ denotes the Kronecker symbol.  Thus,
\[
\langle P^{(n)}(\omega),\xi^{\otimes n}\rangle=
\sum_{k=0}^\infty\frac{1}{k!}\big\langle P^{(k)}(\omega)(x_1,\ldots,x_k),\left(Q(x_1)\dotsm Q(x_k)\langle P^{(n)},\xi^{\otimes n}\rangle\right)(0)\big\rangle.
\]
Hence, by Lemma \ref{Lemma1}, formula \eqref{Eq8} holds for a generic
$P\in\mathcal{P}(\mathcal{D}')$.
\end{proof}

\begin{proof}[Proof of $\mathrm{(BT2)}\Rightarrow \mathrm{(BT1)}$]
Let $\zeta\in\mathcal{D}'$, $\xi\in\mathcal D$, and $n\in\N$. An application of Proposition~\ref{Lemma5} to the polynomial
$P=E(\zeta)\langle P^{(n)},\xi^{\otimes n}\rangle$ yields
\begin{equation}\label{yei754wwq}
\langle P^{(n)}(\omega+\zeta),\xi^{\otimes n}\rangle
=\sum_{k=0}^\infty\frac{1}{k!}
\big\langle P^{(k)}(\omega)(x_1,\ldots,x_k), (Q(x_1)\dotsm Q(x_k)E(\zeta)\langle P^{(n)},\xi^{\otimes n}\rangle)(0)\big\rangle,
\end{equation}
and by (BT2) and \eqref{Eq9}, we have, for $k\in\mathbb N$,
\begin{align*}
\left(Q(x_1)\dotsm Q(x_k)E(\zeta)\langle P^{(n)},\xi^{\otimes n}\rangle\right)(0)&=\left(E(\zeta) Q(x_1)\dotsm Q(x_k)\langle P^{(n)},\xi^{\otimes n}\rangle\right)(0)\\
&=\left(Q(x_1)\dotsm Q(x_k)\langle P^{(n)},\xi^{\otimes n}\rangle\right)(\zeta)\\
&=(n)_k\zeta(x_1)\dotsm\zeta(x_k)\langle P^{(n-k)}(\zeta),\xi^{\otimes (n-k)}\rangle.
\end{align*}
Hence,
$$
\langle P^{(n)}(\omega+\zeta),\xi^{\otimes n}\rangle
=\sum_{k=0}^n\binom{n}{k}\langle P^{(k)}(\omega),\xi^{\otimes k}\rangle\langle P^{(n-k)}(\zeta),\xi^{\otimes (n-k)}\rangle.\qedhere
$$
\end{proof}

Thus, we have proved the equivalence of (BT1) and (BT2). To continue the proof of Theorem \ref{Theorem1}, we will need the following result.

\begin{theorem}[Operator expansion theorem]\label{Corollary1}
Let $(P^{(n)})_{n=0}^\infty$ be a monic polynomial sequence  on
$\mathcal{D}'$ of binomial type, and let $(Q(\zeta))_{\zeta\in\mathcal{D}'}$
 be
the corresponding lowering operators. A
linear operator $T$ acting on $\mathcal{P}(\mathcal{D}')$ is continuous and shift-invariant if and only if
 there is a $(G^{(k)})_{k=0}^\infty\in\mathcal{F}(\mathcal{D}')$ such that, for each $P\in\mathcal{P}(\mathcal{D}')$,
\begin{equation}
(TP)(\omega)=\sum_{k=0}^\infty\frac1{k!}\,\langle G^{(k)}(x_1,\ldots,x_k),(Q(x_1)\dotsm Q(x_k)P)(\omega)\rangle,\quad\omega\in\mathcal{D}'.\label{Eq12}
\end{equation}
In the latter case, for each $k\in\N_0$ and
$f^{(k)}\in\mathcal{D}^{\odot k}$,
\begin{equation}
\langle G^{(k)},f^{(k)}\rangle=\left(T\langle P^{(k)},f^{(k)}\rangle\right)(0).\label{Eq11}
\end{equation}
\end{theorem}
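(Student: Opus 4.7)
The plan is to prove the two directions of the ``iff'' separately; necessity is the substantive direction, while sufficiency and formula \eqref{Eq11} follow by direct verification. Throughout I would freely use the equivalence (BT1)$\Leftrightarrow$(BT2), which has already been established in the excerpt.

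For \emph{necessity}, given a continuous shift-invariant $T$, I would define $G^{(k)}$ by \eqref{Eq11} and then verify the expansion \eqref{Eq12}. First, the map $f^{(k)}\mapsto(T\langle P^{(k)},f^{(k)}\rangle)(0)$ is a continuous linear functional on $\mathcal D^{\odot k}$: it factors as the continuous map $\mathbb P^{(k)}$ from Lemma~\ref{frt}, then $T$, then evaluation at $0$, which is continuous because, under the isomorphism $I$, it amounts to the projection of $\mathcal F_{\mathrm{fin}}(\mathcal D)$ onto its zeroth summand. This identifies $G^{(k)}\in\mathcal D'{}^{\odot k}$. To verify \eqref{Eq12}, I would first test it on the ``diagonal'' polynomial $P=\langle P^{(n)},\xi^{\otimes n}\rangle$: shift-invariance gives $(TP)(\omega)=(TE(\omega)P)(0)$; the binomial identity (BT1) expands $E(\omega)P$; applying $T$ term-by-term and evaluating at $0$ yields $\sum_{k=0}^n\binom{n}{k}\langle P^{(n-k)}(\omega),\xi^{\otimes(n-k)}\rangle\langle G^{(k)},\xi^{\otimes k}\rangle$. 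On the other hand, formula \eqref{Eq9} of Proposition~\ref{Lemma5} shows the right-hand side of \eqref{Eq12} equals the same sum. I would then extend the identity from $\xi^{\otimes n}$ to all $f^{(n)}\in\mathcal D^{\odot n}$ via Lemma~\ref{yde6ue6cb}(i), noting both sides are continuous linear functionals of $f^{(n)}$, and propagate to all of $\mathcal P(\mathcal D')$ by linearity using Lemma~\ref{Lemma1}(ii).

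For \emph{sufficiency}, given $(G^{(k)})\in\mathcal F(\mathcal D')$, I would define $T$ by \eqref{Eq12}. Since $Q(x_1)\cdots Q(x_k)P=0$ once $k$ exceeds the degree of $P$, the sum is finite and $T$ is a well-defined linear operator; continuity reduces to continuity on each summand $\mathcal D^{\odot n}$ in the decomposition $\mathcal P(\mathcal D')\simeq\mathcal F_{\mathrm{fin}}(\mathcal D)$, which holds because each term in the finite sum is a composition of continuous maps. Shift-invariance follows because (BT2) says each $Q(x)$ commutes with $E(\zeta)$, and hence so does $T$. Finally, formula \eqref{Eq11} is recovered by plugging $P=\langle P^{(k)},f^{(k)}\rangle$ at $\omega=0$ into \eqref{Eq12}: terms with $j<k$ vanish because $P^{(k-j)}(0)=0$ by Lemma~\ref{Lemma2}, terms with $j>k$ vanish because $\mathfrak A(x_1)\cdots\mathfrak A(x_j)f^{(k)}=0$, and the $j=k$ term reduces to $\langle G^{(k)},f^{(k)}\rangle$ after using $\mathfrak A(x_1)\cdots\mathfrak A(x_k)f^{(k)}=k!\,f^{(k)}(x_1,\ldots,x_k)$. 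I expect the main obstacle to be the polarization step in necessity: extending identities from $\xi^{\otimes n}$ to general symmetric tensors via Lemma~\ref{yde6ue6cb}, which demands careful continuity bookkeeping for both sides of \eqref{Eq12}; a secondary point is verifying that $(x_1,\ldots,x_k)\mapsto(Q(x_1)\cdots Q(x_k)P)(\omega)$ defines an element of $\mathcal D^{\odot k}$ so the pairing with $G^{(k)}$ is meaningful.
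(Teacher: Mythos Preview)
Your proposal is correct and follows essentially the same route as the paper. The only organizational difference is that the paper first packages the binomial identity into a ``master'' expansion for the shift operators themselves,
\[
(E(\zeta)P)(\omega)=\sum_{k=0}^\infty\frac1{k!}\big\langle P^{(k)}(\omega)(x_1,\dots,x_k),(Q(x_1)\dotsm Q(x_k)P)(\zeta)\big\rangle,
\]
valid for all $P\in\mathcal P(\mathcal D')$, and then applies $T$ in the variable $\zeta$ and sets $\zeta=0$; this yields \eqref{Eq12} for general $P$ in one stroke and sidesteps the explicit polarization step you flagged as a potential obstacle. Your route---test on $\langle P^{(n)},\xi^{\otimes n}\rangle$, then polarize via Lemma~\ref{yde6ue6cb} and extend by Lemma~\ref{Lemma1}(ii)---achieves the same end with only slightly more bookkeeping. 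The sufficiency direction and the recovery of \eqref{Eq11} are handled identically in both.
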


\begin{remark}
Below we will sometimes write formula \eqref{Eq12} in the form
$$T=\sum_{k=0}^\infty\frac1{k!}\,\big\langle G^{(k)}(x_1,\ldots,x_k),Q(x_1)\dotsm Q(x_k) \big\rangle.$$
\end{remark}

\begin{proof}[Proof of Theorem \ref{Corollary1}]
  Let $n\in\mathbb N$,  $\omega,\zeta\in\mathcal D'$, and $\xi\in\mathcal D$.  By \eqref{yei754wwq},  we have
\begin{equation}\label{t8p969p}
\big(E(\zeta)\langle P^{(n)},\xi^{\otimes n}\rangle\big)(\omega)=\sum_{k=0}^\infty\frac1{k!}\big\langle P^{(k)}(\omega)(x_1,\dots,x_k)
 ,(Q(x_1)\dotsm Q(x_k)\langle P^{(n)},\xi^{\otimes n}\rangle)(\zeta)\big\rangle.\end{equation}
  Note that formula \eqref{t8p969p} remains true when $n=0$. Hence, by Lemma \ref{Lemma1}, for each $P\in\mathcal P(\mathcal D')$,
\begin{equation}
 \big(E(\zeta)P\big)(\omega)=\sum_{k=0}^\infty\frac1{k!}\big\langle P^{(k)}(\omega)(x_1,\dots,x_k)
 ,(Q(x_1)\dotsm Q(x_k)P)(\zeta)\big\rangle.\label{gytt7r}
\end{equation}

 Assume $T\in\mathcal{L}(\mathcal{P}(\mathcal{D}'))$ is shift-invariant.
 Swapping $\omega$ and $\zeta$ in \eqref{gytt7r} and applying $T$ to this equality, we get,  for any $\omega,\zeta\in\mathcal D'$ and $P\in\mathcal P(\mathcal D')$,
\begin{equation}\label{guft7ur76r}
(TE(\omega)P)(\zeta)=
\sum_{k=0}^\infty\frac{1}{k!}
\left(T\big\langle P^{(k)}(\cdot)(x_1,\ldots,x_k), (Q(x_1)\dotsm Q(x_k)P)(\omega)\big\rangle\right)(\zeta).
\end{equation}
By shift-invariance, the left hand side of \eqref{guft7ur76r} is equal  to
 $(TP)(\omega+\zeta)$. In particular, this holds for
$\zeta=0$:
\begin{equation}\label{hjf}
(TP)(\omega)=\sum_{k=0}^\infty\frac{1}{k!}
\left(T\big\langle P^{(k)}(\cdot)(x_1,\ldots,x_k), (Q(x_1)\dotsm Q(x_k)P)(\omega)\big\rangle\right)(0).
\end{equation}
Let $G^{(k)}\in\mathcal{D}^{\prime\odot k}$, $k\in\N_0$, be defined by \eqref{Eq11}. Then \eqref{Eq12} follows from \eqref{hjf}.

Conversely, let $(G^{(k)})_{k=0}^\infty\in\mathcal{F}(\mathcal{D}')$  be fixed, and let $T$ be given by \eqref{Eq12}. As easily seen, $T\in\mathcal L(\mathcal P(\mathcal D'))$.
For each $P\in\mathcal{P}(\mathcal{D}')$ and $\zeta\in\mathcal D'$, we get from \eqref{Eq12} and  (BT2):
\begin{align*}
(TE(\zeta)P)(\omega)&=\sum_{k=0}^\infty\frac1{k!}\,\big\langle G^{(k)}(x_1,\ldots,x_k),(E(\zeta)Q(x_1)\dotsm Q(x_k)P)(\omega)\big\rangle\\
&=\sum_{k=0}^\infty\frac1{k!}\,\big\langle G^{(k)}(x_1,\ldots,x_k),(Q(x_1)\dotsm Q(x_k)P)(\omega+\zeta)\big\rangle\\
&=(TP)(\omega+\zeta)=(E(\zeta)TP)(\omega).
\end{align*}
Therefore, the operator $T$ is shift-invariant. Moreover, it easily follows from
\eqref{Eq12} and \eqref{Eq9} that \eqref{Eq11} holds.
\end{proof}

Note that the statement $\mathrm{(BT3)}\Rightarrow\mathrm{(BT2)}$ follows immediately from Theorem~\ref{Corollary1}.

\begin{proof}[Proof of $\mathrm{(BT2)}\Rightarrow \mathrm{(BT3)}$]
Let $\zeta\in\mathcal D'$. We apply Theorem \ref{Corollary1} to
the sequence of monomials and its  family of lowering operators, $(D(\eta))_{\eta\in\mathcal D'}$, and the shift-invariant operator $Q(\zeta)$. By using also formula \eqref{vgftydf7}, we obtain
\begin{equation}
Q(\zeta)  =\sum_{k=1}^\infty\frac{1}{k!}\big\langle G^{(k)}(\zeta,x_1,\dots,x_k),
D(x_1)\dotsm D(x_k)\big\rangle,\label{Eq15}
\end{equation}
where
\begin{equation}\label{gugtil}
\langle G^{(k)}(\zeta,x_1,\ldots,x_k),f^{(k)}(x_1,\ldots,x_k)\rangle=
\left(Q(\zeta)\langle\cdot^{\otimes k},f^{(k)}\rangle\right)(0)
=\langle\zeta,R_{1,k}f^{(k)}\rangle
\end{equation}
for all $k\in\N$ and $f^{(k)}\in\mathcal{D}^{\odot k}$. Here, we set $R_{1,1}:=\mathbf 1$, the identity operator on  $\mathcal D$.
For $k\in\mathbb N$, we denote $B_k:=R_{1,k}^*\in\mathcal{L}(\mathcal{D}',\mathcal{D}^{\prime\odot k})$. Note that $B_1=\mathbf 1$, the identity operator on  $\mathcal D'$. By \eqref{gugtil},
\begin{equation}
G^{(k)}(\zeta,\cdot)=B_k\zeta,\quad k\geq1.\label{Eq16}
\end{equation}
Formulas \eqref{Eq15}, \eqref{Eq16} imply (BT3).
\end{proof}

  Thus, we have proved the equivalence of (BT1), (BT2), and (BT3).

According to Theorem \ref{Corollary1}, under the conditions
assumed therein, there is a one-to-one correspondence between shift-invariant
operators $T$ and sequences $(\frac1{k!}\,G^{(k)})_{k=0}^\infty\in\mathcal{F}(\mathcal{D}')$.
We noted above that the space $\mathbb S(\mathcal P(\mathcal D'))$ of shift-invariant operators is an algebra under the product of operators, while $\mathcal{F}(\mathcal{D}')$ is a commutative algebra under the symmetric tensor product.

\begin{theorem}[The isomorphism theorem]\label{Theorem2}
Let $(P^{(n)})_{n=0}^\infty$ be a sequence of monic  polynomials on
$\mathcal{D}'$ of binomial type, and let $(Q(\zeta))_{\zeta\in\mathcal{D}'}$ be
the corresponding lowering operators. Then, the correspondence given by
Theorem \ref{Corollary1},
\[
\mathbb S(\mathcal P(\mathcal D'))\ni T\mapsto JT:=\left(\frac1{k!}\,G^{(k)}\right)_{k=0}^\infty\in\mathcal{F}(\mathcal{D}'),
\]
is an algebra isomorphism.
\end{theorem}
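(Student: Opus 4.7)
The plan is to verify three things: $J$ is a bijection, $J$ is linear, and $J$ converts the product of shift-invariant operators into the symmetric tensor product. The first two are immediate from the Operator expansion theorem. Indeed, Theorem~\ref{Corollary1} sets up a two-sided correspondence: formula \eqref{Eq12} reconstructs $T$ from any $(G^{(k)})_{k=0}^\infty \in \mathcal F(\mathcal D')$, while \eqref{Eq11} reads off $G^{(k)}$ from $T$ via $\langle G^{(k)}, f^{(k)}\rangle = (T\langle P^{(k)}, f^{(k)}\rangle)(0)$. Bijectivity follows, and linearity is immediate from \eqref{Eq11}.

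The crux is multiplicativity. Given $T_1, T_2 \in \mathbb S(\mathcal P(\mathcal D'))$ with $JT_j = (\frac{1}{k!}\, G_j^{(k)})_{k=0}^\infty$, write $J(T_1 T_2) = (\frac{1}{k!}\, G^{(k)})_{k=0}^\infty$. In view of \eqref{prodonFDprime}, what must be shown is
\[
G^{(k)} = \sum_{j=0}^k \binom{k}{j}\, G_1^{(k-j)} \odot G_2^{(j)}, \qquad k \in \N_0,
\]
and by Lemma~\ref{yde6ue6cb}(i) it is enough to check this identity after pairing with $\xi^{\otimes k}$ for arbitrary $\xi \in \mathcal D$. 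To compute the left-hand side, I would first apply $T_2$ to the polynomial $\langle P^{(k)}, \xi^{\otimes k}\rangle$ via \eqref{Eq12}. The lowering operators act cleanly on binomial-type polynomials by \eqref{Eq9}, namely $Q(x_1)\cdots Q(x_j)\langle P^{(k)}, \xi^{\otimes k}\rangle = (k)_j\, \xi(x_1)\cdots\xi(x_j)\, \langle P^{(k-j)}, \xi^{\otimes(k-j)}\rangle$, so after substitution and the identity $(k)_j/j! = \binom{k}{j}$ one obtains
\[
(T_2\langle P^{(k)}, \xi^{\otimes k}\rangle)(\omega) = \sum_{j=0}^k \binom{k}{j}\, \langle G_2^{(j)}, \xi^{\otimes j}\rangle\, \langle P^{(k-j)}(\omega), \xi^{\otimes(k-j)}\rangle.
\]

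Applying $T_1$ and evaluating at $\omega = 0$, with \eqref{Eq11} converting each $(T_1 \langle P^{(k-j)}, \xi^{\otimes(k-j)}\rangle)(0)$ into $\langle G_1^{(k-j)}, \xi^{\otimes(k-j)}\rangle$, yields
\[
\langle G^{(k)}, \xi^{\otimes k}\rangle = \sum_{j=0}^k \binom{k}{j}\, \langle G_1^{(k-j)}, \xi^{\otimes(k-j)}\rangle\, \langle G_2^{(j)}, \xi^{\otimes j}\rangle.
\]
Since $\xi^{\otimes k}$ is symmetric, each product of scalar pairings on the right collapses into $\langle G_1^{(k-j)} \odot G_2^{(j)}, \xi^{\otimes k}\rangle$ (the same identity that underlies \eqref{vut9p}), and Lemma~\ref{yde6ue6cb}(i) then closes the argument. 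The only real obstacle is bookkeeping: the computation hinges on the fact that the $Q(x_j)$'s collapse on $\langle P^{(k)}, \xi^{\otimes k}\rangle$ via \eqref{Eq9} in a form compatible with \eqref{Eq11}, which is what makes the binomial coefficients line up with the symmetric tensor product on the $\mathcal F(\mathcal D')$ side.
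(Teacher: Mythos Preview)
Your proof is correct and follows essentially the same route as the paper: both invoke Theorem~\ref{Corollary1} for bijectivity, derive the action of a shift-invariant operator on $\langle P^{(k)},\xi^{\otimes k}\rangle$ via \eqref{Eq12} and \eqref{Eq9}, and then compose two such operators to see the binomial convolution emerge. The only cosmetic difference is that the paper keeps $\omega$ general and matches the resulting expansion against formula~\eqref{ghcdry6}, whereas you evaluate at $\omega=0$ and appeal to \eqref{Eq11} and Lemma~\ref{yde6ue6cb}(i); both finish the argument equally well.
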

\begin{proof}
In view of Theorem \ref{Corollary1}, we only have to prove that, for
any $S,T\in\mathbb S(\mathcal P(\mathcal D'))$,
\begin{equation}\label{cftjrei6r8o}
J(ST)=JS\odot JT.
\end{equation}
Let
\[
JS=\left(\frac1{k!}\,F^{(k)}\right)_{k=0}^\infty,\quad JT= \left(\frac1{k!}\,G^{(k)}\right)_{k=0}^\infty.
\]
By Theorem \ref{Corollary1}, for all  $\xi\in\mathcal{D}$ and $\omega\in\mathcal D'$,
\begin{equation}\label{ghcdry6}
\left(T\langle P^{(n)},\xi^{\otimes n}\rangle\right)(\omega)
=\sum_{k=0}^n \binom nk\langle G^{(k)},\xi^{\otimes k}\rangle
\langle P^{(n-k)}(\omega),\xi^{\otimes (n-k)}\rangle,
\end{equation}
and a similar expression holds for $S$. Therefore,
\begin{align*}
&\left(ST\langle P^{(n)},\xi^{\otimes n}\rangle\right)(\omega)\\
&\quad=\sum_{k=0}^n \binom nk\langle G^{(k)},\xi^{\otimes k}\rangle
\left(S\langle P^{(n-k)},\xi^{\otimes (n-k)}\rangle\right)(\omega)\\
&\quad=\sum_{k=0}^n \binom nk\langle G^{(k)},\xi^{\otimes k}\rangle
\sum_{i=0}^{n-k}\binom {n-k}i\langle F^{(i)},\xi^{\otimes i}\rangle
\langle P^{(n-k-i)}(\omega),\xi^{\otimes (n-k-i)}\rangle\\
&\quad =\sum_{k=0}^n\sum_{i=0}^{n-k}\frac{n!}{k!\,i!\,(n-k-i)!}
\langle G^{(k)}\odot F^{(i)},\xi^{\otimes (k+i)}\rangle
\langle P^{(n-k-i)}(\omega),\xi^{\otimes (n-k-i)}\rangle\\
&\quad=\sum_{j=0}^n\binom nj \left\langle\sum_{k=0}^j \binom jk G^{(k)}\odot F^{(j-k)},
\xi^{\otimes j}\right\rangle\langle P^{(n-j)}(\omega),\xi^{\otimes (n-j)}\rangle.
\end{align*}
From here and \eqref{ghcdry6}, formula \eqref{cftjrei6r8o} follows.
\end{proof}

As an immediate consequence of Theorem \ref{Theorem2}, we conclude
\begin{corollary}\label{Corollary2}
Any  two shift-invariant operators  commute.
\end{corollary}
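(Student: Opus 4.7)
The plan is to invoke Theorem~\ref{Theorem2} directly: since $J$ is an algebra isomorphism between $\mathbb{S}(\mathcal{P}(\mathcal{D}'))$ (with composition of operators as its product) and $\mathcal{F}(\mathcal{D}')$ (with symmetric tensor product $\odot$ as its product), commutativity of one algebra transfers automatically to the other. The symmetric tensor product $\odot$ on $\mathcal{F}(\mathcal{D}')$ is commutative by its very definition~\eqref{prodonFDprime}, since $\operatorname{Sym}_n$ symmetrizes arbitrary permutations (recall \eqref{cyrd6ue7r}), so $F^{(i)} \odot G^{(j)} = G^{(j)} \odot F^{(i)}$ for all $i,j$.

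Concretely, for arbitrary $S,T \in \mathbb{S}(\mathcal{P}(\mathcal{D}'))$, I would compute
$$J(ST) = JS \odot JT = JT \odot JS = J(TS),$$
using the multiplicativity of $J$ (Theorem~\ref{Theorem2}) at the first and third equalities and the commutativity of $(\mathcal{F}(\mathcal{D}'),\odot)$ at the middle one. Since $J$ is a bijection, this forces $ST = TS$.

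One small point worth addressing in the proof: Theorem~\ref{Theorem2} is formulated relative to a fixed monic polynomial sequence of binomial type $(P^{(n)})_{n=0}^\infty$ on $\mathcal{D}'$, so I should note that such a sequence exists. The simplest witness is the monomial sequence $P^{(n)}(\omega) = \omega^{\otimes n}$, whose binomial property is precisely \eqref{Eq2}; alternatively, any sequence produced by Corollary~\ref{cds5w6} works. With this choice the isomorphism $J$ is well-defined and the argument above goes through.

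There is no genuine obstacle here: all the work was already done in establishing Theorem~\ref{Corollary1} and Theorem~\ref{Theorem2}. The corollary is a purely formal algebraic consequence of the existence of the isomorphism $J$ and the commutativity of $\odot$; no further analytic input is needed.
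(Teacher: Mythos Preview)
Your proposal is correct and follows exactly the paper's approach: the corollary is stated as an immediate consequence of Theorem~\ref{Theorem2}, relying on the fact that $J$ is an algebra isomorphism onto the commutative algebra $(\mathcal{F}(\mathcal{D}'),\odot)$. Your remark that one may take the monomial sequence $P^{(n)}(\omega)=\omega^{\otimes n}$ to instantiate Theorem~\ref{Theorem2} is a welcome clarification, though the paper leaves this implicit.
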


\begin{corollary}\label{65er5i458o55}
Let the conditions of Theorem \ref{Theorem2} be satisfied and let the operator $J:\mathbb S(\mathcal P(\mathcal D'))\to \mathcal F(\mathcal D')$ be defined as in that theorem. Define
$\mathcal J:\mathbb S(\mathcal P(\mathcal D'))\to\mathcal S(\mathcal D,\R)$
by
$\mathcal J:=\mathcal IJ$. Here $\mathcal I:\mathcal F(\mathcal D')\to\mathcal S(\mathcal D,\R)$ is defined by \eqref{ukt896p078}.
Then, for each $T\in\mathbb S(\mathcal P(\mathcal D'))$, we have
\begin{equation}
(\mathcal JT)(\xi)=\sum_{n=0}^\infty\frac{1}{n!}\left(T\langle P^{(n)},\xi^{\otimes n}\rangle\right)(0).\label{Eq18}
\end{equation}
Furthermore, $\mathcal J$ is an algebra isomorphism, i.e., for any $S,T\in\mathbb S(\mathcal P(\mathcal D'))$, we have
\begin{equation}\label{te56785}
\big(\mathcal J(ST)\big)(\xi)=(\mathcal JS)(\xi)(\mathcal JT)(\xi).\end{equation}
\end{corollary}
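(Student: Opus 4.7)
The statement is essentially a corollary of two facts already established in the paper: Theorem~\ref{Theorem2}, which asserts that $J:\mathbb S(\mathcal P(\mathcal D'))\to\mathcal F(\mathcal D')$ is an algebra isomorphism, and the isomorphism $\mathcal I:\mathcal F(\mathcal D')\to\mathcal S(\mathcal D,\R)$ defined in \eqref{ukt896p078} and shown in \eqref{ukt896p0781} to intertwine $\odot$ with the product in $\mathcal S(\mathcal D,\R)$. So the plan is simply to unfold the definition $\mathcal J=\mathcal I J$ and quote these facts.

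To establish formula \eqref{Eq18}, I would fix $T\in\mathbb S(\mathcal P(\mathcal D'))$ and, setting $JT=\bigl(\tfrac1{k!}G^{(k)}\bigr)_{k=0}^\infty\in\mathcal F(\mathcal D')$ as in Theorem~\ref{Theorem2}, compute
\[
(\mathcal JT)(\xi)=(\mathcal I(JT))(\xi)=\sum_{n=0}^\infty\Bigl\langle\tfrac1{n!}G^{(n)},\xi^{\otimes n}\Bigr\rangle=\sum_{n=0}^\infty\frac1{n!}\langle G^{(n)},\xi^{\otimes n}\rangle
\]
directly from the definition \eqref{ukt896p078} of $\mathcal I$. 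Formula \eqref{Eq11} of Theorem~\ref{Corollary1} with $f^{(n)}=\xi^{\otimes n}$ yields $\langle G^{(n)},\xi^{\otimes n}\rangle=(T\langle P^{(n)},\xi^{\otimes n}\rangle)(0)$, and substituting gives exactly \eqref{Eq18}.

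For the multiplicativity \eqref{te56785}, I would chain the two isomorphisms: given $S,T\in\mathbb S(\mathcal P(\mathcal D'))$, Theorem~\ref{Theorem2} gives $J(ST)=JS\odot JT$ in $\mathcal F(\mathcal D')$, and then applying $\mathcal I$ and using \eqref{ukt896p0781} yields
\[
(\mathcal J(ST))(\xi)=\bigl(\mathcal I(JS\odot JT)\bigr)(\xi)=(\mathcal I(JS))(\xi)\,(\mathcal I(JT))(\xi)=(\mathcal JS)(\xi)\,(\mathcal JT)(\xi).
\]
The fact that $\mathcal J$ is a bijection is automatic from the bijectivity of $J$ and $\mathcal I$.

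There is no genuine obstacle here; the only point to be mildly careful about is that the equalities involving the formal series live in $\mathcal S(\mathcal D,\R)$ and are to be interpreted coefficient-wise in the sense of Definition~\ref{es5w46u3}, so convergence questions do not arise. In this sense the proof is essentially a one-line composition of two algebra isomorphisms, together with a direct substitution of \eqref{Eq11} into the definition of $\mathcal I$.
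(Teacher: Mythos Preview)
Your proof is correct and follows essentially the same approach as the paper: formula \eqref{Eq18} is obtained by unfolding the definition of $\mathcal J=\mathcal I J$ and invoking \eqref{Eq11} from Theorem~\ref{Corollary1}, while \eqref{te56785} follows from Theorem~\ref{Theorem2} combined with \eqref{ukt896p0781}. Your write-up is in fact more explicit than the paper's, which compresses the argument into two sentences citing the same ingredients.
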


\begin{proof}
Formula \eqref{Eq18} follows Theorem \ref{Corollary1} and the definition of $\mathcal J$.  Formula \eqref{te56785} is a consequence of \eqref{ukt896p0781} and Theorem \ref{Theorem2}.
\end{proof}

\begin{corollary}\label{gd6ew6g}
Let $T\in\mathbb S(\mathcal P(\mathcal D'))$. The operator $T$ is invertible if and only if $T1\ne0$. Furthermore, if  $T1\ne0$, then $T^{-1}\in\mathbb S(\mathcal P(\mathcal D'))$.
\end{corollary}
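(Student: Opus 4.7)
The plan is to transport the problem via the algebra isomorphism $J: \mathbb{S}(\mathcal{P}(\mathcal{D}')) \to \mathcal{F}(\mathcal{D}')$ from Theorem \ref{Theorem2}. The key preliminary observation I would make is that $T1$ is actually a real constant and equals the zeroth coordinate of $JT$: shift-invariance applied to the constant polynomial $1$ gives $E(\zeta)(T1) = T(E(\zeta)1) = T1$ for every $\zeta \in \mathcal{D}'$, so $T1$ is translation-invariant and hence a constant polynomial that I identify with a real number, and then formula \eqref{Eq11} at $k = 0$ yields $(JT)^{(0)} = (T1)(0) = T1$.

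The forward implication is then immediate: if $T1 = 0$, linearity forces $T$ to annihilate every constant polynomial, so $T$ is not injective and cannot be invertible in $\mathcal{L}(\mathcal{P}(\mathcal{D}'))$. For the converse, assuming $T1 \neq 0$, I would construct the inverse inside $\mathcal{F}(\mathcal{D}')$ recursively. Writing $F := JT = (F^{(k)})_{k=0}^\infty$ with $F^{(0)} \neq 0$, the equation $F \odot G = \Omega$, unfolded via \eqref{prodonFDprime}, becomes $F^{(0)} G^{(0)} = 1$ together with
\[
G^{(n)} = -\frac{1}{F^{(0)}} \sum_{k=1}^n F^{(k)} \odot G^{(n-k)}, \qquad n \geq 1,
\]
so each $G^{(n)}$ is inductively a well-defined element of $\mathcal{D}'^{\odot n}$ since $\odot$ is defined componentwise on $\mathcal{F}(\mathcal{D}')$. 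Setting $S := J^{-1}(G) \in \mathbb{S}(\mathcal{P}(\mathcal{D}'))$, the algebra isomorphism yields $J(ST) = G \odot F = \Omega = J(\mathbf{1})$, hence $ST = \mathbf{1}$.

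The only delicate point is upgrading this one-sided identity, which is what the construction in $\mathcal{F}(\mathcal{D}')$ directly produces, to a two-sided inverse in $\mathcal{L}(\mathcal{P}(\mathcal{D}'))$. This is resolved at no extra cost by Corollary \ref{Corollary2}: since both $S$ and $T$ lie in $\mathbb{S}(\mathcal{P}(\mathcal{D}'))$, they automatically commute, so $TS = ST = \mathbf{1}$, whence $S = T^{-1}$ in $\mathcal{L}(\mathcal{P}(\mathcal{D}'))$ and in particular $T^{-1} \in \mathbb{S}(\mathcal{P}(\mathcal{D}'))$. Beyond this, the argument is simply the classical fact that a formal power series with nonzero constant term is invertible in its formal power series algebra, lifted to the tensor setting by the identifications provided earlier in Section~\ref{Section2}.
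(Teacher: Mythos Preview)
Your proof is correct and follows essentially the same route as the paper: transport via the algebra isomorphism $J$ to $\mathcal{F}(\mathcal{D}')$, invert the formal series there (the paper cites Proposition~\ref{dr6e7i8o} for this step, you carry out the recursion explicitly), and pull back. Your appeal to Corollary~\ref{Corollary2} for two-sidedness is harmless but unnecessary, since the symmetric tensor product on $\mathcal{F}(\mathcal{D}')$ is commutative, so $F \odot G = G \odot F = \Omega$ already yields $TS = ST = \mathbf{1}$ directly from the isomorphism.
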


\begin{proof} If $T1=0$, then the kernel of $T$ is not equal to $\{0\}$. Hence, $T$ is not invertible.

Assume $T1\ne0$. Let the isomorphism $J$ from Theorem \ref{Theorem2} be constructed through the monomials
 and the corresponding lowering operators
$D(\zeta)$, $\zeta\in\mathcal{D}'$. So formula \eqref{Eq18} becomes
\begin{equation}\label{bhuyfu}
(\mathcal JT)(\xi)=\sum_{n=0}^\infty\frac{1}{n!}\left(T\langle \cdot^{\otimes n},\xi^{\otimes n}\rangle\right)(0).\end{equation}
Since $T1\ne0$, formula \eqref{bhuyfu}, Corollary \ref{65er5i458o55}, and Proposition \ref{dr6e7i8o} imply the existence of an operator $S\in\mathbb S(\mathcal P(\mathcal D'))$ such that $ST=TS=\mathbf 1$. Hence, the operator $T$ is invertible and $T^{-1}=S\in \mathbb S(\mathcal P(\mathcal D'))$.
\end{proof}

\begin{proof}[Proof of $\mathrm{(BT3)}\Rightarrow \mathrm{(BT4)}$] Let the isomorphism $J$ from Theorem \ref{Theorem2} be constructed through the monomials
 and the corresponding lowering operators
$D(\zeta)$, $\zeta\in\mathcal{D}'$. By \eqref{bhuyfu},
\[
(\mathcal JD(\zeta))(\xi)=\langle\zeta,\xi\rangle.
\]
Thus, by Lemma \ref{Lemma4} and the isomorphism theorem,
\begin{equation}
(\mathcal JE(\zeta))(\xi)=\sum_{n=0}^\infty \frac{1}{n!}(\mathcal JD(\zeta)^n)(\xi)
=\sum_{n=0}^\infty \frac{1}{n!}\langle\zeta^{\otimes n},\xi^{\otimes n}\rangle
=\exp[\langle\zeta,\xi\rangle].\label{Eq20}
\end{equation}

Let $G^{(k)}\in\mathcal D'^{\odot k}$, $k\in\mathbb N$.
Then formula \eqref{bhuyfu} with
$$T=\big\langle G^{(k)}(x_1,\dots,x_k), D(x_1)\dotsm D(x_k)\big\rangle$$
yields
$$(\mathcal J\big\langle G^{(k)}(x_1,\dots,x_k), D(x_1)\dotsm D(x_k)\big\rangle)(\xi)
=\langle G^{(k)},\xi^{\otimes k}\rangle.$$
Therefore, condition (BT3) gives, for each $\zeta\in\mathcal D'$,
\begin{equation}\label{hgyut8tyt}
(\mathcal JQ(\zeta))(\xi)=\sum_{k=1}^\infty\frac{1}{k!}\langle B_k\zeta,\xi^{\otimes k}
\rangle=\sum_{k=1}^\infty\frac{1}{k!}\langle\zeta,R_{1,k}\xi^{\otimes k}\rangle.
\end{equation}
In the latter equality we used the fact that $R_{1,k}^*=B_k$, see the proof of $\mathrm{(BT2)}\Rightarrow \mathrm{(BT3)}$.
Choosing in \eqref{hgyut8tyt} $\zeta=\delta_x$, $x\in\R^d$, we obtain
\[
(\mathcal JQ(x))(\xi)=\sum_{k=1}^\infty\frac{1}{k!}\left(R_{1,k}\xi^{\otimes k}\right)(x),
\]
and, more generally, by Theorem \ref{Theorem2}, for any
$x_1,\ldots,x_k\in\R^d$, $k\in\N$,
\begin{equation}\label{iugit8t}
(\mathcal JQ(x_1)\dotsm Q(x_k))(\xi)=\prod_{i=1}^k\left(\sum_{n=1}^\infty\frac{1}{n!}\left(R_{1,n}\xi^{\otimes n}\right)(x_i)\right).
\end{equation}

By \eqref{bhuyfu} and \eqref{iugit8t}, we get, for each $G^{(k)}\in\mathcal D'^{\odot k}$, $k\in\mathbb N$,
\begin{align}
&\big(\mathcal J\langle G^{(k)}(x_1,\ldots,x_k),Q(x_1)\dotsm Q(x_k)\rangle\big)(\xi)\notag\\
&\quad=\big\langle G^{(k)}(x_1,\ldots,x_k),\mathcal J(Q(x_1)\dotsm Q(x_k))(\xi)\big\rangle\notag\\
&\quad=\left\langle G^{(k)}(x_1,\ldots,x_k),\prod_{i=1}^k\left(\sum_{n=1}^\infty\frac{1}{n!}\left(R_{1,n}\xi^{\otimes n}\right)(x_i)\right)\right\rangle\notag\\
&\quad=\left\langle G^{(k)},\left(\sum_{n=1}^\infty\frac{1}{n!}\,R_{1,n}\xi^{\otimes n}\right)^{\otimes k}\right\rangle.\label{ihgigit}
\end{align}

By \eqref{gytt7r} and  \eqref{ihgigit},
\begin{equation}
(\mathcal JE(\zeta))(\xi)=\sum_{k=0}^\infty\frac{1}{k!}
\left\langle P^{(k)}(\zeta), \left(\sum_{n=1}^\infty\frac{1}{n!}R_{1,n}\xi^{\otimes n}\right)^{\otimes k}\right\rangle.\label{Eq21}
\end{equation}
Formulas \eqref{Eq20} and \eqref{Eq21} imply
\begin{equation}
\sum_{k=0}^\infty\frac{1}{k!}
\left\langle P^{(k)}(\zeta), \left(\sum_{n=1}^\infty\frac{1}{n!}R_{1,n}\xi^{\otimes n}\right)^{\otimes k}\right\rangle=\exp[\langle\zeta,\xi\rangle].\label{Eq22}
\end{equation}

By Proposition \ref{rw4w4w5}, we find the compositional inverse $A(\xi)=\sum_{k=1}^\infty A_k\xi^{\otimes k}\in \mathcal S(\mathcal D,\mathcal D)$ of $\sum_{n=1}^\infty\frac{1}{n!}R_{1,n}\xi^{\otimes n}\in \mathcal S(\mathcal D,\mathcal D)$, and by  Remark~\ref{yrei4vvv}, we have $A_1=\mathbf 1$. 
Formula~\eqref{Eq23} now  follows from  \eqref{Eq22} and Proposition~\ref{yde57o5}.
\end{proof}

\begin{remark} It follows from the proof of Proposition \ref{rw4w4w5} that, for $k\ge 2$, the operators $A_k$  are given by the recurrence formula
\begin{equation}\label{buf7r8o}
A_k=-\sum_{n=2}^k \frac1{n!}\,R_{1,n}\sum_{\substack{(l_1,\dots,l_n)\in\mathbb N^n\\ l_1+\dots+ l_n=k}}A_{l_1}\odot\dots\odot A_{l_n}.\end{equation}

\end{remark}

\begin{proof}[Proof of $\mathrm{(BT4)}\Rightarrow \mathrm{(BT1)}$] By (BT4), we have, for any $\omega,\zeta\in\mathcal D'$,
\begin{align*}
\sum_{n=0}^\infty\frac{1}{n!}\langle P^{(n)}(\omega+\zeta),\xi^{\otimes n}\rangle&=\left(\sum_{n=0}^\infty\frac{1}{n!}\langle P^{(n)}(\omega),\xi^{\otimes n}\rangle\right)\left(\sum_{k=0}^\infty\frac{1}{k!}\langle P^{(k)}(\zeta),\xi^{\otimes k}\rangle\right)\\
&=\sum_{n=0}^\infty\frac{1}{n!}\left\langle\sum_{k=0}^n\binom{n}{k}P^{(k)}(\omega)\odot P^{(n-k)}(\zeta),\xi^{\otimes n}\right\rangle,
\end{align*}
which implies (BT1). This concludes the proof of Theorem~\ref{Theorem1}.
\end{proof}

\begin{definition}
Let $\left(Q(\zeta)\right)_{\zeta\in\mathcal{D}'}$ be a family of operators from $\mathcal L(\mathcal P(\mathcal D'))$. We  say that  $\left(Q(\zeta)\right)_{\zeta\in\mathcal{D}'}$ is a {\it family of delta
operators} if the following conditions are satisfied:

\begin{enumerate}
\item[(i)] Each $Q(\zeta)$ is shift-invariant;
\item[(ii)] For each $\zeta\in\mathcal{D}'$ and each $\xi\in\mathcal{D}$,
\begin{equation}\label{bhfu7frf7fr}
Q(\zeta)\langle\cdot,\xi\rangle=\langle\zeta,\xi\rangle;
\end{equation}
\item[(iii)] $Q(\zeta)$ linearly depends on $\zeta\in\mathcal{D}'$.
Furthermore, for each $k\geq 2$, the mapping
$\mathcal{D}'\ni\zeta\mapsto B_k\zeta\in\mathcal{D}'^{\odot k}$
defined by
\begin{equation}\label{hgvuft7fr}
\langle B_k\zeta,f^{(k)}\rangle:=\left(Q(\zeta)\langle\cdot^{\otimes k},f^{(k)}\rangle\right)(0),\quad f^{(k)}\in\mathcal{D}^{\odot k},
\end{equation}
belongs to $\mathcal{L}(\mathcal{D}',\mathcal{D}'^{\odot k})$.
\end{enumerate}
\end{definition}

It is a straightforward consequence of Theorem \ref{Theorem1} that, for any monic polynomial sequence
$(P^{(n)})_{n=0}^\infty$  of binomial type, the corresponding family
$\left(Q(\zeta)\right)_{\zeta\in\mathcal{D}'}$ of lowering operators is a family
of delta operators.

\begin{proposition}\label{Corollary3}
Let $\left(Q(\zeta)\right)_{\zeta\in\mathcal{D}'}$ be a family of delta operators.
Then, there exists a unique monic polynomial sequence $(P^{(n)})_{n=0}^\infty$ of binomial type for which
$\left(Q(\zeta)\right)_{\zeta\in\mathcal{D}'}$ is the family of lowering operators.
\end{proposition}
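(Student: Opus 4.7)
The plan is to invert the passage from binomial sequences to delta operators established in Theorem~\ref{Theorem1}. I first expand each $Q(\zeta)$ via the operator expansion theorem (Theorem~\ref{Corollary1}) applied to the monomial sequence, whose lowering operators are $(D(\eta))_{\eta\in\mathcal D'}$. Shift-invariance of $Q(\zeta)$ (condition~(i)) produces coefficients $G^{(k)}(\zeta,\cdot)\in\mathcal D'{}^{\odot k}$ with $\langle G^{(k)}(\zeta,\cdot),f^{(k)}\rangle=(Q(\zeta)\langle\cdot^{\otimes k},f^{(k)}\rangle)(0)$, so that $Q(\zeta)=\sum_{k=0}^\infty\tfrac{1}{k!}\langle G^{(k)}(\zeta,\cdot),D(x_1)\cdots D(x_k)\rangle$. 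Applying this expansion to $\langle\cdot,\xi\rangle$ yields $(Q(\zeta)\langle\cdot,\xi\rangle)(\omega)=G^{(0)}(\zeta)\langle\omega,\xi\rangle+\langle G^{(1)}(\zeta,\cdot),\xi\rangle$; comparison with condition~(ii), which forces the result to be the constant $\langle\zeta,\xi\rangle$, gives $G^{(0)}(\zeta)=0$ and $G^{(1)}(\zeta,\cdot)=\zeta$. For $k\ge 2$, condition~(iii) identifies $G^{(k)}(\zeta,\cdot)=B_k\zeta$. Setting $B_1:=\mathbf 1$, I therefore obtain the canonical representation
\begin{equation}
Q(\zeta)=\sum_{k=1}^\infty\frac{1}{k!}\bigl\langle (B_k\zeta)(x_1,\ldots,x_k),D(x_1)\cdots D(x_k)\bigr\rangle. \label{Qexp}
\end{equation}

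Next I form the $\mathcal D$-valued tensor power series $B(\xi):=\sum_{k=1}^\infty\tfrac{1}{k!}B_k^*\xi^{\otimes k}\in\mathcal S(\mathcal D,\mathcal D)$, whose leading term is $\xi$. By Proposition~\ref{rw4w4w5} and Remark~\ref{yrei4vvv} in the appendix, $B(\xi)$ admits a compositional inverse $A(\xi)=\sum_{k=1}^\infty A_k\xi^{\otimes k}\in\mathcal S(\mathcal D,\mathcal D)$ with $A_1=\mathbf 1$. Corollary~\ref{cds5w6} then supplies a monic polynomial sequence $(P^{(n)})_{n=0}^\infty$ on $\mathcal D'$ of binomial type whose generating function is $\exp[\langle\omega,A(\xi)\rangle]$.

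To finish existence I verify that the lowering operators $\widetilde Q(\zeta)$ of $(P^{(n)})$ coincide with the prescribed $Q(\zeta)$. Theorem~\ref{Theorem1} applied to $(P^{(n)})$ yields the representation $\widetilde Q(\zeta)=\sum_{k=1}^\infty\tfrac{1}{k!}\langle\widetilde B_k\zeta,D(x_1)\cdots D(x_k)\rangle$ with $\widetilde B_k=\widetilde R_{1,k}^*$; moreover, re-running the proof of $\mathrm{(BT3)}\Rightarrow\mathrm{(BT4)}$ shows that $A(\xi)$ is the compositional inverse of $\widetilde B(\xi):=\sum_{k=1}^\infty\tfrac{1}{k!}\widetilde B_k^*\xi^{\otimes k}$. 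Since by construction $A(\xi)$ is also the compositional inverse of $B(\xi)$, the uniqueness of compositional inverses in $\mathcal S(\mathcal D,\mathcal D)$ forces $\widetilde B(\xi)=B(\xi)$, hence $\widetilde B_k=B_k$ for every $k\ge 1$; comparing with \eqref{Qexp} gives $\widetilde Q(\zeta)=Q(\zeta)$ for all $\zeta\in\mathcal D'$.

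Uniqueness of $(P^{(n)})$ is immediate: any such sequence determines the same coefficients $B_k$ via \eqref{hgvuft7fr}, hence the same compositional inverse $A(\xi)$, and by $\mathrm{(BT4)}$ has generating function $\exp[\langle\omega,A(\xi)\rangle]$, which Corollary~\ref{cds5w6} says determines the sequence. The main technical point is the uniqueness of the compositional inverse in $\mathcal S(\mathcal D,\mathcal D)$; this should be encoded by the recursion \eqref{buf7r8o} in the appendix, after which the rest is bookkeeping matching the two expansions of $Q(\zeta)$ and $\widetilde Q(\zeta)$.
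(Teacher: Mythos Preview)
Your proof is correct and follows essentially the same route as the paper: expand $Q(\zeta)$ via Theorem~\ref{Corollary1} against the monomial sequence to extract the operators $B_k$, take the compositional inverse $A(\xi)$ of $B(\xi)=\sum_{k\ge1}\tfrac{1}{k!}B_k^*\xi^{\otimes k}$, and invoke Corollary~\ref{cds5w6} together with Remark~\ref{bhgyutfu} (uniqueness of the compositional inverse) to obtain and identify $(P^{(n)})$. The only cosmetic difference is that the paper first shows $Q(\zeta)1=0$ directly from shift-invariance, whereas you read off $G^{(0)}(\zeta)=0$ from the expansion applied to $\langle\cdot,\xi\rangle$; and you spell out the matching $\widetilde Q(\zeta)=Q(\zeta)$ that the paper compresses into the citation of Remark~\ref{bhgyutfu}.
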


\begin{proof}
Let $\zeta,\omega\in\mathcal{D}'$ and $\xi\in \mathcal D$ . By shift-invariance of $Q(\zeta)$ and \eqref{bhfu7frf7fr}, we get
\begin{align*}
\langle\zeta,\xi\rangle&=E(\omega)\langle\zeta,\xi\rangle
=E(\omega)Q(\zeta)\langle\cdot,\xi\rangle
=Q(\zeta)E(\omega)\langle\cdot,\xi\rangle\\
&=Q(\zeta)\langle\cdot+\omega,\xi\rangle
=Q(\zeta)\langle\cdot,\xi\rangle+\langle\omega,\xi\rangle Q(\zeta)1=\langle\zeta,\xi\rangle+\langle\omega,\xi\rangle Q(\zeta)1,
\end{align*}
which implies that $Q(\zeta)1=0$. Hence, by Theorem \ref{Corollary1}, \eqref{bhfu7frf7fr}, and \eqref{hgvuft7fr}, we have
\[
Q(\zeta)=\sum_{k=1}^\infty\frac{1}{k!}\big\langle (B_k\zeta)(x_1,\ldots,x_k),D(x_1)\dotsm D(x_k)\rangle,
\]
with $B_1:=\mathbf 1$, the identity operator on  $\mathcal D'$.

Let $A_1:=\mathbf 1$ be the identity operator on  $\mathcal D$, and for $k\ge2$, let the operators $A_k\in\mathcal L(\mathcal D^{\odot k},\mathcal D)$ be defined by the recurrence formula \eqref{buf7r8o} with $R_{1,k}:=B_k^*$. Thus, $A(\xi)=\sum_{k=1}^\infty A_k\xi^{\otimes k}\in \mathcal S(\mathcal D,\mathcal D)$ is the compositional inverse of $\sum_{n=1}^\infty\frac{1}{n!}B_n^*\xi^{\otimes n}\in \mathcal S(\mathcal D,\mathcal D)$

Let $(P^{(n)})_{n=0}^\infty$ be the monic polynomial sequence  on $\mathcal D'$ of binomial type that has the generating function~\eqref{Eq23} with $A(\xi)$ given by \eqref{ft7r}, see Corollary~\ref{cds5w6}.
By Remark~\ref{bhgyutfu}, $(P^{(n)})_{n=0}^\infty$  is the unique required polynomial sequence.   \end{proof}

\begin{definition}
Let $\left(Q(\zeta)\right)_{\zeta\in\mathcal{D}'}$  be a family of delta operators.
The corresponding monic polynomial sequence $(P^{(n)})_{n=0}^\infty$ of binomial type given by Proposition \ref{Corollary3} is called the {\it basic sequence for $\left(Q(\zeta)\right)_{\zeta\in\mathcal{D}'}$}.
\end{definition}

\begin{proposition}\label{jiooyu}
 $\left(Q(\zeta)\right)_{\zeta\in\mathcal{D}'}$ is a family of delta operators if and only if
 there exists  a sequence $(B_k)_{k=1}^\infty$, with
$B_k\in\mathcal{L}(\mathcal{D}',\mathcal{D}'^{\odot  k})$, such that  $B_1=\mathbf 1$ and \eqref{tfer67} holds.
\end{proposition}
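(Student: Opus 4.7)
The plan is to apply the operator expansion theorem (Theorem \ref{Corollary1}) in the special setting where the underlying monic polynomial sequence of binomial type consists of the monomials $P^{(n)}(\omega) = \omega^{\otimes n}$, whose lowering operators are the directional derivatives $D(\zeta)$. This is precisely the device already used in the proof of $\mathrm{(BT2)} \Rightarrow \mathrm{(BT3)}$ to derive the representation \eqref{Eq15}, so the main work is to re-deploy it under the weaker hypothesis that $(Q(\zeta))_{\zeta \in \mathcal{D}'}$ is merely a family of delta operators (without knowing \textit{a priori} that the $Q(\zeta)$'s come from a polynomial sequence of binomial type).

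For the ``only if'' direction, suppose $(Q(\zeta))_{\zeta \in \mathcal{D}'}$ is a family of delta operators. Each $Q(\zeta)$ is shift-invariant and continuous, so by Theorem \ref{Corollary1} there is a unique sequence $(G^{(k)}_\zeta)_{k=0}^\infty \in \mathcal{F}(\mathcal{D}')$ with
$$Q(\zeta) = \sum_{k=0}^\infty \frac{1}{k!}\big\langle G^{(k)}_\zeta(x_1,\dots,x_k), D(x_1)\dotsm D(x_k)\big\rangle,$$
and $\langle G^{(k)}_\zeta, f^{(k)}\rangle = (Q(\zeta)\langle \cdot^{\otimes k}, f^{(k)}\rangle)(0)$. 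The short computation in the proof of Proposition \ref{Corollary3}, which uses only shift-invariance and condition (ii) of the delta operator definition, forces $Q(\zeta) 1 = 0$, so $G^{(0)}_\zeta = 0$. Condition (ii) then yields $G^{(1)}_\zeta = \zeta$, i.e., $B_1 = \mathbf{1}$, while condition (iii) together with \eqref{hgvuft7fr} identifies $G^{(k)}_\zeta = B_k\zeta$ with $B_k \in \mathcal{L}(\mathcal{D}', \mathcal{D}'^{\odot k})$ for each $k \geq 2$. This gives \eqref{tfer67}.

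For the ``if'' direction, assume the representation \eqref{tfer67} holds. Applied to any polynomial, the sum is finite, so each $Q(\zeta)$ is a well-defined linear operator on $\mathcal{P}(\mathcal{D}')$; continuity in $\mathcal{P}(\mathcal{D}')$ follows from the continuity of the $B_k$'s and of the derivatives $D(x_i)$. Shift-invariance is immediate since each $D(x_i)$ commutes with every $E(\omega)$. To verify condition (ii), note that $D(x_1)\langle \cdot, \xi\rangle = \xi(x_1)$ is a constant, so all higher derivatives vanish and only the $k=1$ term survives, producing $\langle B_1\zeta, \xi\rangle = \langle \zeta, \xi\rangle$. For condition (iii), linear dependence of $Q(\zeta)$ on $\zeta$ is visible from the formula; to check that the $B_k$'s recovered via \eqref{hgvuft7fr} coincide with those appearing in \eqref{tfer67}, one computes $\big(Q(\zeta)\langle \cdot^{\otimes k}, f^{(k)}\rangle\big)(0)$: terms in the sum with $j > k$ contribute zero since the $j$-fold derivative of a degree-$k$ polynomial vanishes, and terms with $j < k$ yield polynomials of positive degree that vanish at $0$, so only the $j = k$ term survives and gives exactly $\langle B_k\zeta, f^{(k)}\rangle$.

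The main obstacle, if there is one, is the bookkeeping in this last identification: one must confirm that the operators $B_k$ that appear as ``coefficients'' in \eqref{tfer67} are literally the same operators recovered by the prescription \eqref{hgvuft7fr} that defines ``delta operator''. Once this is in place, Theorem \ref{Corollary1} does all of the structural work on the forward direction, and conditions (i)--(iii) fall out term by term on the reverse.
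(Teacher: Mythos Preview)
Your proof is correct. The paper's own proof is a single sentence: ``The statement follows immediately from Theorem~\ref{Theorem1} and Proposition~\ref{Corollary3}.'' The logic there is that, by the remark preceding Proposition~\ref{Corollary3} together with Proposition~\ref{Corollary3} itself, a family $(Q(\zeta))_{\zeta\in\mathcal D'}$ is a family of delta operators if and only if it is the family of lowering operators of some monic polynomial sequence of binomial type; the equivalence $\mathrm{(BT1)}\Leftrightarrow\mathrm{(BT3)}$ in Theorem~\ref{Theorem1} then says this happens if and only if the representation \eqref{tfer67} holds.

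Your route is more direct: instead of passing through an auxiliary polynomial sequence of binomial type, you apply the operator expansion theorem (Theorem~\ref{Corollary1}) for the monomials and verify the three defining conditions of a delta operator by hand. This is essentially the content of the $\mathrm{(BT2)}\Rightarrow\mathrm{(BT3)}$ argument redone under the weaker hypothesis, plus an explicit check of the converse. What you gain is a self-contained argument that does not invoke Proposition~\ref{Corollary3} (you use only the short computation $Q(\zeta)1=0$ from its proof) or the full equivalence in Theorem~\ref{Theorem1}; what the paper gains is brevity, since all the work has already been packaged into those two results.
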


\begin{proof}The statement follows immediately from Theorem \ref{Theorem1} and Proposition~\ref{Corollary3}. \end{proof}

\section{Lifting of polynomials on $\R$ of binomial type}\label{Subsection3.2}

Let $(p_n)_{n=0}^\infty$ be a monic polynomial sequence  on $\R$ of binomial
type, and let $Q$ be its delta operator, that is, $Qp_n=np_{n-1}$ for each
$n\in\N_0$. According to the one-dimensional (classical) version of Theorem \ref{Theorem1}
(see e.g.~\cite{KRY}), $Q$ has a formal expansion
\begin{equation}\label{u8tr85t}
Q=\sum_{k=1}^\infty \frac{b_k}{k!}\,D^k=q(D),
\end{equation}
where $(b_k)_{k\in\N}$ is a sequence of real numbers such that $b_1=1$, $D$ is
the differentiation operator and
\begin{equation}\label{cdufti7wrt}
q(t):=\sum_{k=1}^\infty \frac{b_k}{k!}\,t^k
\end{equation}
is a formal power series in $t\in\R$. Furthermore,
\begin{equation}\label{ftuc}
\sum_{n=0}^\infty\frac{u^n}{n!}p_n(t)=\exp[ta(u)],
\end{equation}
where the formal power series in $u\in\R$,
\begin{equation}\label{uytr6d}
a(u)=\sum_{k=1}^\infty a_k u^k
\end{equation}
is the compositional inverse of $q$. In particular, $a_1=1$.
We will now lift the sequence of polynomials $(p_n)_{n=0}^\infty$ to a monic polynomial sequence  on $\mathcal{D}'$ of binomial type.

For each $k\in\mathbb N$, we define an operator $\mathbb D_k\in\mathcal L(\mathcal D^{\odot k},\mathcal D)$ by
\begin{equation}\label{sew46u}
(\mathbb D_kf^{(k)})(x):=f^{(k)}(x,\dots,x),\quad f^{(k)}\in\mathcal D^{\odot k},\ x\in\R^d\end{equation}
($\mathbb D_1$ being the identity operator on  $\mathcal D$).
The adjoint operator $\mathbb D_k^*\in\mathcal L(\mathcal D', \mathcal D'{}^{\odot k})$ satisfies
$$\langle \mathbb D_k^*\zeta,f^{(k)}\rangle=\langle \zeta(x),f^{(k)}(x,\dots,x)\rangle,\quad \zeta\in\mathcal D',\ f^{(k)}\in\mathcal D^{\odot k}.$$
In particular,
\begin{equation}\label{vytfyfr}
\langle \mathbb D_k^*\zeta,\xi^{\otimes k}\rangle=\langle\zeta,\xi^k\rangle, \quad \zeta\in\mathcal D',\ \xi\in\mathcal D.
\end{equation}
We now define an operator $B_k:\mathcal D'\to \mathcal D'{}^{\odot k}$ by
\begin{equation}\label{gutfr7tr7}
 B_k:=b_k \mathbb D_k^*,\quad k\in\mathbb N,\end{equation}
where the numbers $b_k$ are as in \eqref{u8tr85t}.
Let $(Q(\zeta))_{\zeta\in\mathcal D'}$ be the family of delta operators given by \eqref{tfer67}, see Proposition~\ref{jiooyu}.
By \eqref{vytfyfr} and \eqref{gutfr7tr7}, we then have
\begin{equation}\label{bgugfu8t}
Q(x)=\sum_{k=1}^\infty \frac{b_k}{k!}\,D^k(x)=q(D(x)),\quad x\in\R^d,
\end{equation}
and moreover,
\begin{equation}\label{gyutru8t8}
Q(\zeta)=\bigg\langle \zeta(x),\sum_{k=1}^\infty\frac{b_k}{k!}\,D^k(x)\bigg\rangle=\big\langle \zeta(x),q(D(x))\big\rangle.\end{equation}
Let $(P^{(n)})_{n=0}^\infty$ be the basic sequence for $(Q(\zeta))_{\zeta\in\mathcal D'}$. Thus, in view of \eqref{u8tr85t} and \eqref{bgugfu8t}, we may think of $(P^{(n)})_{n=0}^\infty$ as the {\it lifting of the monic polynomial sequence  $(p_n)_{n=0}^\infty$ of binomial type}.

As easily seen, the generating function of $(P^{(n)})_{n=0}^\infty$ is given by \eqref{Eq23} with the operators $A_k\in\mathcal L(\mathcal D^{\odot k}, \mathcal D)$ given by
$$ A_k=a_k\mathbb D_k,\quad k\in\mathbb N,$$
where the numbers $a_k$ are as in \eqref{uytr6d}.
Therefore, by \eqref{ft7r},
\begin{equation}\label{yte6790p0}
A(\xi)=\sum_{k=1}^\infty a_k\xi^k=a(\xi).\end{equation}
Hence,
\begin{equation}\label{bhgyut67rt}
\sum_{n=0}^\infty\frac{1}{n!}\langle P^{(n)}(\omega),\xi^{\otimes n}\rangle
=\exp\left[\left\langle\omega(x),\sum_{k=1}^\infty a_k\xi^k(x)\right\rangle\right]
=\exp\big[\langle\omega,a(\xi)\rangle\big].
\end{equation}
Thus, this generating function can be though of as the lifting of the generating function~\eqref{ftuc}.

Recall that a {\it set partition $\pi$ of a set $\mathcal X\ne\varnothing$}
 is an (unordered) collection of disjoint nonempty subsets of $\mathcal X$ whose union equals $\mathcal X$. We denote by $\mathfrak P(n)$ the collection of all  set partitions of $
 \mathcal X=\{1,2,\dots,n\}$.  For a set $B\subset\{1,\dots,n\}$, we denote by $|B|$ the cardinality of $B$.

\begin{proposition}\label{Proposition2}
Let $(P^{(n)})_{n=0}^\infty$ be the monic polynomial sequence on $\mathcal D'$  of binomial type that has the generating function  \eqref{bhgyut67rt}.
For $k\in\N$, denote $\alpha_k:=a_kk!$, so that
\begin{equation}\label{tydfrdrs5s}
a(u)=\sum_{k=1}^\infty\frac{\alpha_k}{k!}\,u^k.\end{equation}
Then, for any $n\in\mathbb N$,  $\omega\in\mathcal D'$, and $\xi\in\mathcal D$,
\begin{equation}
\langle P^{(n)}(\omega),\xi^{\otimes n}\rangle=\sum_{\pi\in\mathfrak P(n)}\prod_{B\in\pi}\alpha_{|B|}\langle \omega,\xi^{|B|}\rangle,
\label{yrd6iei}\end{equation}
or equivalently
\begin{multline}\label{6rr754}
P^{(n)}(\omega)=\sum_{k=1}^n
\sum_{\{B_1,B_2,\dots,B_k\}\in\mathfrak P(n)}
\alpha_{|B_1|}\alpha_{|B_1|}\dotsm \alpha_{|B_k|}\\
\times\big(\mathbb D^*_{|B_1|}\omega^{\otimes|B_1|}\big)\odot \big(\mathbb D^*_{|B_2|}\omega^{\otimes|B_2|}\big)\odot\dotsm \odot \big(\mathbb D^*_{|B_k|}\omega^{\otimes|B_k|}\big).
\end{multline}
\end{proposition}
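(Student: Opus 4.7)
The plan is to verify the scalar identity \eqref{yrd6iei} by directly expanding the generating function \eqref{bhgyut67rt} as a formal power series in $\xi$, and then to deduce \eqref{6rr754} from \eqref{yrd6iei} using the defining relation \eqref{vytfyfr} for $\mathbb D^*_k$ together with Lemma \ref{yde6ue6cb}(i). Since the generating function \eqref{bhgyut67rt} has already been established, this is a matter of combinatorial bookkeeping combined with one density argument.

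First I would expand the right-hand side of \eqref{bhgyut67rt}. Substituting $\langle\omega,a(\xi)\rangle=\sum_{j=1}^\infty (\alpha_j/j!)\langle\omega,\xi^j\rangle$ and using the multinomial expansion of $\langle\omega,a(\xi)\rangle^k$ yields the identity in $\mathcal S(\mathcal D,\R)$
\[
\exp[\langle\omega,a(\xi)\rangle]
=\sum_{k=0}^\infty\frac{1}{k!}\sum_{(j_1,\dots,j_k)\in\N^k}\prod_{i=1}^k\frac{\alpha_{j_i}}{j_i!}\langle\omega,\xi^{j_i}\rangle.
\]
Collecting terms of fixed total degree $n=j_1+\dots+j_k$ and matching with the left-hand side of \eqref{bhgyut67rt}, which is legitimate by Definition \ref{es5w46u3} and Lemma \ref{yde6ue6cb}(i), gives
\[
\langle P^{(n)}(\omega),\xi^{\otimes n}\rangle
=\sum_{k=1}^{n}\frac{n!}{k!}\sum_{\substack{(j_1,\dots,j_k)\in\N^k\\ j_1+\dots+j_k=n}}\prod_{i=1}^k\frac{\alpha_{j_i}}{j_i!}\langle\omega,\xi^{j_i}\rangle.
\]

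Next I would convert this ordered-composition sum into a set-partition sum via the classical exponential-formula bookkeeping: the number of ordered set partitions of $\{1,\dots,n\}$ into $k$ blocks of prescribed sizes $(j_1,\dots,j_k)$ equals the multinomial coefficient $n!/(j_1!\cdots j_k!)$, and each unordered set partition with $k$ blocks is the image of exactly $k!$ ordered ones. Inserting this into the previous display and reversing the order of summation produces exactly $\sum_{\pi\in\mathfrak P(n)}\prod_{B\in\pi}\alpha_{|B|}\langle\omega,\xi^{|B|}\rangle$, which establishes \eqref{yrd6iei}. This step is essentially the classical Bell/Faà di Bruno identity, with the scalar variable replaced by the linear functional $j\mapsto\langle\omega,\xi^j\rangle$, so no new analytic input is needed.

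Finally, to derive \eqref{6rr754}, I would use \eqref{vytfyfr} to rewrite $\langle\omega,\xi^{|B|}\rangle=\langle\mathbb D^*_{|B|}\omega,\xi^{\otimes|B|}\rangle$ for each block $B$, and then invoke the factorization \eqref{897y6t} of the pairing against a symmetric tensor power to obtain
\[
\prod_{i=1}^k\alpha_{|B_i|}\langle\omega,\xi^{|B_i|}\rangle
=\bigl\langle \alpha_{|B_1|}\cdots\alpha_{|B_k|}\,(\mathbb D^*_{|B_1|}\omega)\odot\cdots\odot(\mathbb D^*_{|B_k|}\omega),\,\xi^{\otimes n}\bigr\rangle
\]
for every $\{B_1,\dots,B_k\}\in\mathfrak P(n)$. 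Summing over $\pi\in\mathfrak P(n)$ and applying Lemma \ref{yde6ue6cb}(i) once more to strip the pairing with $\xi^{\otimes n}$ identifies $P^{(n)}(\omega)$ with the element of $\mathcal D'^{\odot n}$ claimed in \eqref{6rr754}. I do not anticipate any substantive obstacle; the only delicate point is correctly tracking the factorials $n!$, $k!$ and $j_i!$ in the passage between ordered compositions and unordered set partitions, but this is entirely standard.
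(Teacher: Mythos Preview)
Your proposal is correct and follows essentially the same route as the paper: expand the exponential generating function \eqref{bhgyut67rt} to obtain the ordered-composition sum $\sum_{k}\frac{n!}{k!}\sum_{i_1+\dots+i_k=n}a_{i_1}\cdots a_{i_k}\langle\omega,\xi^{i_1}\rangle\cdots\langle\omega,\xi^{i_k}\rangle$, then convert to the set-partition form via the standard Fa\`a di Bruno / Bell-polynomial bookkeeping. The paper passes through the intermediate multiplicity representation $\frac{n!}{\prod j_m!\,(m!)^{j_m}}$ rather than your direct multinomial/$k!$ counting, and it leaves the passage from \eqref{yrd6iei} to \eqref{6rr754} implicit where you spell it out via \eqref{vytfyfr}, \eqref{897y6t} and Lemma~\ref{yde6ue6cb}(i); these are cosmetic differences only.
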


\begin{proof}
It follows immediately from the form of the generating function \eqref{bhgyut67rt} that
\begin{align*}\langle P^{(n)}(\omega),\xi^{\otimes n}\rangle&=\sum_{k=1}^n\frac{n!}{k!}
\displaystyle\sum_{\substack{(i_1,\dots,i_k)\in\N^k\\ i_1+\dots +i_k=n}}a_{i_1}\dotsm a_{i_k}\langle\omega,\xi^{i_1}\rangle\dotsm \langle\omega,\xi^{i_k}\rangle\\
&=\sum_{\substack{(j_1,j_2\dots,j_n)\in\N_0^n\\j_1+2j_2+\dots+nj_n=n}}\frac{n!}{j_1!j_2!\dotsm j_n!\,(1!)^{j_1}(2!)^{j_2}\dotsm(n!)^{j_n}}\,\alpha_1^{j_1}\alpha_2^{j_2}\dotsm \alpha_n^{j_n}\notag\\
&\quad\times \langle \omega,\xi\rangle^{j_1} \langle \omega,\xi^2\rangle^{j_2}\dotsm \langle \omega,\xi^n\rangle^{j_n}.
\end{align*}
which implies \eqref{yrd6iei}, hence also \eqref{6rr754}.
\end{proof}

We denote by $\mathbb M(\R^d)$ the space  of all signed Radon measures on $\R^d$, i.e., the set of all signed measures $\eta$ on $(\R^d,\mathcal B(\R^d))$ such that $|\eta|(\Lambda)<\infty$ for all  $\Lambda\in\mathcal B_0(\R^d)$. Here $\mathcal B(\R^d)$ denotes the Borel $\sigma$-algebra on $\R^d$,   $\mathcal B_0(\R^d)$ denotes the collection of all bounded sets $\Lambda\in \mathcal B(\R^d)$, and  $|\eta|$ denotes the variation of $\eta\in \mathbb M(\R^d)$.

Further, let $\mathcal B_{\mathrm{sym}}((\R^d)^n)$ denote the sub-$\sigma$-algebra of $\mathcal B((\R^d)^n)$ that consists of all symmetric sets $\Delta\in \mathcal B((\R^d)^n)$, i.e., for each permutation $\sigma\in\mathfrak S(n)$, $\Delta$ is an invariant set for the mapping
$$(\R^d)^n\ni(x_1,\dots,x_n)\mapsto (x_{\sigma(1)},\dots,x_{\sigma(n)})\in(\R^d)^n.$$
We denote by $\mathbb M_{\mathrm{sym}}((\R^d)^n)$ the space of all signed Radon measures on $((\R^d)^n, \mathcal B_{\mathrm{sym}}((\R^d)^n))$.

\begin{corollary}\label{f675ew}
 Let $(P^{(n)})_{n=0}^\infty$ be the monic polynomial sequence on $\mathcal D'$  of binomial type that has the generating function  \eqref{bhgyut67rt}.  Then, for each $\eta\in \mathbb M(\R^d)$ and $n\in\mathbb N$, we have $P^{(n)}(\eta)\in\mathbb M_{\mathrm{sym}}((\R^d)^n)$. Furthermore, for each $\Lambda\in\mathcal B_0(\R^d)$,
\begin{equation}\label{vt7rei7549cd}
\big(P^{(n)}(\eta)\big)(\Lambda^n)=p_n(\eta(\Lambda)), \quad n\in\mathbb N,\end{equation}
where $(p_n)_{n=0}^\infty$ is the  polynomial sequence on $\R$ with generating function \eqref{ftuc}.
\end{corollary}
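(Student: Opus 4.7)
The plan is to combine the explicit set-partition formula \eqref{6rr754} from Proposition~\ref{Proposition2} with the classical exponential (Bell polynomial) expansion of the one-dimensional polynomials $p_n$. First, I would show that for each $k\in\mathbb N$ the element $\mathbb D^*_k\eta\in\mathcal D'{}^{\odot k}$ is realized as a symmetric signed Radon measure on $(\R^d)^k$: by \eqref{vytfyfr} extended by continuity to arbitrary $f^{(k)}\in\mathcal D^{\odot k}$, $\mathbb D^*_k\eta$ is the pushforward of $\eta$ under the diagonal embedding $x\mapsto(x,\dots,x)$, and such a pushforward is a well-defined element of $\mathbb M_{\mathrm{sym}}((\R^d)^k)$ supported on the diagonal. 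Since symmetric tensor products of symmetric signed Radon measures are again symmetric signed Radon measures on the product space, formula \eqref{6rr754} exhibits $P^{(n)}(\eta)$ as a finite linear combination of such measures, giving $P^{(n)}(\eta)\in\mathbb M_{\mathrm{sym}}((\R^d)^n)$.

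Next, I would evaluate $P^{(n)}(\eta)$ on $\Lambda^n$. For any fixed partition $\{B_1,\dots,B_k\}\in\mathfrak P(n)$, the plain tensor product satisfies
$$\bigl((\mathbb D^*_{|B_1|}\eta)\otimes\cdots\otimes(\mathbb D^*_{|B_k|}\eta)\bigr)(\Lambda^n)=\prod_{j=1}^k(\mathbb D^*_{|B_j|}\eta)(\Lambda^{|B_j|})=\eta(\Lambda)^k,$$
since each diagonal pushforward evaluates on a cube $\Lambda^{|B_j|}$ to $\eta(\Lambda)$. Because $\Lambda^n$ is a symmetric Borel set, symmetrization of a measure preserves its value on it, so the identity remains true with $\otimes$ replaced by $\odot$. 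Summing over all partitions via \eqref{6rr754} yields
$$P^{(n)}(\eta)(\Lambda^n)=\sum_{\pi\in\mathfrak P(n)}\left(\prod_{B\in\pi}\alpha_{|B|}\right)\eta(\Lambda)^{|\pi|}.$$

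Finally, I would identify the right-hand side with $p_n(\eta(\Lambda))$ by expanding \eqref{ftuc} using \eqref{tydfrdrs5s}: the exponential formula gives
$$\exp[ta(u)]=\exp\!\left[\sum_{k=1}^\infty\frac{t\alpha_k}{k!}u^k\right]=\sum_{n=0}^\infty\frac{u^n}{n!}\sum_{\pi\in\mathfrak P(n)}t^{|\pi|}\prod_{B\in\pi}\alpha_{|B|},$$
so that $p_n(t)=\sum_{\pi\in\mathfrak P(n)}t^{|\pi|}\prod_{B\in\pi}\alpha_{|B|}$, and substituting $t=\eta(\Lambda)$ proves \eqref{vt7rei7549cd}. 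The only mild technical point that requires care is identifying $\mathbb D^*_k\eta$ with its diagonal-pushforward-measure realization; this follows from \eqref{vytfyfr} by a density argument on $\mathcal D^{\odot k}$ and is the main non-formal step in the argument.
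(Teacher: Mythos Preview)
Your argument is correct. For the first claim, that $P^{(n)}(\eta)\in\mathbb M_{\mathrm{sym}}((\R^d)^n)$, your proof coincides with the paper's: both identify $\mathbb D^*_k\eta$ with the diagonal pushforward measure and then invoke the explicit formula~\eqref{6rr754}.

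For the evaluation~\eqref{vt7rei7549cd}, however, your route differs from the paper's. The paper substitutes $\xi=u\chi_\Lambda$ (formally, via an approximation argument since $\chi_\Lambda\notin\mathcal D$) into the generating function~\eqref{bhgyut67rt}, obtaining
\[
\sum_{n=0}^\infty\frac{u^n}{n!}\bigl(P^{(n)}(\eta)\bigr)(\Lambda^n)=\exp\bigl[\eta(\Lambda)\,a(u)\bigr],
\]
and then reads off the coefficients by comparison with~\eqref{ftuc}. You instead evaluate each term of the partition sum~\eqref{6rr754} directly on $\Lambda^n$, using that the diagonal pushforward satisfies $(\mathbb D^*_m\eta)(\Lambda^m)=\eta(\Lambda)$ and that symmetrization preserves values on symmetric sets, and then match against the Bell-polynomial expansion of $p_n(t)$ derived from~\eqref{ftuc}. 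Your approach has the virtue of bypassing the approximation step (which the paper mentions but does not detail), at the cost of re-deriving the classical exponential formula for $p_n$; the paper's generating-function approach is conceptually cleaner once that approximation is justified. Both are entirely sound.
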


\begin{proof}
Let $\eta\in \mathbb M(\R^d)$. For each $j\in\mathbb N$, $\mathbb D_j^*\eta\in\mathbb M_{\mathrm{sym}}((\R^d)^j)$, since for each $f^{(j)}\in\mathcal D^{\odot j}$
$$\langle \mathbb D_j^*\eta,f^{(j)}\rangle=\int_{\R^d}f^{(j)}(x,\dots,x)\,d\eta(x).$$
Note that the measure $\mathbb D_j^*\eta$ is concentrated on the set
$$\{(x_1,x_2,\dots,x_j)\in(\R^d)^j\mid x_1=x_2=\dots=x_j\}.$$
By formula \eqref{6rr754}, we therefore get $P^{(n)}(\eta)\in \mathbb M_{\mathrm{sym}}((\R^d)^n)$.

Fix any $\Lambda\in\mathcal B_0(\R^d)$. Set $\xi:=u\chi_\Lambda$, where $u\in\R$ and $\chi_\Lambda$ denotes the indicator function of $\Lambda$. It easily follows from \eqref{bhgyut67rt} by an approximation  argument that
\begin{align}
\sum_{n=0}^\infty \frac{u^n}{n!}\big(P^{(n)}(\eta)\big)(\Lambda^n)&=\sum_{n=0}^\infty \frac1{n!}\,\langle P^{(n)}(\eta),\xi^{\otimes n}\rangle\notag\\
&=\exp\left[\int_{\R^d}a(\xi(x))\,d\eta(x)\right]\notag\\
&=\exp\left[\eta(\Lambda)a(u)\right].\label{dr6e65}
\end{align}
In formula \eqref{dr6e65}, $\langle P^{(n)}(\eta),\xi^{\otimes n}\rangle$ denotes the integral of the function $\xi^{\otimes n}$ with respect to the measure $P^{(n)}(\eta)$.
Formula \eqref{vt7rei7549cd} now follows from \eqref{ftuc} and \eqref{dr6e65}.
\end{proof}

The following proposition shows that the lifted polynomials $(P^{(n)})_{n=0}^\infty$  have an additional property of binomial type.

\begin{proposition}\label{uktro876p}
Let $(P^{(n)})_{n=0}^\infty$ be the monic polynomial sequence on $\mathcal D'$  of binomial type that has the generating function  \eqref{bhgyut67rt}. Let $\xi,\phi\in\mathcal D$ be such that \begin{equation}\label{gyut86}
\{x\in\R^d\mid\xi(x)\ne0\}\cap \{x\in\R^d\mid\phi(x)\ne0\}=\varnothing.\end{equation}
Then, for any $\omega\in\mathcal D'$ and $k,n\in\N$,
\begin{equation}\label{gyr7r5e}
\langle P^{(k+n)}(\omega),\xi^{\otimes k}\odot \phi^{\otimes n}\rangle=\langle P^{(k)}(\omega),\xi^{\otimes k}\rangle\langle P^{(n)}
(\omega),\phi^{\otimes n}\rangle.\end{equation}
Therefore, for each $n\in\mathbb N$,
\begin{equation}\label{uf7r6o8}
\langle P^{(n)}(\omega),(\xi+\phi)^{\otimes n}\rangle=\sum_{k=0}^n\binom nk \langle P^{(k)}(\omega),\xi^{\otimes k}\rangle \langle P^{(n-k)}
(\omega),\phi^{\otimes (n-k)}\rangle.\end{equation}
\end{proposition}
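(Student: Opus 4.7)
The key observation is that the disjointness condition \eqref{gyut86} forces $\xi(x)\phi(x)=0$ for all $x\in\R^d$, so that $(s\xi+t\phi)^k=(s\xi)^k+(t\phi)^k$ for every $k\ge 1$ and all real parameters $s,t$. Plugging $s\xi+t\phi$ into the series $a(\xi)=\sum_{k\ge 1}a_k\xi^k$ from \eqref{yte6790p0}, one therefore obtains the additive splitting $a(s\xi+t\phi)=a(s\xi)+a(t\phi)$ as an equality in $\mathcal D$ (term by term in the powers of $s,t$). This is the one algebraic input needed; everything else follows from the generating function \eqref{bhgyut67rt}.

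Substituting $s\xi+t\phi$ into \eqref{bhgyut67rt} and using the splitting, the right-hand side factorizes as $\exp[\langle\omega,a(s\xi)\rangle]\exp[\langle\omega,a(t\phi)\rangle]$. Applying \eqref{bhgyut67rt} once more to each factor (now with arguments $s\xi$ and $t\phi$ respectively) rewrites the right-hand side as
$$\Bigg(\sum_{k=0}^\infty\frac{s^k}{k!}\langle P^{(k)}(\omega),\xi^{\otimes k}\rangle\Bigg)\Bigg(\sum_{n=0}^\infty\frac{t^n}{n!}\langle P^{(n)}(\omega),\phi^{\otimes n}\rangle\Bigg).$$
On the left, I would expand $(s\xi+t\phi)^{\otimes m}=\sum_{j=0}^m\binom{m}{j}s^{m-j}t^j\,\xi^{\otimes(m-j)}\odot\phi^{\otimes j}$ by the symmetric binomial theorem and reindex with $k=m-j$, $n=j$. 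Comparing the coefficients of $s^k t^n$ on both sides yields \eqref{gyr7r5e}.

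For \eqref{uf7r6o8}, I would expand $(\xi+\phi)^{\otimes n}=\sum_{k=0}^n\binom{n}{k}\xi^{\otimes k}\odot\phi^{\otimes(n-k)}$, pair with $P^{(n)}(\omega)$, and apply \eqref{gyr7r5e} term by term. The only subtlety is the formal-series manipulation with two parameters $s,t$, but this is harmless: for fixed $\omega\in\mathcal D'$ the quantity $\langle P^{(m)}(\omega),(s\xi+t\phi)^{\otimes m}\rangle$ is an honest polynomial in $(s,t)$ of total degree $m$, so on each side the coefficient of $s^k t^n$ is a well-defined real number and the identification is legitimate. Thus the only real content of the proof is the algebraic splitting of $a(s\xi+t\phi)$, which is immediate from the disjoint support hypothesis.
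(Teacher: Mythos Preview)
Your proof is correct and follows essentially the same route as the paper: both arguments rest on the single algebraic fact that disjoint supports force $a(\xi+\phi)=a(\xi)+a(\phi)$, hence the generating function \eqref{bhgyut67rt} factorizes, and then one reads off \eqref{gyr7r5e} by matching terms of the resulting double sum. The only cosmetic difference is that you introduce the auxiliary real parameters $s,t$ to make the coefficient extraction explicit, whereas the paper writes the two double sums \eqref{gyygt8} and \eqref{yur75r4dq} directly in $\xi,\phi$ and compares them; your variant is arguably a bit cleaner in justifying why the individual $(k,n)$-terms must agree.
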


\begin{proof}
We have
\begin{align}
\sum_{n=0}^\infty \frac1{n!}\langle P^{(n)}(\omega),(\xi+\phi)^{\otimes n}\rangle&=\sum_{n=0}^\infty \sum_{k=0}^n \frac1{k!(n-k)!}\,\langle P^{(n)}(\omega),\xi^{\otimes k}\odot\phi^{\otimes(n-k)}\rangle\notag \\
&=\sum_{k=0}^\infty\sum_{n=0}^\infty \frac1{k!\, n!}
\langle P^{(n+k)}
(\omega),\phi^{\otimes n}\odot \xi^{\otimes k}\rangle,\label{gyygt8}
\end{align}
and by \eqref{bhgyut67rt} and \eqref{gyut86}
\begin{align}
\sum_{n=0}^\infty \frac1{n!}\langle P^{(n)}(\omega),(\xi+\phi)^{\otimes n}\rangle&=\exp\big[\langle\omega,a(\xi+\phi)\rangle\big]\notag\\
&=\exp\big[\langle\omega,a(\xi)\rangle\big] \exp\big[\langle\omega,a(\phi)\rangle\big]\notag\\
&=\sum_{k=0}^\infty\sum_{n=0}^\infty \frac1{k!\, n!}
\langle P^{(k)}(\omega),\xi^{\otimes k}\rangle\langle P^{(n)}
(\omega),\phi^{\otimes n}\rangle.\label{yur75r4dq}
\end{align}
Formulas \eqref{gyygt8}, \eqref{yur75r4dq} imply \eqref{gyr7r5e}, hence also \eqref{uf7r6o8}.
\end{proof}

We will now consider  examples of sequences of lifted polynomials of binomial type.

\subsection{Falling factorials on $\mathcal{D}'$}

The classical falling factorials is the sequence $(p_n)_{n=0}^\infty$ of monic polynomials on $\R$ of binomial type that are explicitly given by
$$p_n(t)=(t)_n:=t(t-1)(t-2)\dotsm (t-n+1).$$ The corresponding delta operator is $Q=e^D-1$, so that $Q$ is the difference operator $(Qp)(t)=p(t+1)-p(t)$. Here $p$ belongs to $\mathcal P(\R)$, the space of polynomials on $\R$. The generating function of the falling factorials is
$$\sum_{n=0}^\infty\frac{u^n}{n!}(t)_n=\exp[t\log(1+u)]=(1+u)^t.$$
One also defines an extension of the binomial coefficient,
\begin{equation}\label{iygt86t868}
\binom tn:=\frac1{n!}\,(t)_n=\frac{t(t-1)(t-2)\dotsm (t-n+1)}{n!},\end{equation}
which becomes the classical binomial coefficient for $t\in\N$, $t\ge n$.

Let us now consider the corresponding lifted sequence of polynomials, $(P^{(n)})_{n=0}^\infty$. We will call these polynomials the {\it falling factorials on $\mathcal D'$}. By analogy with the one-dimensional case, we will write $(\omega)_n:=P^{(n)}(\omega)$ for $\omega\in\mathcal D'$.

By \eqref{bgugfu8t}, $Q(x)=e^{D(x)}-1$. Hence, by Boole's formula,
\begin{equation}\label{ryde7r}
(Q(x)P)(\omega)=P(\omega+\delta_x)-P(\omega),\quad x\in\R^d,\ P\in\mathcal P(\mathcal D'),\end{equation}
and by \eqref{gyutru8t8},
$$(Q(\zeta)P)(\omega)=\big\langle \zeta(x), P(\omega+\delta_x)-P(\omega)\big\rangle,\quad \zeta\in\mathcal D', \ P\in\mathcal P(\mathcal D').$$
Further, by \eqref{bhgyut67rt}, the generating function  is given by
\begin{equation}\label{vtyfr57ir}
\sum_{n=0}^\infty\frac{1}{n!}\langle (\omega)_n,\xi^{\otimes n}\rangle=\exp\big[\langle\omega,\log(1+\xi)\rangle\big].\end{equation}

\begin{proposition}\label{Proposition1}
The falling factorials on $\mathcal{D}'$ have the following explicit form:
\begin{align}
&(\omega)_0=1;\notag\\
&(\omega)_1=\omega;\notag\\
&(\omega)_n(x_1,\ldots,x_n)=
\omega(x_1)(\omega(x_2)-\delta_{x_1}(x_2))\notag\\
&\qquad\ \times(\omega(x_3)-\delta_{x_1}(x_3)-\delta_{x_2}(x_3))\dotsm
(\omega(x_n)-\delta_{x_1}(x_n)-\dots -\delta_{x_{n-1}}(x_n))\label{fyd6e}
\end{align}
for $n\geq 2$.
\end{proposition}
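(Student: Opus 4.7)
The plan is to test formula \eqref{fyd6e} against $\xi^{\otimes n}$ for arbitrary $\xi\in\mathcal D$ and then invoke Lemma~\ref{yde6ue6cb}(i), which reduces the identification to a scalar identity. Write $L_n(\omega)$ for the right-hand side of \eqref{fyd6e}. Expanding $L_n(\omega)$ by selecting one summand from each factor produces a sum indexed by ``increasing rooted forests'' $F$ on $\{1,\dots,n\}$: for each $k\ge 2$ we either declare $k$ a root (picking the term $\omega(x_k)$) or assign it a parent $\mathrm{par}(k)=j<k$ (picking $-\delta_{x_j}(x_k)$), while vertex $1$ is always a root. The resulting term carries the sign $(-1)^{n-m(F)}=\prod_{T\in F}(-1)^{|T|-1}$, where $m(F)$ denotes the number of trees in $F$.

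Contracting with $\xi^{\otimes n}$ and integrating the $x_k$ in decreasing order of $k$, each distributional factor $\xi(x_k)\delta_{x_{\mathrm{par}(k)}}(x_k)$ is replaced by $\xi(x_{\mathrm{par}(k)})$. Iterating this collapse inside a tree $T$ with root $r$, every vertex of $T$ ultimately contributes a factor $\xi(x_r)$, while the root also carries the measure $\omega(x_r)$; the final integration in $x_r$ then yields $\langle\omega,\xi^{|T|}\rangle$. Therefore
\begin{equation*}
\langle L_n(\omega),\xi^{\otimes n}\rangle
=\sum_{F}(-1)^{n-m(F)}\prod_{T\in F}\langle\omega,\xi^{|T|}\rangle.
\end{equation*}

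Next I group forests by the induced set partition $\pi\in\mathfrak P(n)$: on a block $B=\{b_1<\dots<b_s\}$ the root must be $b_1$, and each $b_i$ with $i\ge 2$ admits $i-1$ possible parents in $B$, giving $(s-1)!$ increasing trees supported on $B$. For the falling factorials, $a(u)=\log(1+u)$ yields $\alpha_k=k!\,a_k=(-1)^{k-1}(k-1)!$ in the notation of Proposition~\ref{Proposition2}. The sum therefore collapses to
\begin{equation*}
\langle L_n(\omega),\xi^{\otimes n}\rangle
=\sum_{\pi\in\mathfrak P(n)}\prod_{B\in\pi}(-1)^{|B|-1}(|B|-1)!\,\langle\omega,\xi^{|B|}\rangle
=\sum_{\pi\in\mathfrak P(n)}\prod_{B\in\pi}\alpha_{|B|}\langle\omega,\xi^{|B|}\rangle
=\langle(\omega)_n,\xi^{\otimes n}\rangle,
\end{equation*}
the last equality being formula \eqref{yrd6iei}. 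Lemma~\ref{yde6ue6cb}(i) then gives $L_n(\omega)=(\omega)_n$ in $\mathcal D'{}^{\odot n}$.

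The one point requiring care is the rigorous manipulation of the products of diagonal distributions $\delta_{x_j}(x_k)$, but no genuine difficulty arises: each such factor is supported on $\{x_j=x_k\}$ with $j<k$, and decreasing-in-$k$ integration always evaluates a child variable against an independent parent variable, so no product of coincident deltas ever needs to be interpreted separately. I also note that $L_n(\omega)$ is not manifestly symmetric in $(x_1,\dots,x_n)$; the symmetry emerges a posteriori from its agreement with the symmetric tensor $(\omega)_n$, or directly from the observation that, for example, $\omega(x_j)\delta_{x_j}(x_k)=\omega(x_k)\delta_{x_j}(x_k)$ since $\delta_{x_j}(x_k)$ forces $x_j=x_k$.
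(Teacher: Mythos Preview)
Your proof is correct and takes a genuinely different route from the paper's argument.

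The paper proceeds inductively: it first checks that the explicit expression \eqref{fyd6e} satisfies the recurrence
\[
\langle(\omega)_{n+1},\xi^{\otimes(n+1)}\rangle=\langle(\omega)_n\odot\omega,\xi^{\otimes(n+1)}\rangle-n\langle(\omega)_n,\xi^2\odot\xi^{\otimes(n-1)}\rangle,
\]
and then verifies by induction that the delta operator $Q(x)=E(\delta_x)-\mathbf1$ lowers \eqref{fyd6e} correctly, i.e., that $\langle(\omega+\delta_x)_n,\xi^{\otimes n}\rangle=\langle(\omega)_n,\xi^{\otimes n}\rangle+n\xi(x)\langle(\omega)_{n-1},\xi^{\otimes(n-1)}\rangle$. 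Uniqueness of the basic sequence then forces agreement with the falling factorials.

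You instead expand the product \eqref{fyd6e} directly as a signed sum over increasing rooted forests on $\{1,\dots,n\}$, collapse each tree $T$ to $(-1)^{|T|-1}\langle\omega,\xi^{|T|}\rangle$ by successive delta-evaluations, group by the induced set partition, and use the count $(|B|-1)!$ of increasing trees on a block $B$ to land exactly on the partition expansion \eqref{yrd6iei} of Proposition~\ref{Proposition2} with $\alpha_k=(-1)^{k-1}(k-1)!$. This is a clean one-shot identification rather than an induction, and it makes transparent the combinatorial content of the formula (the bijection between the monomials in the expansion and increasing forests). The paper's route, by contrast, never invokes Proposition~\ref{Proposition2} and instead exhibits the recurrence explicitly, which has the minor advantage of giving that recurrence as a by-product. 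Both arguments handle the products of diagonal deltas in the same spirit: they are only ever paired against smooth test functions with the variables integrated in an order that avoids any ill-defined product.
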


\begin{proof} Let $((\omega)_n)_{n=0}^\infty$ denote the monic polynomial sequence  on $\mathcal D'$  defined by formula \eqref{fyd6e}. Note that, for $n\in\N$,  $(\omega)_n=0$ for $\omega=0$. It can be easily shown by induction that the  polynomials  $(\omega)_n$ satisfy the following recurrence relation:
\begin{align}
&(\omega)_0=1;\notag\\
&\langle (\omega)_{n+1},\xi^{\otimes (n+1)}\rangle
=\langle (\omega)_n\odot\omega,\xi^{\otimes (n+1)}\rangle-
n\langle (\omega)_n,\xi^2\odot\xi^{\otimes (n-1)}\rangle,\notag\\
&\qquad \xi\in\mathcal D,\ n\in\N_0.\label{tyr6i57e}
\end{align}
Furthermore, this recurrence relation uniquely determines the polynomials   $((\omega)_n)_{n=0}^\infty$. It suffices to prove that, for each $\omega\in\mathcal D'$, $n\in\mathbb N$, $\xi\in\mathcal D$, and $x\in\R^d$,
$$ \big(Q(x)\langle (\cdot)_n,\xi^{\otimes n}\rangle\big)(\omega)=n\xi(x)\langle (\omega)_{n-1},\xi^{\otimes (n-1)}\rangle,$$
or equivalently, by \eqref{ryde7r},
\begin{equation}
\langle (\omega+\delta_x)_n,\xi^{\otimes n}\rangle=\langle(\omega)_n,\xi^{\otimes n}\rangle+n\xi(x)\langle (\omega)_{n-1},\xi^{\otimes (n-1)}\rangle.
\label{yur75i}\end{equation}
We prove this formula by induction. It trivially holds for $n=1$. Assume that \eqref{yur75i} holds for $1,2,\dots,n$ and let us prove it for $n+1$. Using our assumption, the recurrence relation \eqref{tyr6i57e} and the polarization identity, we get
\begin{align*}
&\langle (\omega+\delta_x)_{n+1},\xi^{\otimes (n+1)}\rangle=
\langle (\omega+\delta_x)_n,\xi^{\otimes n}\rangle\langle\omega+\delta_x,\xi\rangle-n\langle (\omega+\delta_x)_n,\xi^2\odot\xi^{\otimes (n-1)}\rangle\\
&\quad=\big(\langle (\omega)_n,\xi^{\otimes n}\rangle+n\xi(x)\langle (\omega)_{n-1},\xi^{\otimes (n-1)}\rangle\big)\big(
\langle \omega,\xi\rangle +\xi(x)\big)\\
&\qquad-n\big( \langle (\omega)_n,\xi^2\odot\xi^{\otimes(n-1)}\rangle+\xi^2(x)\langle (\omega)_{n-1},\xi^{\otimes(n-1)}\rangle\\
&\qquad\qquad+(n-1)\xi(x)\langle (\omega)_{n-1},\xi^2\odot \xi^{\otimes(n-2)}\rangle\big)\\
&\quad=\big(\langle (\omega)_n\odot\omega,\xi^{\otimes (n+1)}\rangle-
n\langle (\omega)_n,\xi^2\odot\xi^{\otimes (n-1)}\rangle\big)+\xi(x)\langle (\omega)_n,\xi^{\otimes n}\rangle\\
&\qquad+n\xi(x)\big(
\langle (\omega)_{n-1}\odot\omega,\xi^{\otimes n}\rangle-(n-1)\langle (\omega)_{n-1},\xi^2\odot\xi^{\otimes(n-2)}\rangle\big)\\
&\quad=\langle (\omega)_{n+1},\xi^{\otimes (n+1)}\rangle+n\xi(x)\langle (\omega)_n,\xi^{\otimes n}\rangle. \qedhere
\end{align*}
\end{proof}

\begin{remark}
Note that the recurrence relation \eqref{tyr6i57e} satisfied by the polynomials on $\mathcal D'$ with generating function \eqref{vtyfr57ir} was already discussed in \cite{BKKL}.
\end{remark}

Since we have interpreted $(\omega)_n$ as a falling factorial on $\mathcal D'$, we naturally define {\it `$\omega$ choose $n$'} by $\binom\omega n:=\frac1{n!}(\omega)_n$, compare with \eqref{iygt86t868}.

We denote by $\Gamma$ the configuration space over $\R^d$, i.e., the space of all Radon measures $\gamma\in\mathbb M(\R^d)$ that are of the form $\gamma=\sum_{i=1}^\infty\delta_{x_i}$, where  $x_i\ne x_j$ if $i\ne j$. (We can obviously identify the configuration  $\gamma=\sum_{i=1}^\infty\delta_{x_i}$ with the (locally finite) set $\{x_i\}_{i\in\mathbb N}$.) The following result is immediate.

\begin{corollary}\label{uigt87}
For each $\gamma=\sum_{i=1}^\infty\delta_{x_i}$, formula \eqref{uytfr675e} holds.
\end{corollary}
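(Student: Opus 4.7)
The plan is to evaluate the generating function \eqref{vtyfr57ir} at $\omega=\gamma$ and read off the $n$-th coefficient. Writing $\gamma=\sum_{i=1}^\infty \delta_{x_i}$ with the $x_i$'s pairwise distinct (so $\gamma$ is locally finite) and noting that any $\xi\in\mathcal D$ has compact support, the sum $\langle \gamma,\log(1+\xi)\rangle=\sum_{i=1}^\infty\log(1+\xi(x_i))$ involves only finitely many non-vanishing terms. Hence
$$
\exp\bigl[\langle\gamma,\log(1+\xi)\rangle\bigr]=\prod_{i=1}^\infty\bigl(1+\xi(x_i)\bigr),
$$
an effectively finite product.

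Next I would replace $\xi$ by $t\xi$ and use Remark~\ref{d6e4i7} to treat both sides as formal power series in $t$, comparing the coefficient of $t^n$. On the left, by \eqref{vtyfr57ir}, this coefficient is $\frac{1}{n!}\langle(\gamma)_n,\xi^{\otimes n}\rangle$. On the right, the elementary expansion
$$
\prod_{i=1}^\infty\bigl(1+t\xi(x_i)\bigr)=\sum_{n=0}^\infty t^n\!\!\sum_{\{i_1,\dots,i_n\}\subset\mathbb N}\!\!\xi(x_{i_1})\dotsm\xi(x_{i_n})
$$
gives coefficient $\sum_{\{i_1,\dots,i_n\}\subset\mathbb N}\xi(x_{i_1})\dotsm\xi(x_{i_n})$, where (since $\xi$ has compact support) all but finitely many summands vanish. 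Recognising
$$
\xi(x_{i_1})\dotsm\xi(x_{i_n})=\bigl\langle\delta_{x_{i_1}}\odot\dotsm\odot\delta_{x_{i_n}},\xi^{\otimes n}\bigr\rangle,
$$
we obtain the equality
$$
\tfrac1{n!}\langle(\gamma)_n,\xi^{\otimes n}\rangle=\Bigl\langle\sum_{\{i_1,\dots,i_n\}\subset\mathbb N}\delta_{x_{i_1}}\odot\dotsm\odot\delta_{x_{i_n}},\xi^{\otimes n}\Bigr\rangle
$$
for every $\xi\in\mathcal D$. An application of Lemma~\ref{yde6ue6cb}(i) then identifies the two symmetric distributions, and dividing by $n!$ yields \eqref{uytfr675e} after recalling $\binom\gamma n=\tfrac1{n!}(\gamma)_n$.

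The only mild subtlety is legitimising the comparison of formal series with the concrete expansion of the infinite product; this is handled precisely by the local finiteness of $\gamma$ together with the compact support of $\xi$, which makes all relevant series absolutely (indeed finitely) convergent, so the formal identity upgrades to a pointwise identity of real numbers as noted in the remark following Definition~\ref{es5w46u3}. No other steps require effort: the proof is essentially a one-line consequence of the product formula for the generating function once one interprets the exponential of the logarithmic pairing correctly.
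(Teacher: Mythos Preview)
Your proof is correct. The paper gives no proof at all, stating only that ``the following result is immediate''; from the placement of the corollary (right after Proposition~\ref{Proposition1}) the intended argument is almost certainly to substitute $\gamma=\sum_i\delta_{x_i}$ directly into the explicit product formula~\eqref{fyd6e}, whence each successive factor $\gamma(y_k)-\delta_{y_1}(y_k)-\dots-\delta_{y_{k-1}}(y_k)$ forces $y_k$ to be one of the atoms not already chosen, giving $(\gamma)_n=\sum_{(i_1,\dots,i_n)\text{ distinct}}\delta_{x_{i_1}}\otimes\dots\otimes\delta_{x_{i_n}}=n!\sum_{\{i_1,\dots,i_n\}}\delta_{x_{i_1}}\odot\dots\odot\delta_{x_{i_n}}$.

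Your route via the generating function~\eqref{vtyfr57ir} is a genuinely different (and arguably cleaner) argument: rather than interpreting the distributional product in~\eqref{fyd6e}, you exploit the fact that $\exp[\langle\gamma,\log(1+\xi)\rangle]$ collapses to the finite product $\prod_i(1+\xi(x_i))$, whose expansion immediately exhibits the elementary symmetric sums. This bypasses Proposition~\ref{Proposition1} entirely and uses only the generating function and Lemma~\ref{yde6ue6cb}(i). One small redundancy: after applying Lemma~\ref{yde6ue6cb}(i) you already have $\tfrac1{n!}(\gamma)_n=\sum_{\{i_1,\dots,i_n\}}\delta_{x_{i_1}}\odot\dots\odot\delta_{x_{i_n}}$, so no further ``dividing by $n!$'' is needed.
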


\begin{remark} Polynomials $\binom\gamma n$ play a crucial role in the theory of point process (i.e., $\Gamma$-valued random variables), see e.g.\ \cite{KK}. More precisely, given a probability space $(\Xi,\mathcal F,\mathbb P)$ and a point process $\gamma:\Xi\to\Gamma$, the $n$th correlation measure of $\gamma$ is defined as the (unique) measure $\sigma^{(n)}$ on $\left((\R^d)^n,\mathcal B_{\mathrm{sym}}((\R^d)^n)\right)$ that satisfies
$$\mathbb E \bigg\langle\binom \gamma n, f^{(n)}\bigg\rangle=\int_{(\R^d)^n}
f^{(n)}(x_1,\dots,x_n)\,d\sigma^{(n)}(x_1,\dots,x_n)\quad\text{for all $f^{(n)}\in\mathcal D^{\odot n}$, $f^{(n)}\ge0$}.$$
Here $\mathbb E$ denotes the expectation with respect to the probability measure $\mathbb P$.
Under very mild conditions on the point process $\gamma$, the correlation measures $(\sigma^{(n)})_{n=1}^\infty$ uniquely identify the distribution of $\gamma$ on $\Gamma$.
In the case where each measure $\sigma^{(n)}$ is absolutely continuous with respect to the Lebesgue measure, one defines the $n$th correlation function of the point process $\gamma$, denote by $k^{(n)}(x_1,\dots,x_n)$, as follows:
$$d\sigma^{(n)}(x_1,\dots,x_n)=\frac1{n!}\,k^{(n)}(x_1,\dots,x_n)\,dx_1\dotsm dx_n,$$
or equivalently
$$\mathbb E \big\langle (\gamma)_n, f^{(n)}\big\rangle=\int_{(\R^d)^n}
f^{(n)}(x_1,\dots,x_n) k^{(n)}(x_1,\dots,x_n)\,\,dx_1\dotsm dx_n.$$
\end{remark}

\begin{corollary}\label{bvytfr76i}
For each $\Lambda\in\mathcal B_0(\R^d)$, $\gamma\in\Gamma$ and $n\in\mathbb N$, we have,
$$\binom\gamma n(\Lambda^n)=\binom{\gamma(\Lambda)}n.$$
\end{corollary}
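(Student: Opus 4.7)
The plan is to derive this as an immediate specialization of Corollary~\ref{f675ew} to the falling factorials. Recall that the lifted sequence $(P^{(n)})_{n=0}^\infty$ here is $(\omega)_n$, whose generating function \eqref{vtyfr57ir} is precisely the one obtained by taking $a(u)=\log(1+u)$, i.e., the compositional inverse of $q(u)=e^u-1$. The corresponding one-dimensional polynomial sequence with generating function \eqref{ftuc} is, by construction, the classical falling factorials $p_n(t)=(t)_n$.

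Applying Corollary~\ref{f675ew} to $\eta=\gamma\in\Gamma\subset\mathbb M(\R^d)$ and to any $\Lambda\in\mathcal B_0(\R^d)$, we obtain
\begin{equation*}
\big((\gamma)_n\big)(\Lambda^n)=(\gamma(\Lambda))_n.
\end{equation*}
Note that $\gamma(\Lambda)\in\mathbb N_0$ since $\gamma$ is a locally finite configuration and $\Lambda$ is bounded. Dividing both sides by $n!$ and using the definition $\binom\gamma n:=\frac{1}{n!}(\gamma)_n$ together with the classical identity $\binom t n=\frac{1}{n!}(t)_n$ (valid in particular for $t\in\mathbb N_0$), we conclude
\begin{equation*}
\binom\gamma n(\Lambda^n)=\frac{1}{n!}(\gamma(\Lambda))_n=\binom{\gamma(\Lambda)}{n}.
\end{equation*}

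There is no real obstacle here; the only point to verify is that the hypothesis of Corollary~\ref{f675ew} is indeed satisfied by the falling factorials on $\mathcal D'$, which is transparent from \eqref{vtyfr57ir} matching \eqref{bhgyut67rt} with $a(\xi)=\log(1+\xi)$. The combinatorial content of \eqref{uytfr675e} provides a reassuring consistency check: for $\gamma=\sum_{i\in\mathbb N}\delta_{x_i}$ the right-hand side of \eqref{uytfr675e} evaluated on $\Lambda^n$ counts unordered $n$-subsets of $\{x_i\}\cap\Lambda$, whose number is exactly $\binom{\gamma(\Lambda)}{n}$.
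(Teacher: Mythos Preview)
Your proof is correct and takes essentially the same approach as the paper: the paper states that the result is obvious by Corollary~\ref{uigt87}, or alternatively by Corollary~\ref{f675ew}, and you have written out the latter route in detail while noting the former as a consistency check.
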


\begin{proof}
The result is obvious by Corollary~\ref{uigt87}, or alternatively, by Corollary~\ref{f675ew}.
\end{proof}

\begin{remark}
In view of Corollaries \ref{uigt87} and \ref{bvytfr76i}, for the falling factorials on $\mathcal D'$, the set $\Gamma$ plays a role similar to that played by the set $\mathbb N$ for the falling factorials on $\R$.
\end{remark}

\subsection{Rising factorials on $\mathcal{D}'$}

The classical rising factorials is the sequence $(p_n)_{n=0}^\infty$ of monic polynomials on $\R$ of binomial type that are explicitly given by
$$p_n(t)=(t)^n:=t(t+1)(t+2)\dotsm (t+n-1).$$ The corresponding delta operator is $Q=1-e^{-D}$, so that $Q$ is the difference operator $(Qp)(t)=p(t)-p(t-1)$ for $p\in\mathcal P(\R)$. The generating function of the rising factorials is given by
$$\sum_{n=0}^\infty\frac{u^n}{n!}(t)^n=\exp[-t\log(1-u)]=(1-u)^{-t}.$$
One also has the following connection between the rising factorials and the falling factorials: $(t)^n=(-1)^n(-t)_n$.

Let us now consider the corresponding lifted sequence of polynomials, $(P^{(n)})_{n=0}^\infty$. We will call these polynomials the {\it rising factorials on $\mathcal D'$}, and we will write  $(\omega)^n:=P^{(n)}(\omega)$ for $\omega\in\mathcal D'$.

By \eqref{bgugfu8t}, $Q(x)=1-e^{-D(x)}$. Hence, by Boole's formula,
$$(Q(x)P)(\omega)=P(\omega)-P(\omega-\delta_x),\quad x\in\R^d,\ P\in\mathcal P(\mathcal D'),$$
and by \eqref{gyutru8t8},
$$(Q(\zeta)P)(\omega)=\big\langle \zeta(x), P(\omega)-P(\omega-\delta_x)\big\rangle,\quad \zeta\in\mathcal D', \ P\in\mathcal P(\mathcal D').$$
By \eqref{bhgyut67rt}, the generating function  is equal to
\begin{equation}\label{gygfyeaig}
\sum_{n=0}^\infty\frac{1}{n!}\langle (\omega)^n,\xi^{\otimes n}\rangle=\exp\big[\langle\omega,-\log(1-\xi)\rangle\big].\end{equation}

\begin{proposition}\label{iwqgf8yot}
We have $(\omega)^n=(-1)^n(-\omega)_n$ for all $n\in\N$, and the
 following explicit formulas hold:
\begin{align}
&(\omega)^0=1;\notag\\
&(\omega)^1=\omega;\notag\\
&(\omega)^n(x_1,\ldots,x_n)=
\omega(x_1)(\omega(x_2)+\delta_{x_1}(x_2))\notag\\
&\qquad\ \times(\omega(x_3)+\delta_{x_1}(x_3)+\delta_{x_2}(x_3))\dotsm
(\omega(x_n)+\delta_{x_1}(x_n)+\dots +\delta_{x_{n-1}}(x_n))\notag
\end{align}
for $n\geq 2$.
\end{proposition}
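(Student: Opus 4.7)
The plan is to reduce everything to Proposition~\ref{Proposition1} via the generating function identity
$$\exp\big[\langle\omega,-\log(1-\xi)\rangle\big]=\exp\big[\langle -\omega,\log(1+(-\xi))\rangle\big].$$
Substituting $\omega\mapsto -\omega$ and replacing $\xi$ by $-\xi$ in the generating function \eqref{vtyfr57ir} of the falling factorials, and using $(-\xi)^{\otimes n}=(-1)^n\xi^{\otimes n}$, I would obtain
$$\sum_{n=0}^\infty\frac{(-1)^n}{n!}\langle (-\omega)_n,\xi^{\otimes n}\rangle=\exp\big[\langle\omega,-\log(1-\xi)\rangle\big]=\sum_{n=0}^\infty\frac{1}{n!}\langle (\omega)^n,\xi^{\otimes n}\rangle,$$
as an equality in $\mathcal S(\mathcal D,\R)$. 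Matching the coefficients of $\xi^{\otimes n}$ yields $\langle (\omega)^n,\xi^{\otimes n}\rangle=(-1)^n\langle (-\omega)_n,\xi^{\otimes n}\rangle$ for every $\xi\in\mathcal D$, and by Lemma~\ref{yde6ue6cb}(i) this forces the equality $(\omega)^n=(-1)^n(-\omega)_n$ in $\mathcal D'{}^{\odot n}$.

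Once this identity is established, the explicit formula is immediate from Proposition~\ref{Proposition1}: plugging $-\omega$ into \eqref{fyd6e} and collecting the $n$ minus signs from the factors with the overall $(-1)^n$, each bracket $(-\omega)(x_j)-\sum_{i<j}\delta_{x_i}(x_j)$ turns into $-\bigl(\omega(x_j)+\sum_{i<j}\delta_{x_i}(x_j)\bigr)$, and the resulting $(-1)^n$ cancels the prefactor $(-1)^n$. The cases $n=0$ and $n=1$ are read off directly from $(\omega)^0=1$ and $(\omega)^1=\omega=-(-\omega)_1$.

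The only subtle point is the justification of the substitution $\xi\mapsto -\xi$ inside the formal tensor power series: this is really just the statement (cf.\ Remark~\ref{d6e4i7}) that an equality in $\mathcal S(\mathcal D,\R)$ is an equality of all coefficients in $\mathcal D'{}^{\odot n}$, so it is not a substitution in a convergence sense but a coefficient identification, and the sign $(-1)^n$ is absorbed into $\xi^{\otimes n}\mapsto(-1)^n\xi^{\otimes n}$. No other obstacle is anticipated; everything else is the combinatorial bookkeeping already carried out in Proposition~\ref{Proposition1}.
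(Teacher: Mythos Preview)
Your proof is correct and follows essentially the same route as the paper's own argument: both compute the generating function $\sum_{n}\frac{(-1)^n}{n!}\langle(-\omega)_n,\xi^{\otimes n}\rangle=\exp[\langle\omega,-\log(1-\xi)\rangle]$ via the substitution $(\omega,\xi)\mapsto(-\omega,-\xi)$ in \eqref{vtyfr57ir}, compare with \eqref{gygfyeaig}, and then read off the explicit formula from Proposition~\ref{Proposition1}. Your additional remarks (the explicit appeal to Lemma~\ref{yde6ue6cb}(i) and the comment on the formal substitution) only make the paper's terse argument more explicit.
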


\begin{proof}
By \eqref{vtyfr57ir},
$$\sum_{n=0}^\infty\frac1{n!}\langle(-1)^n(-\omega)_n,\xi^{\otimes n}\rangle=\sum_{n=0}^\infty\frac1{n!}\langle (-\omega)_n,(-\xi)^{\otimes n}\rangle=\exp\big[\langle\omega,-\log(1-\xi)\rangle\big].$$
Hence, by \eqref{gygfyeaig}, $(\omega)^n=(-1)^n(-\omega)_n$ for all $\omega\in\mathcal D'$ and $n\in\mathbb N$. From this and Proposition \ref{Proposition1},  the statement follows.
\end{proof}

\subsection{Abel polynomials on $\mathcal D'$}
Let us fix a parameter $\alpha\in\R\setminus\{0\}$. The classical Abel polynomials on $\R$ corresponding to the parameter $\alpha$ is the monic polynomial sequence $(p_n)_{n=0}^\infty$  of binomial type that has the delta operator $Q=De^{\alpha D}$, i.e., $(Qp)(t)=p'(t+\alpha)$ for $p\in\mathcal P(\R)$. Thus, $Q=q(D)$, where
$ q(u)=ue^{\alpha u}$.
The generating function of the Abel polynomials is given by
$$\sum_{n=0}^\infty \frac{u^n}{n!}\,p_n(t)=\exp\big[t\alpha^{-1}W(\alpha u)\big],$$
where $W$ is the inverse function of $u\mapsto ue^u$ (around $0$),  the so-called
Lambert $W$-function.

Consider the corresponding lifted sequence of polynomials $(P^{(n)})_{n=0}^\infty$, the {\it Abel polynomials on $\mathcal D'$}. By Lemma \ref{Lemma4} and \eqref{bgugfu8t},
$$Q(x)=D(x)e^{\alpha D(x)}=D(x)E(\alpha \delta_x),\quad x\in\R^d,$$ and by \eqref{gyutru8t8},
$$Q(\zeta)P=\big\langle\zeta(x), D(x)P(\cdot+\alpha \delta_x)\big\rangle,\quad \zeta\in\mathcal D',\ P\in\mathcal P(\mathcal D').$$
By \eqref{bhgyut67rt}, the generating function  is equal to
\[
\sum_{n=0}^\infty\frac{1}{n!}\langle P^{(n)}(\omega),\xi^{\otimes n}\rangle
=\exp\left[\left\langle\omega,\alpha^{-1}W(\alpha\xi)\right\rangle\right].
\]

We have
 \[
\alpha^{-1}W(\alpha u)=\sum_{k=1}^\infty\frac{(-\alpha k)^{k-1}}{k!}u^k.
\]
Hence, by Proposition \ref{Proposition2},
 $$
 \langle P^{(n)}(\omega),\xi^{\otimes n}\rangle=\sum_{\pi\in \mathfrak P(n)}\prod_{B\in\pi}(-\alpha|B|)^{|B|-1}\langle\omega,\xi^{|B|}\rangle.$$

\subsection{Laguerre polynomials on $\mathcal D'$ of binomial type}
\label{hjgvfd7r6}

Let us recall that the (monic) Laguerre polynomials on $\R$ corresponding to a parameter $k\ge-1$, $(p_n^{[k]})_{n=0}^\infty$, have the generating function
\begin{equation}\label{hgfdyr77}
\sum_{n=0}^\infty \frac{u^n}{n!}\,p_n^{[k]}(t)=\exp\bigg[\frac{tu}{1+u}\bigg](1+u)^{-(k+1)}.\end{equation}
In particular, for the  parameter $k=-1$, the Laguerre polynomial sequence  $(p_n)_{n=0}^\infty:=(p_n^{[-1]})_{n=0}^\infty$ has the generating function
\begin{equation}\label{nhtfr76er}
\sum_{n=0}^\infty \frac{u^n}{n!}\,p_n(t)=\exp\bigg[\frac{tu}{1+u}\bigg].\end{equation}
Hence, the polynomial sequence $(p_n)_{n=0}^\infty$ is  of binomial type and its delta operator is
  $Q=q(D)$ with
$$ q(u)=\frac u{1-u}=\sum_{k=1}^\infty u^k.$$

The corresponding lifted sequence $(P^{(n)})_{n=0}^\infty$ will be called the {\it Laguerre polynomial sequence on $\mathcal D'$ of binomial type}. Thus, for each $x\in\R^d$, the  delta operator $Q(x)$ of
$(P^{(n)})_{n=0}^\infty$  is given by
\[
Q(x)=q(D(x))=\frac{D(x)}{1-D(x)}=\sum_{k=1}^\infty D(x)^k,
\]
and the corresponding generating function is
\begin{equation}\label{gdrydr6es6}
\sum_{n=0}^\infty\frac{1}{n!}\langle P^{(n)}(\omega),\xi^{\otimes n}\rangle
=\exp\left[\left\langle\omega,\frac{\xi}{1+\xi}\right\rangle\right].
\end{equation}

By Proposition \ref{Proposition2}, the polynomial $\langle P^{(n)}(\omega),\xi^{\otimes n}\rangle$ has representation \eqref{yrd6iei} with $\alpha_k=(-1)^{k+1}k!$\,. In view of the factor $k!$ in $\alpha_k$, we can also give the following combinatorial formula for  $\langle P^{(n)}(\omega),\xi^{\otimes n}\rangle$.

 Let $\mathfrak {W}(n)$ denote the collection of all sets
  $\beta=\{b_1,b_2,\dots,b_k\}$ such that each $b_i=(j_1,\dots,j_{l_i})$ is an element of $\{1,2,\dots,n\}^{l_i}$ with  $j_u\ne j_v$ if $u\ne v$, and each $j\in \{1,2,\dots,n\}$ is a coordinate of exactly one $b_i\in\beta$.
For $\beta\in \mathfrak {W}(n)$ and $b_i=(j_1,\dots,j_{l_i})\in\beta$, we denote $|b_i|:=l_i$.

By \eqref{yrd6iei}, we now get, for the Laguerre polynomials,
\begin{equation}\label{yre75o}
\langle P^{(n)}(\omega),\xi^{\otimes n}\rangle=\sum_{\beta\in \mathfrak {W}(n)}
\prod_{b\in\beta}\langle -\omega,(-\xi)^{|b|}\rangle.
\end{equation}

\section{Sheffer sequences}\label{Subsection3.3}

\begin{definition}
Let $\left(Q(\zeta)\right)_{\zeta\in\mathcal{D}'}$ be a family of delta operators. We say that  a monic polynomial sequence on $\mathcal D'$, $(S^{(n)})_{n=0}^\infty\,$,  is a {\it Sheffer sequence for the family of delta operators}
$\left(Q(\zeta)\right)_{\zeta\in\mathcal{D}'}$ if for each
$\zeta\in\mathcal{D}'$ and  $f^{(n)}\in\mathcal{D}^{\odot n}$,
$n\in\N$,
\begin{equation}
Q(\zeta)\langle S^{(n)},f^{(n)}\rangle=
\langle S^{(n-1)},\mathfrak A(\zeta)f^{(n)}\rangle,\label{Eq25}
\end{equation}
where $\mathfrak A(\zeta)$ is the annihilation operator, see Definition \ref{tee665i4ei}.
\end{definition}

Of course, any basic sequence for a family of delta operators is a
Sheffer sequence for that family of delta operators. 

\begin{theorem}\label{Theorem3}
Let $\left(Q(\zeta)\right)_{\zeta\in\mathcal{D}'}$ be a family of delta operators
and let $(P^{(n)})_{n=0}^\infty$ be its basic sequence, which has the generating
function  \eqref{Eq23}. Let $(S^{(n)})_{n=0}^\infty$ be a monic polynomial sequence  on $\mathcal{D}'$. Then the following
conditions are equivalent:

\begin{enumerate}
\item[{\rm (SS1)}] $(S^{(n)})_{n=0}^\infty$ is a Sheffer sequence for the family
$\left(Q(\zeta)\right)_{\zeta\in\mathcal{D}'}$.

\item[{\rm (SS2)}] There is a unique   operator $T\in\mathbb S(\mathcal P(\mathcal D'))$ such
that, for each $f^{(n)}\in\mathcal{D}^{\odot n}$, $n\in\N_0$,
\begin{equation}
T\langle S^{(n)},f^{(n)}\rangle=\langle P^{(n)},f^{(n)}\rangle.\label{Eq24}
\end{equation}

\item[{\rm (SS3)}] The sequence $(S^{(n)})_{n=0}^\infty$ has the generating function
\begin{equation}\label{ye7ir5p89t}
\sum_{n=0}^\infty\frac{1}{n!}\langle S^{(n)}(\omega),\xi^{\otimes n}\rangle
=\frac{\exp[\langle\omega,A(\xi)\rangle]}{\tau(A(\xi))},\quad\omega\in\mathcal{D}',\ \xi\in\mathcal{D},
\end{equation}
where $A(\xi)\in\mathcal S(\mathcal D,\mathcal D)$ is given by \eqref{ft7r} and $\tau\in\mathcal S(\mathcal D,\R)$ is such that $\tau(0)=1$.

\item[{\rm (SS4)}] For each $n\in\N$ and
$\omega,\zeta\in\mathcal{D}'$,
\begin{equation}\label{tyr65eei7}
S^{(n)}(\omega+\zeta)=\sum_{k=0}^n\binom{n}{k}S^{(k)}(\omega)\odot P^{(n-k)}(\zeta).
\end{equation}

\item[{\rm (SS5)}] There is a $(\rho^{(n)})_{n=0}^\infty\in\mathcal{F}(\mathcal{D}')$ with $\rho^{(0)}=1$
such that, for each $n\in\N$,
\[
S^{(n)}(\omega)=\sum_{k=0}^n\binom{n}{k}\rho^{(k)}\odot P^{(n-k)}(\omega),\quad\omega\in\mathcal{D}'.
\]
\end{enumerate}
\end{theorem}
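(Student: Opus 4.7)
My plan is to close the equivalences through the path
$\text{(SS1)}\Rightarrow\text{(SS4)}\Rightarrow\text{(SS5)}\Rightarrow\text{(SS2)}\Rightarrow\text{(SS1)}$,
together with the detour $\text{(SS5)}\Rightarrow\text{(SS3)}\Rightarrow\text{(SS1)}$. The main ingredients are the polynomial expansion (Proposition~\ref{Lemma5}), the isomorphism theorem (Theorem~\ref{Theorem2} and Corollary~\ref{65er5i458o55}), the generating function identity (BT4) for the basic sequence, and the Appendix's calculus of formal tensor power series.

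For $\text{(SS1)}\Rightarrow\text{(SS4)}$, I fix $\omega,\zeta\in\mathcal D'$, $\xi\in\mathcal D$, $n\in\mathbb N$, and apply Proposition~\ref{Lemma5} to $R := E(\omega)\langle S^{(n)},\xi^{\otimes n}\rangle$ evaluated at $\zeta$. Since each $Q(x_i)$ is a delta operator, it is shift-invariant and commutes with $E(\omega)$, so $(Q(x_1)\cdots Q(x_k)R)(0) = (Q(x_1)\cdots Q(x_k)\langle S^{(n)},\xi^{\otimes n}\rangle)(\omega)$; iterating (SS1) this equals $(n)_k\,\xi(x_1)\cdots\xi(x_k)\langle S^{(n-k)}(\omega),\xi^{\otimes(n-k)}\rangle$. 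Collecting terms yields the scalar form of (SS4) tested against $\xi^{\otimes n}$, and Lemma~\ref{yde6ue6cb}(i) combined with polarization promotes this to (SS4). The step $\text{(SS4)}\Rightarrow\text{(SS5)}$ is then immediate: setting $\omega=0$ and using $P^{(n-k)}(0)=0$ for $n>k$ (Lemma~\ref{Lemma2}) together with $P^{(0)}(0)=1$ yields (SS5) with $\rho^{(k)} := S^{(k)}(0)$ and $\rho^{(0)}=1$.

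For $\text{(SS5)}\Rightarrow\text{(SS3)}$ an interchange of summation gives
$$\sum_{n=0}^\infty\frac{1}{n!}\langle S^{(n)}(\omega),\xi^{\otimes n}\rangle = g(\xi)\exp\bigl[\langle\omega,A(\xi)\rangle\bigr], \qquad g(\xi) := \sum_{k=0}^\infty\tfrac{1}{k!}\langle\rho^{(k)},\xi^{\otimes k}\rangle,$$
where (BT4) is used on the right factor. Since $g(0)=1$, $g$ is a unit in $\mathcal S(\mathcal D,\R)$, and letting $B$ denote the compositional inverse of $A$ (Proposition~\ref{rw4w4w5}) the choice $\tau := (1/g)\circ B$ gives $\tau(A(\xi))=1/g(\xi)$ and $\tau(0)=1$, hence (SS3). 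For the closing leg $\text{(SS3)}\Rightarrow\text{(SS1)}$, I apply $Q(\zeta)$ to (SS3) in the variable $\omega$: the factor $1/\tau(A(\xi))$ is $\omega$-independent, and the basic lowering relation $Q(\zeta)\langle P^{(n)},\xi^{\otimes n}\rangle = n\langle\zeta,\xi\rangle\langle P^{(n-1)},\xi^{\otimes(n-1)}\rangle$ together with (BT4) yields $Q(\zeta)\exp[\langle\omega,A(\xi)\rangle] = \langle\zeta,\xi\rangle\exp[\langle\omega,A(\xi)\rangle]$; comparing $\xi^{\otimes n}$-coefficients and polarizing through Lemma~\ref{yde6ue6cb}(ii) recovers (SS1).

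It remains to tie (SS2) into the cycle. For $\text{(SS5)}\Rightarrow\text{(SS2)}$, the isomorphism theorem (Corollary~\ref{65er5i458o55}) provides a unique $T\in\mathbb S(\mathcal P(\mathcal D'))$ with $\mathcal JT=1/g\in\mathcal S(\mathcal D,\R)$; since $(1/g)(0)=1\neq 0$, Corollary~\ref{gd6ew6g} gives $T$ invertible. Using formula~\eqref{ghcdry6} from the proof of Theorem~\ref{Theorem2} and summing against $\xi^{\otimes n}/n!$ I obtain $TG_P(\omega,\xi) = (\mathcal JT)(\xi)\,G_P(\omega,\xi) = g(\xi)^{-1}G_P(\omega,\xi)$, where $G_P$ is the generating function of $(P^{(n)})$; combined with (SS5) this gives $TG_S(\omega,\xi) = g(\xi)\,TG_P(\omega,\xi) = G_P(\omega,\xi)$, and matching coefficients yields (SS2), uniqueness being built into the isomorphism. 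The converse $\text{(SS2)}\Rightarrow\text{(SS1)}$ is then short: $T1 = 1\neq 0$ gives $T^{-1}\in\mathbb S(\mathcal P(\mathcal D'))$ by Corollary~\ref{gd6ew6g}, and Corollary~\ref{Corollary2} supplies $Q(\zeta)T = TQ(\zeta)$, whence $Q(\zeta)\langle S^{(n)},f^{(n)}\rangle = T^{-1}Q(\zeta)\langle P^{(n)},f^{(n)}\rangle = T^{-1}\langle P^{(n-1)},\mathfrak A(\zeta)f^{(n)}\rangle = \langle S^{(n-1)},\mathfrak A(\zeta)f^{(n)}\rangle$. The most delicate point will be the computation $T(g(\xi)G_P) = g(\xi)\,TG_P$ inside the generating-function identity; this is legitimate because every fixed $\xi^{\otimes n}$-coefficient of $G_S$ is an honest polynomial in $\omega$ on which $T$ acts linearly, and the factor $g(\xi)$ enters as a scalar depending only on $\xi$.
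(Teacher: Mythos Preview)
Your argument is correct and closes all five equivalences, but you have organized the proof quite differently from the paper. The paper pivots around the operator $T$: it first proves $\text{(SS1)}\Leftrightarrow\text{(SS2)}$ by defining $T$ directly via \eqref{Eq24} and checking shift-invariance through the operator expansion theorem, then derives (SS3) by an operator-theoretic computation of $\mathcal J(E(\zeta)T^{-1})$ using the isomorphism built from the \emph{monomials}, and obtains (SS4) from the shift-invariance of $T^{-1}$. You instead go $\text{(SS1)}\Rightarrow\text{(SS4)}$ first, via the polynomial expansion Proposition~\ref{Lemma5} applied to $E(\omega)\langle S^{(n)},\xi^{\otimes n}\rangle$, and reach (SS3) through the elementary generating-function identity $G_S=g\cdot G_P$ combined with the compositional inverse $B$ of $A$; your route to (SS2) then uses the isomorphism $\mathcal J$ built relative to $(P^{(n)})$ itself.

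What each approach buys: your $\text{(SS1)}\Rightarrow\text{(SS4)}$ is somewhat more direct than the paper's detour through $T$, and your $\text{(SS5)}\Rightarrow\text{(SS3)}$ is genuinely more elementary, since it avoids the machinery of computing $\mathcal J(E(\zeta)T^{-1})$ and instead just observes that $g(0)=1$ makes $g$ a unit and that $\tau:=(1/g)\circ B$ does the job. On the other hand, the paper's organization has the advantage that the operator $T$ is constructed once, explicitly, in the $\text{(SS1)}\Rightarrow\text{(SS2)}$ step, and everything else is read off from it; in particular the formula $\tau(\xi)=(\mathcal J T)(\xi)$ with $\mathcal J$ relative to monomials (your $\tau$ is only specified via $\tau(A(\xi))=1/g(\xi)$). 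Two minor remarks: in your $\text{(SS4)}\Rightarrow\text{(SS5)}$ step the invocation of $P^{(n-k)}(0)=0$ is unnecessary---setting $\omega=0$ in \eqref{tyr65eei7} already gives (SS5) directly with $\rho^{(k)}=S^{(k)}(0)$; and in $\text{(SS3)}\Rightarrow\text{(SS1)}$ the termwise application of $Q(\zeta)$ to the formal series is legitimate precisely because $Q(\zeta)$ acts only on the $\omega$-dependence and each $\xi$-degree-$n$ coefficient is an honest polynomial in $\omega$, a point you flag at the end but which belongs equally here.
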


\begin{remark}
For the meaning of the right hand side of formula \eqref{ye7ir5p89t}, see Proposition~\ref{dr6e7i8o} and Remark \ref{drs5w}.
\end{remark}


\begin{proof}[Proof of Theorem \ref{Theorem3}]

(SS1)$\Rightarrow $(SS2).
We define a linear operator $T:\mathcal P(\mathcal D')\to \mathcal P(\mathcal D')$ by formula \eqref{Eq24}. Since $(P^{(n)})_{n=0}^\infty$ and $(S^{(n)})_{n=0}^\infty$ are  monic polynomial sequences on $\mathcal{D}'$, we have $T\in\mathcal L(\mathcal P(\mathcal D'))$,  see formulas \eqref{gtdyf7ti} and \eqref{vgftydf7}. 
Thus, we  only have to prove that $T$ is shift-invariant. For this purpose, fix any $G^{(k)}\in\mathcal D'{}^{\odot k}$, $\xi\in\mathcal D$, and $k\in\mathbb N_0$.  By \eqref{Eq25} and  \eqref{Eq24},  we obtain
\begin{align}
&T\big\langle G^{(k)}(x_1,\dots,x_k), Q(x_1)\dotsm Q(x_k)
\langle S^{(n)},\xi^{\otimes n}\rangle
\big\rangle=(n)_k\langle G^{(k)},\xi^{\otimes k}\rangle T\langle S^{(n-k)},\xi^{\otimes (n-k)}\rangle\notag\\
&\quad =(n)_k\langle G^{(k)},\xi^{\otimes k}\rangle \langle P^{(n-k)},\xi^{\otimes (n-k)}\rangle\notag\\
&\quad = \big\langle G^{(k)}(x_1,\dots,x_k), Q(x_1)\dotsm Q(x_k)
\langle P^{(n)},\xi^{\otimes n}\rangle\big\rangle\notag\\
 &\quad = \big\langle G^{(k)}(x_1,\dots,x_k), Q(x_1)\dotsm Q(x_k)
T\langle S^{(n)},\xi^{\otimes n}\rangle\big\rangle.\notag
\end{align}
Therefore,
\[
T\langle G^{(k)}(x_1,\ldots,x_
k), Q(x_1)\dotsm Q(x_k)\rangle
=\langle G^{(k)}(x_1,\ldots,x_k), Q(x_1)\dotsm Q(x_k)\rangle T.
\]
Hence, by Theorem \ref{Corollary1}, $T$ is shift-invariant.

(SS2)$\Rightarrow$(SS1). By \eqref{Eq24} with $n=0$, we have $T1=1$. Hence, by Corollary \ref{gd6ew6g}, the operator $T$ is invertible and $T^{-1}\in\mathbb S(\mathcal P(\mathcal D'))$.  By Corollary \ref{Corollary2},  $T^{-1}$ commutes with each $Q(\zeta)$,
$\zeta\in\mathcal{D}'$. Therefore, for each $n\in\N$ and  $\xi\in\mathcal{D}$, we get
\begin{align*}
Q(\zeta)\langle S^{(n)},\xi^{\otimes n}\rangle
&=Q(\zeta)T^{-1}\langle P^{(n)},\xi^{\otimes n}\rangle
=T^{-1}Q(\zeta)\langle P^{(n)},\xi^{\otimes n}\rangle\\
&=T^{-1}(n\langle\zeta,\xi\rangle\langle P^{(n-1)},\xi^{\otimes (n-1)}\rangle)
=n\langle\zeta,\xi\rangle\langle S^{(n-1)},\xi^{\otimes (n-1)}\rangle.
\end{align*}

(SS2)$\Rightarrow$(SS3). For a fixed $\zeta\in\mathcal{D}'$, we apply Theorem~\ref{Corollary1} to the shift-invariant operator $E(\zeta)T^{-1}$. This gives
\[
(E(\zeta)T^{-1}P)(\omega)=
\sum_{k=0}^\infty\frac1{k!}
\big\langle G^{(k)}(x_1,\ldots,x_k), (Q(x_1)\dotsm Q(x_k)P)(\omega)\big\rangle,\quad P\in\mathcal P(\mathcal D'),
\]
where
\[
\langle G^{(k)},f^{(k)}\rangle=\left(E(\zeta)T^{-1}\langle P^{(k)},f^{(k)}\rangle\right)(0)=\left(E(\zeta)\langle S^{(k)},f^{(k)}\rangle\right)(0)=\langle S^{(k)}(\zeta),f^{(k)}\rangle.
\]
Thus,
\begin{equation}\label{te6u4}
(E(\zeta)T^{-1}P)(\omega)=
\sum_{k=0}^\infty\frac{1}{k!}
\big\langle S^{(k)}(\zeta) (x_1,\ldots,x_k), (Q(x_1)\dotsm Q(x_k)P)(\omega)\big\rangle,\quad P\in\mathcal P(\mathcal D').
\end{equation}

For the family of delta operators
$(D(\zeta))_{\zeta\in\mathcal{D}'}$ and its basic sequence
(monomials), consider  the isomorphism $\mathcal J$ defined in  Corollary \ref{65er5i458o55}. Hence, by \eqref{ihgigit} and \eqref{te6u4},
\begin{equation}\label{ye649450}
(\mathcal J(E(\zeta)T^{-1}))(\xi)=\sum_{k=0}^\infty\frac{1}{k!}
\left\langle S^{(k)}(\zeta),\left(\sum_{n=1}^\infty\frac{1}{n!}\left(R_{1,n}\xi^{\otimes n}\right)\right)^{\otimes k}\right\rangle.
\end{equation}
Furthermore, by \eqref{bhuyfu},
\begin{equation}
(\mathcal JT)(\xi)=1+\sum_{k=1}^\infty\frac{1}{k!}\langle\tau^{(k)},\xi^{\otimes k}\rangle=:\tau(\xi),\label{Eq26}
\end{equation}
where
\begin{equation}\label{yr7i8756r}
\langle\tau^{(k)},\xi^{\otimes k}\rangle:=\big(T\langle\cdot^{\otimes k},\xi^{\otimes k}\rangle\big)(0),\quad \xi\in\mathcal{D},\ k\in\N.
\end{equation}
(Note that the first term in \eqref{Eq26} is indeed equal to 1, because $T$
maps $1$ into $1$.) By \eqref{Eq20} and the isomorphism theorem,
\begin{equation}\label{yut8o50}
\exp[\langle\zeta,\xi\rangle]=(\mathcal JE(\zeta))(\xi)=\big(\mathcal J(E(\zeta)T^{-1})\big)(\xi)(\mathcal JT)(\xi).\end{equation}
By Proposition \ref{dr6e7i8o} (see also Remark \ref{drs5w}) and \eqref{ye649450}--\eqref{yut8o50}, we get
\begin{equation}\label{tyere49}
\sum_{k=0}^\infty\frac{1}{k!}
\left\langle S^{(k)}(\zeta),\left(\sum_{n=1}^\infty\frac{1}{n!}\left(R_{1,n}\xi^{\otimes n}\right)\right)^{\otimes k}\right\rangle=\frac{\exp[\langle\zeta,\xi\rangle]}{\tau(\xi)},
\end{equation}
compare with \eqref{Eq22}. We define operators $A_k\in\mathcal{L}(\mathcal{D}^{\odot k},\mathcal{D})$ by formula \eqref{buf7r8o} for $k\ge2$ and $ A_1:=\mathbf 1$. 
Thus, $A(\xi)=\sum_{k=1}^\infty A_k\xi^{\otimes k}\in \mathcal S(\mathcal D,\mathcal D)$ is the compositional inverse of $\sum_{n=1}^\infty\frac{1}{n!}R_{1,n}\xi^{\otimes n}\in \mathcal S(\mathcal D,\mathcal D)$. Now formula \eqref{tyere49} implies (SS3).

(SS3)$\Rightarrow$(SS2). For the family of delta operators
$(D(\zeta))_{\zeta\in\mathcal{D}'}$ and its basic sequence
(monomials), we construct the isomorphism $\mathcal J$.
We define an operator $T\in\mathbb S(\mathcal P(\mathcal D'))$ by $T:=\mathcal J^{-1}\tau$.
Since $\tau^{(0)}=1$, by Proposition \ref{dr6e7i8o} and Corollary \ref{65er5i458o55}, there exists an operator $S\in\mathbb S(\mathcal P(\mathcal D'))$ satisfying $ST=TS=\mathbf 1$. Hence, the operator $T$ is invertible and $T^{-1}=S$.

Since $T^{-1}1=1/\tau^{(0)}=1$,  for each $f^{(n)}\in\mathcal D^{\odot n}$, we obtain
\begin{equation}\label{zrw6}
\big(T^{-1}\langle\cdot^{\otimes n},f^{(n)}\rangle\big)(\omega)=
\langle \omega^{\otimes n},f^{(n)}\rangle+\sum_{i=0}^{n-1}\langle \omega^{\otimes i},g^{(i)}\rangle\end{equation}
for some $g^{(i)}\in\mathcal D^{\odot i}$, $i=0,1,\dots,n-1$.

Consider the linear operator
\begin{equation}\label{rds6ew6e}
\mathcal D^{\odot n}\ni f^{(n)}\mapsto  \tilde {\mathbb S}^{(n)}f^{(n)}:=T^{-1}\langle P^{(n)},f^{(n)}\rangle\in\mathcal P^{(n)}(\mathcal D').\end{equation}
Since $T^{-1}\in\mathcal L(\mathcal P(\mathcal D'))$ and satisfies \eqref{zrw6}, and  the linear operator
$$\mathcal D^{\odot n}\ni f^{(n)}\mapsto\langle P^{(n)},f^{(n)}\rangle\in\mathcal P^{(n)}(\mathcal D')$$
is continuous (see Lemma \ref{frt}), we conclude that $\tilde {\mathbb  S}^{(n)}\in\mathcal L(\mathcal D^{\odot n},\mathcal P^{(n)}(\mathcal D')) $. Hence, by Lemma \ref{frt} and  \eqref{zrw6}, there exists a monic polynomial sequence  $(\tilde S^{(n)})_{n=0}^\infty$ that satisfies
\begin{equation}\label{cxtarsra4a}
(\tilde {\mathbb S}^{(n)}f^{(n)})(\omega)=\langle \tilde S^{(n)}(\omega),f^{(n)}\rangle.\end{equation}
 By \eqref{rds6ew6e} and \eqref{cxtarsra4a}, we obtain
$$\langle P^{(n)},f^{(n)}\rangle=T\langle \tilde S^{(n)},f^{(n)}\rangle.$$

It follows from the proof of the implication (SS2)$\Rightarrow$(SS3) that the  monic polynomial sequence
$(\tilde S^{(n)})_{n=0}^\infty$ has the same generating function as
$(S^{(n)})_{n=0}^\infty$, so  they coincide. But this implies \eqref{Eq24}.

Thus, we have proved that the  conditions (SS1), (SS2), and (SS3) are equivalent.

(SS2)$\Rightarrow$(SS4). For $\omega,\zeta\in\mathcal D'$, $\xi\in\mathcal D$, and $n\in\N$ we have
\begin{align*}
\langle S^{(n)}(\omega+\zeta),\zeta^{\otimes n}\rangle&=\big(E(\zeta)\langle S^{(n)},\xi^{\otimes n}\rangle\big)(\omega)\\
&=\big(E(\zeta)T^{-1}\langle P^{(n)},\xi^{\otimes n}\rangle\big)(\omega)\\
&=\big(T^{-1}E(\zeta)\langle P^{(n)},\xi^{\otimes n}\rangle\big)(\omega)\\
&=\big(T^{-1}\langle P^{(n)}(\cdot+\zeta),\xi^{\otimes n}\rangle\big)(\omega)\\
&=\sum_{k=0}^n\binom nk \big(T^{-1}\langle P^{(k)},\xi^{\otimes k}\rangle)(\omega)\langle P^{(n-k)}(\zeta),\xi^{\otimes(n-k)}\rangle\\
&=\sum_{k=0}^n\binom nk \langle S^{(k)}(\omega),\xi^{\otimes k}\rangle\langle P^{(n-k)}(\zeta),\xi^{\otimes(n-k)}\rangle.
\end{align*}

(SS4)$\Rightarrow$(SS5). In formula \eqref{tyr65eei7}, swap $\omega$ and $\zeta$, set $\zeta=0$, and denote $\rho^{(n)}:=S^{(n)}(0)$. Note that $\rho^{(0)}=S^{(0)}(0)=1$.

(SS5)$\Rightarrow$(SS1). For each $\zeta\in\mathcal D'$, $\xi\in\mathcal D$, and $n\in\N$, we get from (SS5)
\begin{align*}
Q(\zeta)\langle S^{(n)},\xi^{\otimes n}\rangle&=\sum_{k=0}^n\binom nk \langle \rho^{(k)},\xi^{\otimes k}\rangle\, Q(\zeta)\langle P^{(n-k)},\xi^{\otimes(n-k)}\rangle\\
&=\sum_{k=0}^{n-1}\binom nk \langle \rho^{(k)},\xi^{\otimes k}\rangle (n-k)\langle\zeta,\xi \rangle\langle P^{(n-k-1)},\xi^{\otimes(n-k-1)}\rangle\\
&=n \langle\zeta,\xi \rangle \sum_{k=0}^{n-1}\binom {n-1}k \langle \rho^{(k)},\xi^{\otimes k}\rangle \langle P^{(n-k-1)},\xi^{\otimes(n-k-1)}\rangle\\
&= n \langle\zeta,\xi \rangle\langle S^{(n-1)},\xi^{\otimes(n-1)}\rangle.\qedhere
\end{align*}\end{proof}

\begin{corollary}\label{vghdf7r8o}
Let the conditions of Theorem \ref{Theorem3} be satisfied. Then  $(S^{(n)})_{n=0}^\infty$ is a Sheffer sequence for the family
$\left(Q(\zeta)\right)_{\zeta\in\mathcal{D}'}$  if and only if the sequence $(S^{(n)})_{n=0}^\infty$ has the generating function
\begin{equation}\label{yr8lio8ty}
\sum_{n=0}^\infty\frac{1}{n!}\langle S^{(n)}(\omega),\xi^{\otimes n}\rangle
=\exp\big[\langle\omega,A(\xi)\rangle- C(A(\xi))\big],\quad\omega\in\mathcal{D}',\ \xi\in\mathcal{D},
\end{equation}
where
$
A(\xi):=\sum_{k=1}^\infty A_k\xi^{\otimes k}\in\mathcal S(\mathcal D,\mathcal D)
$ is
as in \eqref{Eq23} and
$
C(\xi):=\sum_{k=1}^\infty\langle C^{(k)},\xi^{\otimes k}\rangle
\in\mathcal S(\mathcal D,\R)$ is such that $C(0)=0$.
\end{corollary}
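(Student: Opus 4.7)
The plan is to deduce this corollary directly from the equivalence (SS1)$\Leftrightarrow$(SS3) in Theorem~\ref{Theorem3}, by exhibiting a bijective correspondence between the formal series $\tau \in \mathcal S(\mathcal D,\R)$ with $\tau(0)=1$ that appear in (SS3) and the formal series $C \in \mathcal S(\mathcal D,\R)$ with $C(0)=0$ that appear in \eqref{yr8lio8ty}. The bijection is the expected one, namely $C = \log \tau$ (equivalently, $\tau = \exp[C]$), so that
\[
\frac{1}{\tau(A(\xi))} = \exp\bigl[-C(A(\xi))\bigr]
\]
as an identity in $\mathcal S(\mathcal D,\R)$.

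First I would invoke Theorem~\ref{Theorem3} to reduce the claim to showing that the generating function identity \eqref{ye7ir5p89t} (with $\tau(0)=1$) is equivalent to \eqref{yr8lio8ty} (with $C(0)=0$). Since $A(\xi) \in \mathcal S(\mathcal D,\mathcal D)$ has no constant term ($A_0 = 0$ in \eqref{ft7r}), the formal compositions $\tau(A(\xi))$ and $C(A(\xi))$ are well-defined elements of $\mathcal S(\mathcal D,\R)$ by the properties of tensor power series collected in the Appendix; this is the same mechanism that makes the right-hand side of \eqref{ye7ir5p89t} meaningful (cf.\ Proposition~\ref{dr6e7i8o} and Remark~\ref{drs5w}).

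Next I would check that, within the algebra $\mathcal S(\mathcal D,\R)$, the maps
\[
\{\sigma \in \mathcal S(\mathcal D,\R) : \sigma(0)=1\} \;\longleftrightarrow\; \{C \in \mathcal S(\mathcal D,\R) : C(0)=0\}
\]
given by $\sigma \mapsto \log \sigma$ and $C \mapsto \exp[C]$ are mutually inverse bijections. Here $\log(1+R) := \sum_{k\ge1}\frac{(-1)^{k-1}}{k}R^k$ and $\exp[R] := \sum_{k\ge0}\frac{1}{k!}R^k$ are defined by formal series in the variable $R \in \mathcal S(\mathcal D,\R)$ with $R(0)=0$; each evaluation is well-posed because, for each fixed $\xi$, only finitely many terms contribute to the coefficient of any given tensor power. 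The standard formal identities $\exp[\log \sigma] = \sigma$ and $\log \exp[C] = C$ carry over from the one-variable case since $\mathcal S(\mathcal D,\R)$ is a commutative algebra and the manipulations are purely algebraic in the coefficients.

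Finally, applying this bijection to $\tau$ (setting $C := \log \tau$) and composing with $A(\xi)$ gives $\tau(A(\xi)) = \exp[C(A(\xi))]$, hence $1/\tau(A(\xi)) = \exp[-C(A(\xi))]$, which converts \eqref{ye7ir5p89t} into \eqref{yr8lio8ty}. The reverse direction sets $\tau := \exp[C]$ and proceeds identically. The only mild obstacle is the bookkeeping needed to justify the $\log$/$\exp$ calculus inside the formal-series algebra $\mathcal S(\mathcal D,\R)$ under composition with $A(\xi)$; once the Appendix's composition lemma is applied, the rest is formal.
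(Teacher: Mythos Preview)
Your proposal is correct and follows essentially the same route as the paper: reduce to (SS3) of Theorem~\ref{Theorem3}, then use the formal $\log$/$\exp$ bijection between series in $\mathcal S(\mathcal D,\R)$ with constant term $1$ and those with constant term $0$ (the paper defines $C$ as the composition of $\log(1+t)$ with $\tau(\xi)-1$ and invokes Remark~\ref{yjrde7ik87} for associativity), and compose with $A(\xi)$ to pass between $1/\tau(A(\xi))$ and $\exp[-C(A(\xi))]$.
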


\begin{proof}
We note that $\langle \omega,A(\xi)\rangle\in\mathcal S(\mathcal D,\R)$ is the composition of $\langle\omega,\xi\rangle\in\mathcal S(\mathcal D,\R)$ and $A(\xi)\in\mathcal S(\mathcal D,\mathcal D)$.
As easily seen, $\exp[\langle \omega,A(\xi)\rangle]\in\mathcal S(\mathcal D,\R)$ in formula \eqref{ye7ir5p89t} can be understood as the composition of $\exp t=\sum_{n=0}^\infty \frac{t^n}{n!}\in\mathcal S(\R,\R)$ and $\langle \omega,A(\xi)\rangle\in\mathcal S(\mathcal D,\R)$, see Definition~\ref{ydre7i5r8}.

Consider $\log(1+t)=\sum_{n=1}^\infty (-1)^{n+1}\,\frac{t^n}{n}\in\mathcal S(\R,\R)$. Define $C(\xi)\in\mathcal S(\mathcal D,\R)$ as the composition of $\log(1+t)\in \mathcal S(\R,\R)$ and $\tau(\xi)-1\in\mathcal S(\mathcal D,\R)$ (note that $\tau(0)-1=0$). Then $C(0)=0$. By Remark \ref{yjrde7ik87}, we obtain
$\tau(\xi)=\exp(C(\xi))$, the equality in $\mathcal S(\mathcal D,\R)$. Hence, using again Remark \ref{yjrde7ik87}, we conclude that formula \eqref{ye7ir5p89t} can be written as \eqref{yr8lio8ty}.
\end{proof}

\begin{remark}
According to Theorem \ref{Theorem3} and Corollary \ref{vghdf7r8o}, any Sheffer sequence
$(S^{(n)})_{n=0}^\infty$ is completely identified by a family of delta operators,
$\left(Q(\zeta)\right)_{\zeta\in\mathcal{D}'}$,  and a formal
 series $C\in\mathcal S(\mathcal D,\R)$  with $C(0)=0$.\end{remark}

\begin{corollary}\label{uftufr7t}
Under the conditions of Theorem \ref{Theorem3}, assume that
$(S^{(n)})_{n=0}^\infty$ is a Sheffer sequence. Let  $(\varkappa^{(n)})_{n=0}^\infty\in\mathcal{F}(\mathcal{D}')$ with $\varkappa^{(0)}=1$ satisfy
\begin{equation}\label{tye4849}
\tau(A(\xi))=\sum_{k=0}^\infty\frac{1}{k!}\langle\varkappa^{(k)},\xi^{\otimes k}\rangle.
\end{equation}
Then, for each $n\in\N$,
\begin{equation}\label{yd645u7}
P^{(n)}(\omega)=\sum_{k=0}^n\binom{n}{k}\varkappa^{(k)}\odot S^{(n-k)}(\omega),\quad\omega\in\mathcal{D}'.
\end{equation}
\end{corollary}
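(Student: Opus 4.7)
The plan is to work entirely at the level of the formal tensor power series in $\mathcal S(\mathcal D,\mathbb R)$ and recover the claimed identity by reading off coefficients. The starting point is the trivial factorization
\[
\exp[\langle\omega,A(\xi)\rangle]=\tau(A(\xi))\cdot\frac{\exp[\langle\omega,A(\xi)\rangle]}{\tau(A(\xi))},
\]
which is an equality in $\mathcal S(\mathcal D,\mathbb R)$ (note that $\tau(A(\xi))$ is invertible in $\mathcal S(\mathcal D,\mathbb R)$ since its constant term $\tau(0)=1$, by Proposition~\ref{dr6e7i8o}).

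First, I would rewrite each side via the generating functions provided by Theorem~\ref{Theorem1}~(BT4) and Theorem~\ref{Theorem3}~(SS3): the left-hand side becomes $\sum_{n=0}^\infty \frac{1}{n!}\langle P^{(n)}(\omega),\xi^{\otimes n}\rangle$, while the right-hand side is the product in $\mathcal S(\mathcal D,\mathbb R)$ of the two series
\[
\tau(A(\xi))=\sum_{k=0}^\infty\frac{1}{k!}\langle\varkappa^{(k)},\xi^{\otimes k}\rangle
\quad\text{and}\quad
\sum_{m=0}^\infty\frac{1}{m!}\langle S^{(m)}(\omega),\xi^{\otimes m}\rangle,
\]
by definition of $(\varkappa^{(k)})_{k=0}^\infty$ (see \eqref{tye4849}) and by (SS3). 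Next I would compute this product using the multiplication rule \eqref{vut9p} in $\mathcal S(\mathcal D,\mathbb R)$, which after grouping terms of homogeneous tensor degree $n$ gives
\[
\sum_{n=0}^\infty\frac{1}{n!}\left\langle\sum_{k=0}^n\binom{n}{k}\varkappa^{(k)}\odot S^{(n-k)}(\omega),\,\xi^{\otimes n}\right\rangle.
\]

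Finally, equating coefficients and invoking Lemma~\ref{yde6ue6cb}(i) (which states that an element of $\mathcal D'{}^{\odot n}$ is uniquely determined by its pairings with $\xi^{\otimes n}$, $\xi\in\mathcal D$), I conclude that
\[
P^{(n)}(\omega)=\sum_{k=0}^n\binom{n}{k}\varkappa^{(k)}\odot S^{(n-k)}(\omega)
\]
as elements of $\mathcal D'{}^{\odot n}$, which is the required identity \eqref{yd645u7}. There is no real obstacle here; the only point requiring care is to justify that the product on $\mathcal S(\mathcal D,\mathbb R)$ behaves as expected when one of the factors, namely $\tau(A(\xi))$, is itself a composition of formal series---this is already guaranteed by Proposition~\ref{dr6e7i8o} together with the isomorphism~\eqref{ukt896p0781}, so the argument reduces to a clean generating-function manipulation.
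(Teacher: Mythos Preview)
Your proposal is correct and follows essentially the same approach as the paper's own proof: the paper also multiplies the generating function \eqref{ye7ir5p89t} by $\tau(A(\xi))$ to recover \eqref{Eq23}, and then reads off coefficients. Your write-up simply makes explicit a few points (invertibility of $\tau(A(\xi))$, invocation of Lemma~\ref{yde6ue6cb}(i)) that the paper leaves implicit.
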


\begin{proof} By \eqref{Eq23}, (SS3) and \eqref{tye4849},
$$\sum_{n=0}^\infty\frac{1}{n!}\langle P^{(n)}(\omega),\xi^{\otimes n}\rangle=\left(\sum_{k=0}^\infty\frac{1}{k!}\langle\varkappa^{(k)},\xi^{\otimes k}\rangle\right)\left(\sum_{n=0}^\infty\frac{1}{n!}\langle S^{(n)}(\omega),\xi^{\otimes n}\rangle\right),$$
which implies \eqref{yd645u7}. \end{proof}

\begin{corollary}
Under the conditions of Theorem \ref{Theorem3}, assume that
$(S^{(n)})_{n=0}^\infty$ is a Sheffer sequence. Then,
$(S^{(n)})_{n=0}^\infty=(P^{(n)})_{n=0}^\infty$ if and only if $S^{(n)}(0)=0$ for each $n\in\N$.
\end{corollary}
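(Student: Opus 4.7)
The plan is to exploit the equivalent characterization (SS5) of Theorem~\ref{Theorem3}, which expands any Sheffer sequence in the basic sequence with constant coefficients $(\rho^{(k)})_{k=0}^\infty\in\mathcal F(\mathcal D')$ (with $\rho^{(0)}=1$). The key observation, which is already implicit in the proof of the implication $\mathrm{(SS4)}\Rightarrow\mathrm{(SS5)}$ given above, is that these coefficients are precisely the values of the Sheffer sequence at the origin, namely $\rho^{(k)}=S^{(k)}(0)$ for all $k\in\mathbb N_0$.

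For the forward direction, assume $(S^{(n)})_{n=0}^\infty=(P^{(n)})_{n=0}^\infty$. Since $(P^{(n)})_{n=0}^\infty$ is a monic polynomial sequence on $\mathcal D'$ of binomial type, Lemma~\ref{Lemma2} yields $P^{(n)}(0)=0$ for all $n\in\mathbb N$, and therefore $S^{(n)}(0)=0$ for all $n\in\mathbb N$.

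For the backward direction, assume $S^{(n)}(0)=0$ for every $n\in\mathbb N$. By Theorem~\ref{Theorem3}, condition (SS5) holds: there exists $(\rho^{(n)})_{n=0}^\infty\in\mathcal F(\mathcal D')$ with $\rho^{(0)}=1$ such that
\begin{equation*}
S^{(n)}(\omega)=\sum_{k=0}^n\binom{n}{k}\rho^{(k)}\odot P^{(n-k)}(\omega),\quad\omega\in\mathcal D',\ n\in\mathbb N.
\end{equation*}
Setting $\omega=0$ and using $P^{(n-k)}(0)=0$ for $k<n$ together with $P^{(0)}=1$, the sum collapses to $S^{(n)}(0)=\rho^{(n)}$. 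By the hypothesis, $\rho^{(n)}=0$ for all $n\in\mathbb N$, so only the $k=0$ term survives in (SS5), giving $S^{(n)}(\omega)=P^{(n)}(\omega)$ for all $\omega\in\mathcal D'$ and all $n\in\mathbb N_0$.

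The argument is essentially a bookkeeping exercise on top of Theorem~\ref{Theorem3} and Lemma~\ref{Lemma2}; there is no serious obstacle. The only point that requires a moment of care is the identification $\rho^{(k)}=S^{(k)}(0)$, which is immediate by evaluating the expansion in (SS5) at $\omega=0$ and using $P^{(j)}(0)=0$ for $j\ge 1$. An alternative, equally short route would be to invoke Corollary~\ref{vghdf7r8o}: setting $\omega=0$ in the generating function~\eqref{yr8lio8ty} gives $\exp[-C(A(\xi))]=1$ in $\mathcal S(\mathcal D,\R)$, hence $C(A(\xi))=0$; since $A$ has compositional inverse (because $A_1=\mathbf 1$), this forces $C\equiv 0$, so the generating function of $(S^{(n)})_{n=0}^\infty$ coincides with that of $(P^{(n)})_{n=0}^\infty$ in \eqref{Eq23}.
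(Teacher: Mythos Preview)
Your proof is correct and essentially the same as the paper's. The paper invokes (SS4) with $\omega=0$ to obtain the expansion $S^{(n)}(\omega)=P^{(n)}(\omega)+\sum_{k=1}^n\binom{n}{k}S^{(k)}(0)\odot P^{(n-k)}(\omega)$ and reads off both directions from there; this is exactly your (SS5)-based argument with the identification $\rho^{(k)}=S^{(k)}(0)$ made explicit.
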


\begin{proof} By (SS4), for each $n\in\N$ and
$\omega\in\mathcal{D}'$, we have
\[
S^{(n)}(\omega)=P^{(n)}(\omega)+\sum_{k=1}^n\binom{n}{k}S^{(k)}(0)\odot P^{(n-k)}(\omega).
\]
From here the statement follows.
\end{proof}

Let $\mathcal C(\mathcal D')$ denote the cylinder $\sigma$-algebra on $\mathcal D'$, i.e., the minimal $\sigma$-algebra on $\mathcal D'$ with respect to which each monomial $\langle\cdot,\xi\rangle$ ($\xi\in\mathcal D$) is measurable. A probability measure $\mu$ on $(\mathcal D',\mathcal C(\mathcal D'))$ is said to have finite moments if, for any $n\in\mathbb N$ and $f^{(n)}\in\mathcal D^{\odot n}$,
$$ \int_{\mathcal D'} |\langle\omega^{\otimes n}, f^{(n)}\rangle|\,d\mu(\omega)<\infty.$$

We now propose the following definition.

\begin{definition} Let $\mu$ be a probability measure on $(\mathcal D',\mathcal C(\mathcal D'))$ that has finite moments. Let $(S^{(n)})_{n=0}^\infty$ be a polynomial sequence on $\mathcal D'$. The polynomials $(S^{(n)})_{n=0}^\infty$  are said to be {\it orthogonal with respect to $\mu$} if
for any $m,n\in\mathbb N_0$, $m\ne n$, $f^{(m)}\in\mathcal D^{\odot m}$, and $g^{(n)}\in\mathcal D^{\odot n}$,
$$\int_{\mathcal D'}\langle S^{(m)}(\omega),f^{(m)}\rangle \langle S^{(n)}(\omega),g^{(n)}\rangle\,d\mu(\omega)=0.$$
\end{definition}

\begin{corollary}\label{gyure65e}Let $\mu$ be a probability measure on $(\mathcal D',\mathcal C(\mathcal D'))$ that has finite moments. Let $(S^{(n)})_{n=0}^\infty$ be a Sheffer sequence on $\mathcal D'$. Assume that $(S^{(n)})_{n=0}^\infty$ are orthogonal  with respect to $\mu$.
 Then the corresponding operator
 $T$ has the following representation:
\begin{equation}\label{dre6u3i7}
(TP)(\omega)=\int_{\mathcal D'}P(\omega+\zeta)\,d\mu(\zeta),\quad P\in\mathcal P(\mathcal D').\end{equation}
Furthermore, for $f^{(n)}\in\mathcal D^{\odot n}$, $n\in\mathbb N$, we have
\begin{equation}\label{cftres5yhte56}
\langle \tau^{(n)},f^{(n)}\rangle=\int_{\mathcal D'}\langle\omega^{\otimes n},f^{(n)}\rangle
\,d\mu(\omega).
\end{equation}
Here, $\tau(\xi)=1+\sum_{n=1}^\infty\frac1{n!}\,\langle\tau^{(n)},\xi^{\otimes n}\rangle\in\mathcal S(\mathcal D,\R)$ is as in \eqref{ye7ir5p89t}.
\end{corollary}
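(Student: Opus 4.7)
Define the candidate operator
\[
(UP)(\omega):=\int_{\mathcal D'}P(\omega+\zeta)\,d\mu(\zeta),\qquad P\in\mathcal P(\mathcal D').
\]
My first task is to verify that $U\in\mathcal L(\mathcal P(\mathcal D'))$ and that $U$ is shift-invariant. Writing $P=\sum_{k=0}^n\langle\cdot^{\otimes k},f^{(k)}\rangle$ and expanding $(\omega+\zeta)^{\otimes k}=\sum_{j=0}^k\binom{k}{j}\omega^{\otimes j}\odot\zeta^{\otimes(k-j)}$, the finite-moment assumption guarantees that each integral in $\zeta$ produces a polynomial in $\omega$ of degree $\le n$ whose coefficients are elements of $\mathcal D'^{\odot j}$; continuity in the nuclear topology then follows componentwise from this same finite-moment bound. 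Shift-invariance of $U$ is immediate from translation invariance of the integral.

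The heart of the argument is to show $U=T$. Since $T(S^{(0)})=T(1)=1\neq 0$, Corollary~\ref{gd6ew6g} yields $T^{-1}\in\mathbb S(\mathcal P(\mathcal D'))$, so $V:=UT^{-1}$ lies in $\mathbb S(\mathcal P(\mathcal D'))$ as well. I then apply the Operator Expansion Theorem~\ref{Corollary1} to $V$ relative to the basic sequence $(P^{(n)})_{n=0}^\infty$ with its lowering operators $(Q(\zeta))_{\zeta\in\mathcal D'}$. The coefficients read
\[
\langle G^{(k)},f^{(k)}\rangle=(V\langle P^{(k)},f^{(k)}\rangle)(0)=(U\langle S^{(k)},f^{(k)}\rangle)(0)=\int_{\mathcal D'}\langle S^{(k)}(\zeta),f^{(k)}\rangle\,d\mu(\zeta).
\]
For $k=0$ this equals $f^{(0)}$ because $S^{(0)}=1$ and $\mu$ is a probability measure, so $G^{(0)}=1$. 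For $k\ge 1$, orthogonality of $(S^{(n)})_{n=0}^\infty$ with respect to $\mu$, combined with $S^{(0)}=1$, forces $\int\langle S^{(k)}(\zeta),f^{(k)}\rangle\,d\mu(\zeta)=0$. Thus the expansion \eqref{Eq12} collapses to $(VP)(\omega)=P(\omega)$, so $V=\mathbf 1$ and $U=T$, which establishes \eqref{dre6u3i7}.

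Formula \eqref{cftres5yhte56} is then an immediate consequence. By the definition \eqref{yr7i8756r} of $\tau^{(n)}$ and the identity $T=U$,
\[
\langle\tau^{(n)},\xi^{\otimes n}\rangle=\bigl(T\langle\cdot^{\otimes n},\xi^{\otimes n}\rangle\bigr)(0)=\int_{\mathcal D'}\langle\omega^{\otimes n},\xi^{\otimes n}\rangle\,d\mu(\omega),\qquad \xi\in\mathcal D.
\]
Both sides define linear functionals of $f^{(n)}\in\mathcal D^{\odot n}$ (the right-hand side is well-defined on all of $\mathcal D^{\odot n}$ by finite moments and is continuous by the same componentwise argument used for $U$), and they agree on the total set $\{\xi^{\otimes n}:\xi\in\mathcal D\}$. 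Hence Remark~\ref{t5768909} and Lemma~\ref{yde6ue6cb}(i) extend the identity to arbitrary $f^{(n)}\in\mathcal D^{\odot n}$, giving \eqref{cftres5yhte56}.

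\textbf{Main obstacle.} The only genuinely technical point is the verification that $U\in\mathcal L(\mathcal P(\mathcal D'))$ in the nuclear topology, equivalently that the moment functionals $g^{(k)}\mapsto\int_{\mathcal D'}\langle\zeta^{\otimes k},g^{(k)}\rangle\,d\mu(\zeta)$ define elements of $\mathcal D'^{\odot k}$ and not merely algebraic linear functionals. This is not automatic in the infinite-dimensional setting, but it follows routinely from the finite-moment hypothesis applied componentwise along the isomorphism $\mathcal P(\mathcal D')\cong\mathcal F_{\mathrm{fin}}(\mathcal D)$. Everything else reduces to a single clever use of orthogonality together with the operator expansion theorem.
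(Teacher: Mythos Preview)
Your argument is correct, but the paper's proof is more direct and avoids most of the machinery you invoke. Rather than introducing $V=UT^{-1}$ and appealing to the Operator Expansion Theorem, the paper simply verifies \eqref{dre6u3i7} on the spanning family $\langle S^{(n)},\xi^{\otimes n}\rangle$ using (SS4): expanding
\[
\int_{\mathcal D'}\langle S^{(n)}(\omega+\zeta),\xi^{\otimes n}\rangle\,d\mu(\zeta)
=\sum_{k=0}^n\binom nk\langle P^{(n-k)}(\omega),\xi^{\otimes(n-k)}\rangle\int_{\mathcal D'}\langle S^{(k)}(\zeta),\xi^{\otimes k}\rangle\,d\mu(\zeta),
\]
orthogonality kills every term with $k\ge 1$, leaving $\langle P^{(n)}(\omega),\xi^{\otimes n}\rangle=(T\langle S^{(n)},\xi^{\otimes n}\rangle)(\omega)$. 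This establishes $U=T$ directly on a spanning set, so the separate verification that $U$ is continuous and shift-invariant, as well as the detour through Corollary~\ref{gd6ew6g} and Theorem~\ref{Corollary1}, become unnecessary. Your approach has the merit of being more structural --- it makes transparent that the identity $U=T$ is equivalent to the vanishing of all the expansion coefficients $G^{(k)}$ for $k\ge 1$ --- but it also forces you to confront the continuity of $U$ as a genuine technical point, which the paper's direct computation sidesteps. Both routes, of course, rest on exactly the same key observation: $\int_{\mathcal D'}\langle S^{(k)}(\zeta),f^{(k)}\rangle\,d\mu(\zeta)=0$ for $k\ge 1$.
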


\begin{remark}
Formula \eqref{cftres5yhte56} states that each $\tau^{(n)}$ is the $n$th moment of the orthogonality measure $\mu$.
\end{remark}

\begin{proof}[Proof of Corollary \ref{gyure65e}] Note that formula \eqref{dre6u3i7} holds for $P=1$. Now, for each $n\in\mathbb N$ and $\xi\in\mathcal D$, we obtain  by (SS4):
\begin{align*}
\int_{\mathcal D'}\langle S^{(n)}(\omega+\zeta),\xi^{\otimes n}\rangle\,d\mu(\zeta)&=\sum_{k=0}^n \binom nk \langle P^{(n-k)}(\omega),\xi^{\otimes(n-k)}\rangle\int_{\mathcal D'}\langle S^{(k)}(\zeta),\xi^{\otimes k}\rangle\,d\mu(\zeta)\\
&=\langle  P^{(n)}(\omega),\xi^{\otimes n}\rangle
=\big(T\langle S^{(n)},\xi^{\otimes n}\rangle\big)(\omega).\qedhere
\end{align*}
Formula \eqref{cftres5yhte56} follows immediately from \eqref{yr7i8756r} and \eqref{dre6u3i7}.
\end{proof}

\begin{remark}
Note that, in the proof of Corollary \ref{gyure65e}, we only use the fact that
$\int_{\mathcal D'}\langle S^{(n)}(\omega),f^{(n)}\rangle\,d\mu(\omega)=0$ for all $n\in\mathbb N$.
\end{remark}

\begin{corollary}Assume that the conditions of Corollary \ref{gyure65e} are satisfied. Assume that there exists $\mathcal O\subset \mathcal D$, an  open neighborhood  of zero in $\mathcal D$, such that, for all $\xi\in\mathcal O$,
$$\int_{\mathcal D'} \exp\big[|\langle\omega,\xi\rangle|\big]\,d\mu(\omega)<\infty.$$
Then, for all $\xi\in\mathcal O$,
\begin{equation}\label{vut7}
\tau(\xi)=\int_{\mathcal D'}\exp\big[\langle\omega,\xi\rangle\big]\,d\mu(\omega),
\end{equation}
i.e., $\tau(\xi)$ is the Laplace transform of the measure $\mu$.
\end{corollary}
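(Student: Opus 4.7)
The plan is to use formula \eqref{cftres5yhte56} from the previous corollary, which identifies the coefficients $\tau^{(n)}$ as the moments of $\mu$, and then apply Fubini--Tonelli to interchange the summation defining $\tau(\xi)$ with the integral against $\mu$. The integrability hypothesis is precisely what makes this interchange legitimate for $\xi \in \mathcal O$.

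First, I would apply \eqref{cftres5yhte56} to the particular test function $f^{(n)}=\xi^{\otimes n}$ to obtain
$$\langle \tau^{(n)},\xi^{\otimes n}\rangle=\int_{\mathcal D'}\langle\omega,\xi\rangle^n\,d\mu(\omega),\qquad n\in\mathbb N,$$
with the case $n=0$ reading $\tau^{(0)}=1=\mu(\mathcal D')$. Next, I would establish absolute convergence of the series defining $\tau(\xi)$ at each $\xi\in\mathcal O$ by applying Tonelli's theorem to the positive integrand:
$$\sum_{n=0}^\infty \frac{1}{n!}\,|\langle \tau^{(n)},\xi^{\otimes n}\rangle|\le \sum_{n=0}^\infty \frac{1}{n!}\int_{\mathcal D'}|\langle\omega,\xi\rangle|^n\,d\mu(\omega)=\int_{\mathcal D'}\exp[|\langle\omega,\xi\rangle|]\,d\mu(\omega)<\infty$$
by the hypothesis on $\mathcal O$.

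With absolute convergence in hand, Fubini's theorem justifies exchanging the sum and the integral, giving
$$\tau(\xi)=\sum_{n=0}^\infty \frac{1}{n!}\,\langle \tau^{(n)},\xi^{\otimes n}\rangle=\int_{\mathcal D'}\sum_{n=0}^\infty \frac{\langle\omega,\xi\rangle^n}{n!}\,d\mu(\omega)=\int_{\mathcal D'}\exp[\langle\omega,\xi\rangle]\,d\mu(\omega),$$
which is \eqref{vut7}. There is essentially no obstacle here: the argument is a direct consequence of Corollary \ref{gyure65e} combined with Fubini--Tonelli. The only subtlety worth noting is measurability of $\omega\mapsto\langle\omega,\xi\rangle$ with respect to the cylinder $\sigma$-algebra $\mathcal C(\mathcal D')$, which holds by the very definition of $\mathcal C(\mathcal D')$, so that each $\langle\omega,\xi\rangle^n$ and the exponential $\exp[\langle\omega,\xi\rangle]$ are $\mathcal C(\mathcal D')$-measurable and the integrals above make sense.
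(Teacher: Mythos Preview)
Your argument is correct and matches the paper's approach: the paper's proof simply cites formula \eqref{cftres5yhte56} together with the dominated convergence theorem, which is exactly the content of your Fubini--Tonelli computation (the partial sums being dominated by $\exp[|\langle\omega,\xi\rangle|]$). The only difference is that you have spelled out the details the paper leaves implicit.
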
 

\begin{proof}
Formula \eqref{vut7} follows from \eqref{cftres5yhte56} and the dominated convergence theorem. 
\end{proof}

Recall that a Sheffer sequence on $\R$ whose delta operator is the operator of differentiation is called an Appell sequence on $\R$.

\begin{definition}
Let $(S^{(n)})_{n=0}^\infty$ be a Sheffer sequence for the family of delta operators $(Q(\zeta))_{\zeta\in\mathcal D'}=(D(\zeta))_{\zeta\in\mathcal D'}$. Then we call $(S^{(n)})_{n=0}^\infty$ an {\it Appell sequence on $\mathcal D'$}.
\end{definition}

 By \eqref{Eq23}, $A(\xi)=\xi$ in the case of an Appell sequence. Hence, an Appell sequence $(S^{(n)})_{n=0}^\infty$ has the generating function
\[
\sum_{n=0}^\infty\frac{1}{n!}\langle S^{(n)}(\omega),\xi^{\otimes n}\rangle
=\exp\big[\langle\omega,\xi\rangle- C(\xi)\big],\quad\omega\in\mathcal{D}',\ \xi\in\mathcal{D}.
\]

\section{Lifting of Sheffer sequences on $\R$}\label{Subsection3.4}
We can extend the procedure described in Section \ref{Subsection3.2} to a
lifting of Sheffer sequences on $\R$. Let $(s_n)_{n=0}^\infty$ be a Sheffer sequence of monic polynomials on $\R$ for the delta operator $Q$, i.e.,  $Qs_n=ns_{n-1}$ for each $n\in\N_0$. Thus, $Q$ has representation \eqref{u8tr85t} and the polynomial sequence $(s_n)_{n=0}^\infty$ has the generating function
$$
\sum_{n=0}^\infty\frac{u^n}{n!}s_n(t)=\exp[ta(u)-c(a(u))],
$$
where  $a(u)\in\mathcal S(\R,\R)$ is given by \eqref{ftuc} (being the compositional inverse of the $q$ given by \eqref{cdufti7wrt}) and
$$ c(u)=\sum_{k=1}^\infty c_k u^k\in\mathcal S(\R,\R).$$

We now consider the family of delta operators, $(Q(\zeta))_{\zeta\in\mathcal D'}$, given
by \eqref{gyutru8t8}. Then $A(\xi)$ is given by \eqref{yte6790p0}. Furthermore, for $k\in\mathbb N$, we define $C^{(k)}\in\mathcal D'^{\odot k}$ by
$$\langle C^{(k)}, f^{(k)}\rangle:= c_k\langle \mathbb D_k f^{(k)}\rangle,\quad f^{(k)}\in\mathcal D^{\odot k}.$$
Here, the operator $\mathbb D_k$ is defined by \eqref{sew46u} and for $\xi\in\mathcal D$, we denote $\langle \xi\rangle:=\int_{\R^d}\xi(x)\,dx$. Thus, for $\xi\in\mathcal D$, we define
$$ C(\xi):=\sum_{k=1}^\infty \langle C^{(k)},\xi^{\otimes k}\rangle=\sum_{k=1}^\infty c_k\langle \xi^k\rangle=:\langle c(\xi)\rangle. $$
We now consider the Sheffer sequence $(S^{(n)})_{n=0}^\infty$ for the family of delta operators
$\left(Q(\zeta)\right)_{\zeta\in\mathcal{D}'}$ that has the generating function
\begin{equation}\label{cye56o}
\sum_{n=0}^\infty\frac{1}{n!}\langle S^{(n)}(\omega),\xi^{\otimes n}\rangle
=\exp\big[\langle\omega,a(\xi)\rangle- \langle c(a(\xi))\rangle\big],\quad\omega\in\mathcal{D}',\ \xi\in\mathcal{D},
\end{equation}
see \eqref{uytr6d} and Corollary \ref{vghdf7r8o}.
Thus, we may think of the Sheffer sequence $(S^{(n)})_{n=0}^\infty$ on $\mathcal D'$ as the lifting of the Sheffer sequence $(s_n)_{n=0}^\infty$ on $\R$.

\begin{proposition}\label{iut87o69}
Let $(S^{(n)})_{n=0}^\infty$ be a Sheffer sequence with generating function \eqref{cye56o}. Then, for each $n\in\mathbb N$, $\rho^{(n)}:=S^{(n)}(0)\in\mathcal D'{}^{\odot n}$ satisfies
\begin{equation}\label{vutr68l909}
\langle\rho^{(n)},\xi^{\otimes n}\rangle=\sum_{\pi\in\mathfrak P(n)}\prod_{B\in\pi}\lambda_{|B|}\langle\xi^{|B|}\rangle,\quad\xi\in\mathcal D,
\end{equation}
where $\lambda_k\in\R$ are defined by
 \begin{equation}\label{kytdk7i89}
\lambda(u):=-c(a(u))=\sum_{k=1}^\infty \frac{\lambda_k}{k!}\,u^k,\quad u\in\R.\end{equation}
Furthermore,
\begin{equation}\label{yur8o6rt9}
\langle S^{(n)}(\omega),\xi^{\otimes n}\rangle=\sum_{k=0}^n\binom{n}{k}\langle \rho^{(k)},\xi^{\otimes k}\rangle \langle P^{(n-k)}(\omega),\xi^{\otimes(n-k)}\rangle,\quad\omega\in\mathcal{D}',\xi\in\mathcal D,
\end{equation}
where $\langle P^{n}(\omega),\xi^{\otimes n}\rangle$ is given by formula \eqref{yrd6iei}.

\end{proposition}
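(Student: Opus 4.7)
The plan is straightforward because both formulas reduce, via tools already developed, to routine applications of earlier results. Formula \eqref{yur8o6rt9} is essentially the statement (SS5) of Theorem~\ref{Theorem3} applied to our Sheffer sequence $(S^{(n)})_{n=0}^\infty$. Indeed, the proof of (SS4)$\Rightarrow$(SS5) given in Theorem~\ref{Theorem3} explicitly identifies the sequence appearing in (SS5) as $\rho^{(n)}:=S^{(n)}(0)$. Pairing the resulting identity with $\xi^{\otimes n}$ then yields \eqref{yur8o6rt9}. (If desired, one may at this point substitute the combinatorial expression for $\langle P^{(n-k)}(\omega),\xi^{\otimes(n-k)}\rangle$ from Proposition~\ref{Proposition2}, but strictly speaking the statement is already in the required form.)

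For \eqref{vutr68l909}, the first step is to set $\omega=0$ in the generating function \eqref{cye56o} of $(S^{(n)})_{n=0}^\infty$, which gives
\begin{equation*}
\sum_{n=0}^\infty\frac{1}{n!}\langle\rho^{(n)},\xi^{\otimes n}\rangle
=\exp\bigl[-\langle c(a(\xi))\rangle\bigr].
\end{equation*}
The next step is to identify the exponent. Viewing $-c(a(u))\in\mathcal S(\R,\R)$ as a formal series in $u$, this is by definition $\lambda(u)=\sum_{k\ge1}(\lambda_k/k!)u^k$ from \eqref{kytdk7i89}. Applying the functional $\langle\cdot\rangle$ coefficient-wise via $\xi^k\mapsto\langle\xi^k\rangle$ gives $-\langle c(a(\xi))\rangle=\sum_{k\ge1}(\lambda_k/k!)\langle\xi^k\rangle$, an element of $\mathcal S(\mathcal D,\R)$.

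The final step is to expand $\exp[\,\cdot\,]=\sum_{m\ge0}(\,\cdot\,)^m/m!$ and apply the multinomial theorem exactly as in the proof of Proposition~\ref{Proposition2}. One arrives at
\begin{equation*}
\sum_{n=0}^\infty\frac{1}{n!}\langle\rho^{(n)},\xi^{\otimes n}\rangle
=\sum_{n=0}^\infty\sum_{\substack{(j_1,\ldots,j_n)\in\N_0^n\\ j_1+2j_2+\cdots+nj_n=n}}
\frac{1}{\prod_k j_k!\,(k!)^{j_k}}\prod_k\lambda_k^{j_k}\langle\xi^k\rangle^{j_k}.
\end{equation*}
Extracting the coefficient of $1/n!$ and using the classical identity that the number of set partitions of $\{1,\dots,n\}$ with exactly $j_k$ blocks of size $k$ equals $n!/\prod_k\bigl(j_k!\,(k!)^{j_k}\bigr)$ converts the inner sum into a sum over $\pi\in\mathfrak P(n)$, delivering \eqref{vutr68l909}.

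The only technical point requiring care, and the closest thing to an obstacle, is verifying that the substitution $\xi\mapsto-c(a(\xi))$ and the subsequent composition with $\exp$ are legitimate operations in $\mathcal S(\mathcal D,\R)$; this is handled by the framework of formal tensor power series described in the Appendix and already invoked in Corollary~\ref{vghdf7r8o}. Beyond that, the argument parallels Proposition~\ref{Proposition2} verbatim, with $\omega$ replaced by the trivial distribution and $\alpha_{|B|}\langle\omega,\xi^{|B|}\rangle$ replaced by $\lambda_{|B|}\langle\xi^{|B|}\rangle$.
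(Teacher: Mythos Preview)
Your proposal is correct and follows essentially the same route as the paper: setting $\omega=0$ in \eqref{cye56o} to obtain the generating function for $(\rho^{(n)})$, then expanding the exponential into a partition sum (the paper phrases this step as an appeal to Fa\`a di Bruno's formula, while you spell out the multinomial/partition combinatorics as in Proposition~\ref{Proposition2}); and deducing \eqref{yur8o6rt9} from (SS5) with $\rho^{(n)}=S^{(n)}(0)$.
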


\begin{proof} By \eqref{cye56o} with $\omega=0$ and \eqref{kytdk7i89}, we have
\begin{equation}\label{bu7t8p9}
\sum_{n=0}^\infty\frac1{n!}\,\langle \rho^{(n)},\xi^{\otimes n}\rangle=\exp\left[\sum_{k=1}^\infty\frac{\lambda_k}{k!}\langle\xi^k\rangle\right].\end{equation}
Using Fa\`a di Bruno's formula for the $n$th derivative of composition of functions, we deduce \eqref{vutr68l909} from \eqref{bu7t8p9}. Formula \eqref{yur8o6rt9} immediately follows from
(SS4) with $\omega=0$ (or (SS5)), \eqref{bhgyut67rt},
 and Proposition~\ref{Proposition2}.
\end{proof}

We can also write down the result of Proposition \ref{iut87o69} in the following form. By a {\it marked partition of the set $\{1,2,\dots,n\}$} we will mean a pair $(\pi,\mathfrak m_\pi)$ in which
 $\pi=\{B_1,B_2,\dots,B_k\}\in\mathfrak P(n)$ and  $\mathfrak m_\pi:\pi\to\{-,+\}$.
 (The value $\mathfrak m_\pi(B_i)\in\{-,+\}$ may be interpreted as the mark of the element $B_i$ of the partition $\pi$).
We will denote by $\mathfrak {MP}(n)$ the collection of all marked partitions of $\{1,2,\dots,n\}$.

\begin{corollary}\label{uyt8p}
Let $(S^{(n)})_{n=0}^\infty$ be a Sheffer sequence with generating function \eqref{cye56o}. Then, for each $n\in\mathbb N$, $\omega,\in\mathcal D'$, and $\xi\in\mathcal D$,
$$\langle S^{(n)}(\omega),\xi^{\otimes n}\rangle=\sum_{(\pi,\mathfrak m_\pi)\in\mathfrak{MP}(n)}
\left(\prod_{B\in\pi:\, \mathfrak m_\pi(B)=+}\alpha_{|B|}\langle \omega,\xi^{|B|}\rangle\right)\left(\prod_{B\in\pi:\, \mathfrak m_\pi(B)=-}\lambda_{|B|}\langle\xi^{|B|}\rangle\right),
$$
see formula \eqref{tydfrdrs5s} for the definition of $\alpha_k$.
\end{corollary}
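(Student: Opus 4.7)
The plan is to derive Corollary \ref{uyt8p} directly from formula \eqref{yur8o6rt9} of Proposition \ref{iut87o69}, by substituting in the explicit set-partition formulas for both factors and then reorganizing the resulting sum combinatorially. The essential observation is that a marked partition of $\{1,\dots,n\}$ decomposes canonically into a subset (the union of its $-$-blocks) together with unordered partitions of that subset and of its complement.

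More precisely, I would start from
\[
\langle S^{(n)}(\omega),\xi^{\otimes n}\rangle=\sum_{k=0}^n\binom{n}{k}\langle \rho^{(k)},\xi^{\otimes k}\rangle \,\langle P^{(n-k)}(\omega),\xi^{\otimes(n-k)}\rangle
\]
and plug in formula \eqref{vutr68l909} for $\langle\rho^{(k)},\xi^{\otimes k}\rangle$ and formula \eqref{yrd6iei} for $\langle P^{(n-k)}(\omega),\xi^{\otimes(n-k)}\rangle$, yielding
\[
\langle S^{(n)}(\omega),\xi^{\otimes n}\rangle
=\sum_{k=0}^n\binom{n}{k}\sum_{\pi^-\in\mathfrak P(k)}\prod_{B\in\pi^-}\lambda_{|B|}\langle\xi^{|B|}\rangle\sum_{\pi^+\in\mathfrak P(n-k)}\prod_{B\in\pi^+}\alpha_{|B|}\langle\omega,\xi^{|B|}\rangle.
\]

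The next step is the combinatorial identification. For fixed $k$, specifying a marked partition $(\pi,\mathfrak m_\pi)\in\mathfrak{MP}(n)$ whose $-$-blocks cover a total of $k$ points is the same as specifying a $k$-subset $S\subset\{1,\dots,n\}$, a set partition $\pi^-$ of $S$ (the $-$-blocks), and a set partition $\pi^+$ of $\{1,\dots,n\}\setminus S$ (the $+$-blocks). Because the summand depends only on the multiset of block sizes, not on the specific subset $S$, summing over all choices of $S$ simply produces the factor $\binom{n}{k}$. Thus the double combinatorial sum above is exactly
\[
\sum_{(\pi,\mathfrak m_\pi)\in\mathfrak{MP}(n)}\left(\prod_{B\in\pi:\,\mathfrak m_\pi(B)=+}\alpha_{|B|}\langle\omega,\xi^{|B|}\rangle\right)\left(\prod_{B\in\pi:\,\mathfrak m_\pi(B)=-}\lambda_{|B|}\langle\xi^{|B|}\rangle\right),
\]
which is the claim.

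There is no real technical obstacle here; the only point that requires a moment of care is checking the bijection between the indexing sets, namely that $\{(k,S,\pi^-,\pi^+):|S|=k,\,\pi^-\in\mathfrak P(S),\,\pi^+\in\mathfrak P(\{1,\dots,n\}\setminus S)\}$ is in bijection with $\mathfrak{MP}(n)$ via $(\pi,\mathfrak m_\pi)\mapsto(|S|,S,\pi^-,\pi^+)$ where $S$ is the union of the $-$-blocks. Once this is in place, the rest is bookkeeping and the corollary follows.
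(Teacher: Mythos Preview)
Your proposal is correct and is essentially the same argument the paper has in mind: the paper's proof says only ``Immediate from Proposition~\ref{iut87o69},'' and what you have written is precisely the unpacking of that immediacy, substituting \eqref{vutr68l909} and \eqref{yrd6iei} into \eqref{yur8o6rt9} and then invoking the obvious bijection between $\mathfrak{MP}(n)$ and triples $(S,\pi^-,\pi^+)$.
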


\begin{proof}
Immediate from  Proposition \ref{iut87o69}.
\end{proof}

Using Proposition~\ref{iut87o69}, we can now immediately extend Corollary \ref{f675ew} to the case of a lifted Sheffer sequence.

\begin{corollary} \label{stsw5}
Let $(S^{(n)})_{n=0}^\infty$ be
a Sheffer sequence with generating function
\eqref{cye56o}. Then, for each $\eta\in \mathbb M(\R^d)$ and $n\in\mathbb N$, we have $S^{(n)}(\eta)\in\mathbb M_{\mathrm{sym}}((\R^d)^n)$. Furthermore, for each $\Lambda\in\mathcal B_0(\R^d)$, and $n\in\mathbb N$, we have
\begin{equation}\label{teei785}
\big(S^{(n)}(\eta))(\Lambda^n)=\tilde s_n(\eta(\Lambda)),
\end{equation}
 where $(\tilde s_n)_{n=0}^\infty$ is the Sheffer sequence on $\R$ with generating function $$
\sum_{n=0}^\infty\frac{u^n}{n!}\tilde s_n(t)=\exp\big[ta(u)-\operatorname{vol}(\Lambda)c(a(u))\big],$$
where $\operatorname{vol}(\Lambda):=\int_{\Lambda}dx$. In particular, $(\tilde s_n)_{n=0}^\infty=(s_n)_{n=0}^\infty$ if $\operatorname{vol}(\Lambda)=1.$
\end{corollary}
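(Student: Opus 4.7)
The plan is to mimic the two-step pattern of Corollary~\ref{f675ew}: first verify that $S^{(n)}(\eta)\in\mathbb M_{\mathrm{sym}}((\R^d)^n)$, and then extract its value on $\Lambda^n$ by substituting $\xi=u\chi_\Lambda$ into the generating function \eqref{cye56o}.

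For the measure-theoretic part, I will use condition (SS5) of Theorem~\ref{Theorem3},
\[
S^{(n)}(\eta)=\sum_{k=0}^n\binom{n}{k}\,\rho^{(k)}\odot P^{(n-k)}(\eta),
\]
together with Corollary~\ref{f675ew}, which already gives $P^{(n-k)}(\eta)\in\mathbb M_{\mathrm{sym}}((\R^d)^{n-k})$. It then remains to show $\rho^{(k)}\in\mathbb M_{\mathrm{sym}}((\R^d)^k)$: formula \eqref{vutr68l909} expresses $\langle\rho^{(k)},\xi^{\otimes k}\rangle$ as a finite linear combination of products of terms $\langle\xi^{|B|}\rangle=\langle\mathbb D_{|B|}^*\,dx,\xi^{\otimes|B|}\rangle$, each the pairing with a symmetric Radon measure supported on the diagonal of $(\R^d)^{|B|}$. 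Taking symmetric tensor products of the summands then keeps us inside $\mathbb M_{\mathrm{sym}}((\R^d)^n)$, and each such measure has finite variation on bounded sets.

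For the second part, the formal calculation is clean. With $\xi=u\chi_\Lambda$, the pointwise identity $\chi_\Lambda^k=\chi_\Lambda$ together with the vanishing of $a$ and $c$ at zero gives
\[
\langle\eta,a(u\chi_\Lambda)\rangle=\eta(\Lambda)\,a(u),\qquad \langle c(a(u\chi_\Lambda))\rangle=\operatorname{vol}(\Lambda)\,c(a(u)),
\]
so that \eqref{cye56o} becomes
\[
\sum_{n=0}^\infty\frac{u^n}{n!}\bigl(S^{(n)}(\eta)\bigr)(\Lambda^n)=\exp\bigl[\eta(\Lambda)\,a(u)-\operatorname{vol}(\Lambda)\,c(a(u))\bigr],
\]
which is exactly the defining generating function of $(\tilde s_n)$ evaluated at $t=\eta(\Lambda)$. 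Matching coefficients of $u^n$ then yields \eqref{teei785}.

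The one non-formal step, and the main obstacle, is feeding $\xi=u\chi_\Lambda$ into \eqref{cye56o}, which is stated only for $\xi\in\mathcal D$. As in the proof of Corollary~\ref{f675ew}, I would approximate $\chi_\Lambda$ by a uniformly bounded sequence $(\xi_m)\subset\mathcal D$, supported in a fixed bounded neighborhood of $\Lambda$ and converging pointwise $|S^{(n)}(\eta)|$-a.e.\ to $\chi_\Lambda$ (adjusting $\Lambda$ slightly if needed so that $|S^{(n)}(\eta)|$ does not charge $\partial\Lambda^n$; the general case follows by inner/outer regularity). Dominated convergence applied separately for each fixed $n$ on both sides of \eqref{cye56o}, using the finite variation of $S^{(n)}(\eta)$ on bounded sets established above, then transports the identity from $\xi_m^{\otimes n}$ to $\chi_{\Lambda^n}$ in the limit, completing the argument.
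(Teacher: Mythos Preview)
Your proposal is correct and follows exactly the route the paper indicates: the paper gives no explicit proof for this corollary, stating only that one uses Proposition~\ref{iut87o69} to extend the argument of Corollary~\ref{f675ew}, which is precisely what you do via the decomposition (SS5), formula~\eqref{vutr68l909} for $\rho^{(k)}$, and the substitution $\xi=u\chi_\Lambda$ with an approximation step. Your treatment of the approximation is in fact more careful than the paper's, which in Corollary~\ref{f675ew} simply writes ``by an approximation argument'' without addressing boundary mass or regularity.
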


\begin{proposition}
The statement of Proposition \ref{uktro876p} remains true  for a Sheffer sequence $(S^{(n)})_{n=0}^\infty$ with generating function
\eqref{cye56o}.
\end{proposition}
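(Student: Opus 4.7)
The plan is to follow closely the generating-function argument given in the proof of Proposition~\ref{uktro876p}, simply replacing the binomial-type generating function \eqref{bhgyut67rt} with the Sheffer generating function \eqref{cye56o}. The key observation is that the disjoint-supports hypothesis $\{x:\xi(x)\neq 0\}\cap\{x:\phi(x)\neq 0\}=\varnothing$ yields the pointwise identity $(\xi+\phi)^k=\xi^k+\phi^k$ for every $k\geq 1$, hence the identity $a(\xi+\phi)=a(\xi)+a(\phi)$ in $\mathcal S(\mathcal D,\mathcal D)$, exactly as already used inside the proof of Proposition~\ref{uktro876p}.

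Next I would observe that this additivity propagates through the composition $c\circ a$. Since $a(\xi)(x)=\sum_{k\geq 1}a_k\xi(x)^k$ vanishes wherever $\xi(x)=0$, the formal series $a(\xi)$ and $a(\phi)$ are again supported on disjoint sets. Because $c(u)=\sum_{k\geq 1}c_k u^k$ has no constant term, the analogous pointwise argument gives
$$c(a(\xi)+a(\phi))=c(a(\xi))+c(a(\phi))$$
as elements of $\mathcal S(\mathcal D,\mathcal D)$, and then, after pairing with the constant $\mathbf 1$ via the integration $\langle\,\cdot\,\rangle=\int_{\R^d}(\cdot)\,dx$,
$$\langle c(a(\xi+\phi))\rangle=\langle c(a(\xi))\rangle+\langle c(a(\phi))\rangle$$
in $\mathcal S(\mathcal D,\R)$. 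Consequently, the Sheffer generating function factorizes,
$$\exp\!\bigl[\langle\omega,a(\xi+\phi)\rangle-\langle c(a(\xi+\phi))\rangle\bigr]=\exp\!\bigl[\langle\omega,a(\xi)\rangle-\langle c(a(\xi))\rangle\bigr]\cdot\exp\!\bigl[\langle\omega,a(\phi)\rangle-\langle c(a(\phi))\rangle\bigr].$$

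Now I would expand both sides as formal tensor power series via \eqref{cye56o}. On the right-hand side, multiplying the two Sheffer generating series yields
$$\sum_{k,n\geq 0}\frac{1}{k!\,n!}\,\langle S^{(k)}(\omega),\xi^{\otimes k}\rangle\,\langle S^{(n)}(\omega),\phi^{\otimes n}\rangle,$$
while on the left-hand side, expanding $(\xi+\phi)^{\otimes n}$ exactly as in display~\eqref{gyygt8} produces
$$\sum_{k,n\geq 0}\frac{1}{k!\,n!}\,\langle S^{(k+n)}(\omega),\xi^{\otimes k}\odot\phi^{\otimes n}\rangle.$$
Equating these formal series coefficient-wise yields the product identity
$$\langle S^{(k+n)}(\omega),\xi^{\otimes k}\odot\phi^{\otimes n}\rangle=\langle S^{(k)}(\omega),\xi^{\otimes k}\rangle\,\langle S^{(n)}(\omega),\phi^{\otimes n}\rangle,$$
which is the analogue of \eqref{gyr7r5e}; the analogue of \eqref{uf7r6o8} then follows immediately from the binomial expansion $(\xi+\phi)^{\otimes n}=\sum_{k=0}^n\binom{n}{k}\xi^{\otimes k}\odot\phi^{\otimes(n-k)}$.

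The only technical point requiring care is the formal-series identity $\langle c(a(\xi+\phi))\rangle=\langle c(a(\xi))\rangle+\langle c(a(\phi))\rangle$: although intuitively clear from the pointwise disjoint-support argument, one must interpret it inside $\mathcal S(\mathcal D,\R)$, comparing coefficients of $\xi^{\otimes j}\odot\phi^{\otimes m}$ for all $j,m$ and invoking the compositional structure of formal series recalled in the Appendix (Proposition~\ref{dr6e7i8o} and Remark~\ref{drs5w}). Once this is verified, the proof is a routine repetition of the scheme used for Proposition~\ref{uktro876p}.
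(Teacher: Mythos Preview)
Your proposal is correct and follows essentially the same route as the paper: the paper's proof simply records the factorization
\[
\exp\big[\langle\omega,a(\xi+\phi)\rangle- \langle c(a(\xi+\phi))\rangle\big]
= \exp\big[\langle\omega,a(\xi)\rangle- \langle c(a(\xi))\rangle\big]\,\exp\big[\langle\omega,a(\phi)\rangle- \langle c(a(\phi))\rangle\big]
\]
under the disjoint-support hypothesis and then refers back to Proposition~\ref{uktro876p}, which is exactly what you do (with more detail). One minor point: the Appendix references you cite for the formal-series identity, Proposition~\ref{dr6e7i8o} and Remark~\ref{drs5w}, concern multiplicative inverses rather than composition; the relevant compositional definitions are Definitions~\ref{ydre7i5r8}--\ref{cxes6u54}.
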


\begin{proof} Analogously to the proof of Proposition \ref{uktro876p}, we note that, for any $\xi,\phi\in\mathcal D$ satisfying \eqref{gyut86},
\begin{align*}
&\exp\big[\langle\omega,a(\xi+\phi)\rangle- \langle c(a(\xi+\phi))\rangle\big]\\
&\quad= \exp\big[\langle\omega,a(\xi)\rangle- \langle c(a(\xi))\rangle\big] \exp\big[\langle\omega,a(\phi)\rangle- \langle c(a(\phi))\rangle\big].
\end{align*}
The rest of the proof is similar to that of Proposition \ref{uktro876p}.
\end{proof}

We will now consider  examples of lifted Sheffer sequences.

\subsection{Hermite polynomials on $\mathcal D'$}

The sequence of the Hermite polynomials on $\R$, $(s_n)_{n=0}^\infty$,  is the Appell sequence on $\R$ with $c(u)=\frac{u^2}{2}$. The Hermite polynomials are orthogonal with respect to the standard Gaussian (normal) distribution on $\R$. The  lifting of $(s_n)_{n=0}^\infty$ is the sequence
 of {\it Hermite polynomials on $\mathcal{D}'$}, $(S^{(n)})_{n=0}^\infty$, that has the
generating function
\begin{equation}\label{ytr8o67p}
\sum_{n=0}^\infty\frac{1}{n!}\langle S^{(n)}(\omega),\xi^{\otimes n}\rangle
=\exp\left[\langle\omega,\xi\rangle-\frac{\langle\xi^2\rangle}{2}\right]=\exp\left[\langle\omega,\xi\rangle-\frac12\|\xi\|^2_{L^2(\R^d,dx)}\right].
\end{equation}

\begin{remark}\label{Hermite}For each $\xi\in\mathcal D$ with $\|\xi\|_{L^2(\R^d,dx)}=1$, we get from \eqref{ytr8o67p} that
\begin{equation}\label{vfytrl8}
\langle S^{(n)}(\omega),\xi^{\otimes n}\rangle=s_n(\langle\omega,\xi\rangle).\end{equation}
We see that the Hermite polynomials, $S^{(n)}$, do not actually make use of the spatial structure of the underlying space, $\mathcal D'$, but  essentially use only the Hilbert space structure of $L^2(\R^d,dx)$. Formula \eqref{vfytrl8} is an exceptional property of the infinite-dimensional Hermite polynomials, compare with the general case discussed in Corollary \ref{stsw5}.
\end{remark}

Using either Proposition \ref{iut87o69} or Corollary \ref{uyt8p}, we easily get an explicit formula
\[
\langle S^{(n)}(\omega),\xi^{\otimes n}\rangle=\sum_{k=0}^{[\frac{n}{2}]}\binom{n}{2k}\frac{(2k)!}{k!\,2^k}(-\langle\xi^2\rangle)^k\langle\omega,\xi\rangle^{n-2k},
\]
where $[\frac{n}{2}]$ denotes the largest integer $\le \frac{n}{2}$.
(Note that $\frac{(2k)!}{k!\,2^k}$ is the number of all partitions $\pi\in\mathfrak P(2k)$ such that each set from the partition $\pi$ has precisely two elements.)

Let $\mu$ be the probability measure on $\mathcal D'$ that has Fourier transform
$$\int_{\mathcal D'}\exp[i\langle\omega,\xi\rangle]\,d\mu(\omega)=\exp\left[-\frac12\|\xi\|^2_{L^2(\R^d,dx)}\right], \quad \xi\in\mathcal D.$$
The measure $\mu$ is called the {\it Gaussian white noise measure}. The Hermite polynomials $(S^{(n)})_{n=0}^\infty$ are orthogonal with respect to $\mu$, and furthermore, for any $m,n\in\mathbb N$, $f^{(m)}\in\mathcal D^{\odot m}$, and $g^{(n)}\in\mathcal D^{\odot  n}$,
\begin{equation}\label{tyr78969p}
\int_{\mathcal D'}\langle S^{(m)}(\omega),f^{(m)}\rangle\langle S^{(n)}(\omega),g^{(n)}\rangle\,d\mu(\omega)=\delta_{m,n}n!(f^{(m)},g^{(n)})_{L^2(\R^d,dx)^{\odot n}},\end{equation}
see e.g.\  \cite{BK,HKPS}.

As pointed out in the Introduction, the infinite dimensional Hermite polynomials are well-known and play
a fundamental  role in Gaussian white noise analysis, see e.g.\ \cite{BK,HOUZ,HKPS,Obata} and the references therein. In white noise analysis,
one usually writes $:\omega^{\otimes n}:$ for  $S^{(n)}(\omega)$ and call it  the $n$th Wick power of $\omega$. In that context, the transformation $T$ given by formula \eqref{dre6u3i7} is known as the $C$-transform.

\subsection{Charlier polynomials on $\mathcal D'$}
The sequence of the Charlier polynomials on $\R$, $(s_n)_{n=0}^\infty$, is the Sheffer sequence with $a(u)=\log(1+u)$ and $c(u)=e^u-1$, so that $\lambda(u)=-u$. The Charlier polynomials are orthogonal with respect to the Poisson distribution corresponding to the intensity parameter 1. The  lifting of $(s_n)_{n=0}^\infty$ is the sequence
 of the {\it Charlier polynomials on $\mathcal{D}'$}, $(S^{(n)})_{n=0}^\infty$, that has the
generating function
\[
\sum_{n=0}^\infty\frac{1}{n!}\langle S^{(n)}(\omega),\xi^{\otimes n}\rangle
=\exp\left[\langle\omega,\log(1+\xi)\rangle-\langle\xi\rangle\right].
\]
Note that the corresponding binomial sequence is $((\omega)_n)_{n=0}^\infty$, the falling factorials on~$\mathcal D'$.

By Proposition \ref{iut87o69},
\begin{equation}\label{bhjgufydr7}
 \langle S^{(n)}(\omega),\xi^{\otimes n}\rangle=\sum_{k=0}^n\binom nk \langle-\xi\rangle^k\langle(\omega)_{n-k},\xi^{\otimes(n-k)}\rangle.\end{equation}
Furthermore, by Corollary \ref{uftufr7t}, we obtain
\begin{equation}\label{bgig8igt8ytg}
\langle(\omega)_{n},\xi^{\otimes n}\rangle=\sum_{k=0}^n \binom nk \langle\xi\rangle^k\langle S^{(n-k)}(\omega),\xi^{\otimes (n-k)}\rangle.
\end{equation}
Compare formulas \eqref{bhjgufydr7} and \eqref{bgig8igt8ytg} with Corollaries~2.9 and  2.10 in \cite{KKO2}, respectively. Note that the latter results were obtained only for $\omega$ from the configuration space $\Gamma$.

Let $\mu$ be the probability measure on $\mathcal D'$ that has Fourier transform
$$\int_{\mathcal D'}\exp[i\langle\omega,\xi\rangle]\,d\mu(\omega)=\exp\left[\int_{\R^d}(e^{i\xi(x)}-1)\,dx\right],\quad\xi\in\mathcal D.$$
The measure $\mu$ is concentrated on the configuration space $\Gamma$ and is called the {\it Poisson point process}, or the {\it Poisson white noise measure}. The Charlier polynomials $(S^{(n)})_{n=0}^\infty$ are orthogonal with respect to the Poisson point process $\mu$ and  formula \eqref{tyr78969p} holds true in this case.

The Charlier polynomials $(S^{(n)})_{n=0}^\infty$  play a fundamental role in Poisson analysis, see e.g.\ \cite{KKO,Kondratievetal,IK}. In this analysis, the transformation $T$ given by formula \eqref{dre6u3i7} is also known as the $C$-transform.

\subsection{Orthogonal Laguerre polynomials on $\mathcal D'$}

It follows from \eqref{hgfdyr77} that, for each parameter $k>-1$, the sequence of the Laguerre polynomials $(p_n^{[k]})_{n=0}^\infty$ on $\R$ corresponding to the parameter $k$ is a Sheffer sequence, whose corresponding binomial sequence is $(p_n)_{n=0}^\infty=(p_n^{[-1]})_{n=0}^\infty$, see \eqref{nhtfr76er}.
For each $k>-1$, the Laguerre polynomials $(p_n^{[k]})_{n=0}^\infty$  are orthogonal with respect to the gamma distribution
$$\frac1{\Gamma(k+1)}\,\chi_{(0,\infty)}(t)t^ke^{-t}\,dt.$$
In particular, for $k=0$, the Laguerre polynomials $(s_n)_{n=0}^\infty:=(p_n^{[0]})_{n=0}^\infty$ are orthogonal  with respect to the exponential distribution $\chi_{(0,\infty)}(t)e^{-t}\,dt$ on $\R$.
By \eqref{hgfdyr77},  $(s_n)_{n=0}^\infty$ is the Sheffer sequence with $a(u)=\frac{u}{1+u}$ and $c(u)=-\log(1-u)$, so that $\lambda(u)=-\log(1+u)$.

The lifting of $(s_n)_{n=0}^\infty$ is the sequence $(S^{(n)})_{n=0}^\infty$ of the {\it Laguerre polynomials on $\mathcal D'$} that has the generating function
\begin{equation}\label{cr7o8}
\sum_{n=0}^\infty\frac{1}{n!}\langle S^{(n)}(\omega),\xi^{\otimes n}\rangle
=\exp\left[\left\langle\omega,\frac{\xi}{1+\xi}\right\rangle-\langle\log(1+\xi)\rangle\right].
\end{equation}
Note that the corresponding polynomial sequence of binomial type is the Laguerre sequence $(P^{(n)})_{n=0}^\infty$ with generating function \eqref{gdrydr6es6}, see Subsection \ref{hjgvfd7r6}.
Analogously to formula \eqref{yre75o}, we will now present a combinatorial formula for $\langle S^{(n)}(\omega),\xi^{\otimes n}\rangle$.

 We can identify each permutation $\pi\in \mathfrak S(n)$ with  $c(\pi):=\{\nu_1,\dots,\nu_k\}$, the set of the cycles in $\pi$. For each cycle $\nu_i\in c(\pi)$, we denote by $|\nu_i|$ the length of the cycle $\nu_i$. We define
$$\mathfrak {MS}(n):=\big\{(\pi,\mathfrak m_\pi)\mid \pi\in \mathfrak S(n),\, \mathfrak m_\pi:c(\pi)\to\{+,-\} \big\},$$
compare with the definition of  $\mathfrak {MP}(n)$ above.

Note that, for a given subset of $\mathbb N$ that has $m$ elements, there are $(m-1)!$ cycles of length $m$ that contain the points from this set. Hence, by  Corollary \ref{uyt8p} and \eqref{cr7o8}, we  get:
$$\langle S^{(n)}(\omega),\xi^{\otimes n}\rangle=\!\!\sum_{(\pi,\mathfrak m_\pi)\in\mathfrak{MS}(n)}\!\!
\left(\prod_{\nu\in c(\pi):\, \mathfrak m_\pi(\nu)=+}|\nu|\big\langle -\omega,(-\xi)^{|\nu|}\big\rangle\right)\left(\prod_{\nu\in c(\pi):\, \mathfrak m_\pi(\nu)=-}\big\langle(-\xi)^{|\nu|}\big\rangle\right).
$$

 By Corollary \ref{stsw5}, formula \eqref{teei785} holds with  $(\tilde s_n)_{n=0}^\infty=(p_n^{[k]})_{n=0}^\infty$,
  the Laguerre polynomials on $\R$ corresponding to the parameter $k=\operatorname{vol}(\Lambda)-1>-1$.

 Let $\mu$ be the probability measure on $\mathcal D'$ that has the Laplace transform
 $$\int_{\mathcal D'}e^{-\langle\omega,\xi\rangle}\,d\mu(\omega)=
 \exp\left[-\int_{\R^d}\log(1+\xi(x))\,dx\right],\quad \xi\in\mathcal D,\ \xi>-1. $$
 The $\mu$ is called the {\it gamma measure}, or the {\it gamma completely random measure}. It is concentrated on the set of all (positive) discrete Radon measures $\eta=\sum_{i}s_i\delta_{x_i}\in\mathbb M(\R^d)$ with $s_i>0$ for all $i$. Note that, with $\mu$-probability one, the set of atoms of $\eta$, $\{x_i\}$, is dense in $\R^d$.

As follows from \cite{Kondratievetal,KL}, the Laguerre polynomials $(S^{(n)})_{n=0}^\infty$ are orthogonal with respect to the gamma measure $\mu$, and furthermore, for any $\xi,\psi\in\mathcal D$ and $m,n\in\N$,
$$
\int_{\mathcal D'}\langle S^{(m)}(\omega),\xi^{\otimes m}\rangle\langle S^{(n)}(\omega),\psi^{\otimes n}\rangle\,d\mu(\omega)=\delta_{m,n}n!\sum_{\pi\in \mathfrak S(n)}\prod_{\nu\in c(\pi)}\big\langle(\xi\psi)^{|\nu|}\big\rangle.$$

The Laguerre polynomials $(S^{(n)})_{n=0}^\infty$  play a fundamental role in gamma analysis, see e.g.\ \cite{Kondratievetal,KL,L1,L2}.

\subsection*{Acknowledgments}
The authors acknowledge the financial support of the SFB 701 ``Spectral
structures and topological methods in mathematics'', Bielefeld University.
MJO  was supported by the Portuguese national funds through
FCT---Funda{\c c}\~ao para a Ci\^encia e a Tecnologia, within the project
UID/MAT/04561/2013. YK and DF were supported by the European
Commission under the project STREVCOMS PIRSES-2013-612669.

\appendix
\renewcommand{\thesection}{A}
\section*{Appendix: Formal tensor power series}

We fix a general Gel'fand triple \eqref{fd6re7i}.
The following proposition is a direct consequence of formula \eqref{vut9p}.

\setcounter{theorem}{0}
\setcounter{equation}{0}

\begin{proposition}\label{dr6e7i8o}  Let  $F(\xi)=\sum_{n=0}^\infty\langle F^{(n)},\xi^{\otimes n}\rangle\in\mathcal S(\Phi,\R)$ be such that $F(0)=F^{(0)}\ne0$. Then there exists a unique $\sum_{n=0}^\infty\langle G^{(n)},\xi^{\otimes n}\rangle\in\mathcal S(\Phi,\R)$ such that
$$\left(\sum_{n=0}^\infty \langle F^{(n)},\xi^{\otimes n}\rangle\right)\left(\sum_{n=0}^\infty \langle G^{(n)},\xi^{\otimes n}\rangle\right)=1.$$
Explicitly, $G^{(0)}=1/F^{(0)}$ and for $n\ge1$, $G^{(n)}$ is recursively given by
$$G^{(n)}=-\frac1{F^{(0)}}\sum_{i=0}^{n-1} F^{(n-i)}\odot G^{(i)}.$$
We will denote
$$\left(\sum_{n=0}^\infty \langle F^{(n)},\xi^{\otimes n}\rangle\right)^{-1}:=\sum_{n=0}^\infty \langle G^{(n)},\xi^{\otimes n}\rangle.$$
\end{proposition}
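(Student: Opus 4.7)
The plan is to exploit the product formula \eqref{vut9p} and reduce the statement to a triangular system of equations indexed by $n\in\mathbb N_0$, which is then solved recursively. First, I would observe that the formal series $1\in\mathcal S(\Phi,\R)$ corresponds, under the isomorphism $\mathcal I$ of \eqref{ukt896p078}, to the element $(1,0,0,\dots)\in\mathcal F(\Phi')$. By \eqref{vut9p}, the product of $F(\xi)=\sum_{n=0}^\infty\langle F^{(n)},\xi^{\otimes n}\rangle$ and a candidate $G(\xi)=\sum_{n=0}^\infty\langle G^{(n)},\xi^{\otimes n}\rangle$ is the formal series with $n$th coefficient $\sum_{i=0}^n F^{(i)}\odot G^{(n-i)}\in\Phi'{}^{\odot n}$. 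Since two elements of $\mathcal S(\Phi,\R)$ coincide if and only if the corresponding sequences in $\mathcal F(\Phi')$ are equal coordinate-wise, the identity $F(\xi)G(\xi)=1$ is equivalent to the system
\begin{equation*}
\sum_{i=0}^n F^{(i)}\odot G^{(n-i)}=\begin{cases}1,&n=0,\\ 0,& n\ge 1.\end{cases}
\end{equation*}

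Next, I would solve this system recursively in $n$. The equation for $n=0$ reads $F^{(0)}G^{(0)}=1$, and since $F^{(0)}\ne 0$ by assumption, it uniquely determines $G^{(0)}=1/F^{(0)}\in\R=\Phi'{}^{\odot 0}$. Assuming $G^{(0)},\dots,G^{(n-1)}$ have been uniquely defined and belong to the respective spaces $\Phi'{}^{\odot i}$, the equation at level $n$ singles out the term $F^{(0)}\odot G^{(n)}=F^{(0)}G^{(n)}$ (because $F^{(0)}$ is a scalar) and yields
\begin{equation*}
G^{(n)}=-\frac{1}{F^{(0)}}\sum_{i=0}^{n-1}F^{(n-i)}\odot G^{(i)},
\end{equation*}
which is precisely the asserted recursion (after the change of summation variable $j=n-i$). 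Since each $F^{(n-i)}\odot G^{(i)}\in\Phi'{}^{\odot n}$ and $\Phi'{}^{\odot n}$ is a real vector space, $G^{(n)}$ belongs to $\Phi'{}^{\odot n}$, completing the inductive step. The construction gives existence, and at each step the value of $G^{(n)}$ is forced by the equation, so uniqueness follows simultaneously.

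There is no real obstacle in this argument; the only point that requires a brief justification is that the coefficient-wise identification between $\mathcal S(\Phi,\R)$ and $\mathcal F(\Phi')$ is legitimate, which is built into Definition~\ref{es5w46u3} and the bijectivity of $\mathcal I$ in \eqref{ukt896p078}. The recursion is well-posed precisely because the $n$th equation isolates $G^{(n)}$ against a coefficient $F^{(0)}$ that is invertible in $\R$, so the hypothesis $F^{(0)}\ne 0$ is used in an essential way.
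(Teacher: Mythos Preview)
Your argument is correct and is precisely the approach the paper takes: it states that the proposition is a direct consequence of the product formula \eqref{vut9p}, and your write-up simply spells out that consequence by matching coefficients and solving the resulting triangular system recursively.
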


\begin{remark}\label{drs5w}
It follows from  Proposition \ref{dr6e7i8o} that, for any $F(\xi),G(\xi)\in\mathcal S(\Phi,\R)$ with  $F(0)\ne0$, we obtain
$$\frac{G(\xi)}{F(\xi)}\in\mathcal S(\Phi,\R).$$
\end{remark}

\begin{definition}\label{ydre7i5r8}
Let  $R(t)=\sum_{n=0}^\infty r_nt^n\in \mathcal S(\R,\R)$.  For each $F(\xi)=\sum_{n=1}^\infty\langle F^{(n)},\xi^{\otimes n}\rangle\in\mathcal S(\Phi,\R)$ with $F(0)=0$, we define a {\it composition of $R$ and $F$}, denoted by $R\circ F(\xi)$ or
$$R(F(\xi))=\sum_{n=0}^\infty r_n\left(\sum_{k=1}^\infty
\langle F^{(k)},\xi^{\otimes k}\rangle
\right)^n,$$
as the formal series $G(\xi)=\sum_{n=0}^\infty \langle G^{(n)},\xi^{\otimes n}\rangle\in\mathcal S(\Phi,\R)$ with $G^{(0)}:=r_0$ and
$$G^{(n)}:=\sum_{m=1}^n\sum_{\substack{(k_1,\dots,k_m)\in\mathbb N^m\\ k_1+\dots+ k_m=n}}r_m\big(F^{(k_1)}\odot F^{(k_2)}\odot\dots\odot F^{(k_m)}\big),\quad n\in\mathbb N.$$
\end{definition}

\begin{definition}\label{cyr7t88p}
Let
$ A(\xi)=\sum_{n=1}^\infty  A_n \xi^{\otimes n},\  B(\xi)=\sum_{n=1}^\infty  B_n \xi^{\otimes n}\in\mathcal S(\Phi,\Phi)$. We define a {\it composition of $A$ and $B$}, denoted by $A\circ B(\xi)$ or 
$$ A(B(\xi))=\sum_{n=1}^\infty  A_n \left(\sum_{k=1}^\infty B_k\xi^{\otimes k}\right)^{\otimes n},$$
as the formal series $C(\xi)=\sum_{n=1}^\infty  C_n\xi^{\otimes n}\in\mathcal S(\Phi,\Phi)$ with
\begin{equation}\label{e7589}
 C_n:=
\sum_{m=1}^n\sum_{\substack{(k_1,\dots,k_m)\in\mathbb N^m\\ k_1+\dots+ k_m=n}}
 A_m\big(
 B_{k_1}\odot B_{k_2}\odot\dots\odot B_{k_m}
\big),\quad n\in\mathbb N.\end{equation}
Here, for  $(k_1,\dots,k_m)\in\mathbb N^m$ with $k_1+\dots+ k_m=n$, we denote
$$ B_{k_1}\odot B_{k_2}\odot\dots\odot B_{k_m}:=
\operatorname{Sym}_n( B_{k_1}\otimes B_{k_2}\otimes\dots\otimes B_{k_m}),$$
where $\operatorname{Sym}_n\in\mathcal L(\Phi^{\otimes n},\Phi^{\odot n})$ is the operator of symmetrization, see \eqref{cyrd6ue7r}.
\end{definition}

Similarly to Definitions \ref{ydre7i5r8} and  \ref{cyr7t88p}, we give the following

\begin{definition}\label{cxes6u54} Let  $F(\xi)=\sum_{n=0}^\infty\langle F^{(n)},\xi^{\otimes n}\rangle\in\mathcal S(\Phi,\R)$ and $ A(\xi)=\sum_{n=1}^\infty  A_n \xi^{\otimes n}\in\mathcal S(\Phi,\Phi)$. We define a composition of $F$ and $ A$, denoted by $F\circ A(\xi)$ or 
$$
F( A(\xi))=\sum_{n=0}^\infty\left\langle F^{(n)},\left(\sum_{k=1}^\infty  A_k \xi^{\otimes k}\right)^{\otimes n}\right\rangle,$$
as the  formal  series $G(\xi)=\sum_{n=0}^\infty\langle G^{(n)},\xi^{\otimes n}\rangle\in\mathcal S(\Phi,\R)$ with
 $G^{(0)}:=F^{(0)}$ and
$$ G^{(n)}:=\sum_{m=1}^n\sum_{\substack{(k_1,\dots,k_m)\in\mathbb N^m\\ k_1+\dots+ k_m=n}}( A_{k_1}^*\odot  A_{k_2}^*\odot\dots\odot  A_{k_m}^*)F^{(m)},\quad n\in\mathbb N.$$
Here $ A_k^*\in\mathcal{L}(\Phi',\Phi'{}^{\odot k})$ is the adjoint of $ A_k$.
\end{definition}

\begin{remark}\label{v65u45}
It follows from Definition \ref{cxes6u54} that
\begin{align}
&F( A(\xi))=F^{(0)}\notag\\
&\quad+\sum_{n=1}^\infty\left( \sum_{m=1}^n\sum_{\substack{(k_1,\dots,k_m)\in\mathbb N^m\\ k_1+\dots+ k_m=n}} \big\langle F^{(m)}, ( A_{k_1}\xi^{\otimes k_1})\odot ( A_{k_2}\xi^{\otimes k_2})\odot\dots\odot ( A_{k_m}\xi^{\otimes k_m})\big\rangle\right).\notag
\end{align}
\end{remark}

\begin{proposition}\label{yde57o5}
Let $F(\xi)\in \mathcal S(\Phi,\R)$ and let $ A(\xi), B(\xi)\in \mathcal S(\Phi,\Phi)$.
Then $$(F\circ A)\circ B(\xi)=F\circ (A\circ B)(\xi),$$
 the equality in $\mathcal S(\Phi,\R)$.
\end{proposition}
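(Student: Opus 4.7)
The plan is to verify the identity coefficient-by-coefficient in $\mathcal S(\Phi,\R)$, exploiting the trivial but pivotal observation (made in Section 2, right after the definition of $\odot$) that $\eta^{\odot m}=\eta^{\otimes m}$ for every $\eta\in\Phi$. This allows the symmetric tensor powers of the formal series $A(\xi),B(\xi)\in\mathcal S(\Phi,\Phi)$ to be replaced, termwise in $\xi$, by ordinary tensor powers, so that both sides of the asserted equality can be reduced to a common explicit expansion and no delicate bookkeeping of symmetrization factors is required.

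First I would rewrite the right-hand side. By Remark~A.3, for $G\in\mathcal S(\Phi,\Phi)$ with $G(0)=0$ one has $F\circ G(\xi)=F^{(0)}+\sum_{p\geq1}\langle F^{(p)},G(\xi)^{\odot p}\rangle$, understood by collecting coefficients of $\xi^{\otimes N}$. Because $B(\xi)^{\odot k}=B(\xi)^{\otimes k}$, a direct comparison of the definition of $C_n$ in Definition~A.2 with the naive expansion of $A(B(\xi)):=\sum_m A_m B(\xi)^{\otimes m}$ gives $(A\circ B)(\xi)=A(B(\xi))$ as formal series in $\xi$. Expanding the $p$-th symmetric tensor power of $A(B(\xi))$ therefore yields
$$F\circ(A\circ B)(\xi)=F^{(0)}+\sum_{p\geq1}\sum_{k_1,\ldots,k_p\geq1}\big\langle F^{(p)},(A_{k_1}B(\xi)^{\otimes k_1})\odot\cdots\odot(A_{k_p}B(\xi)^{\otimes k_p})\big\rangle.$$

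Second I would show the left-hand side has the same expansion. Setting $\tilde F:=F\circ A$, Definition~A.4 gives $\tilde F^{(m)}=\sum_{p=1}^m\sum_{k_1+\cdots+k_p=m}(A_{k_1}^*\odot\cdots\odot A_{k_p}^*)F^{(p)}$. A further application of Remark~A.3, together with $B(\xi)^{\odot m}=B(\xi)^{\otimes m}$ and reindexing the double sum over $m$ and $(k_1,\ldots,k_p)$ with $k_1+\cdots+k_p=m$ as a single sum over $(k_1,\ldots,k_p)$, gives
$$(F\circ A)\circ B(\xi)=F^{(0)}+\sum_{p\geq1}\sum_{k_1,\ldots,k_p\geq1}\big\langle(A_{k_1}^*\odot\cdots\odot A_{k_p}^*)F^{(p)},B(\xi)^{\otimes(k_1+\cdots+k_p)}\big\rangle.$$
Now I would move the adjoints across the pairing using the elementary identity $(A_{k_1}\odot\cdots\odot A_{k_p})\eta^{\otimes m}=(A_{k_1}\eta^{\otimes k_1})\odot\cdots\odot(A_{k_p}\eta^{\otimes k_p})$, valid for any $\eta\in\Phi$ with $m=k_1+\cdots+k_p$; it is immediate from the definition of $\odot$ of operators because $\eta^{\otimes m}$ is already symmetric, so the symmetrization in the codomain acts only on the $p$ output slots. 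Applying this termwise in $\xi$ with $\eta=B(\xi)$ recovers precisely the expansion displayed in the previous paragraph, so the two formal series coincide.

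The main obstacle is justifying the formal manipulations, in particular the identity $(A_{k_1}\odot\cdots\odot A_{k_p})B(\xi)^{\otimes m}=(A_{k_1}B(\xi)^{\otimes k_1})\odot\cdots\odot(A_{k_p}B(\xi)^{\otimes k_p})$, when $B(\xi)$ is a formal series rather than a bona fide element of $\Phi$. This is handled by working at each fixed total degree $N$ of $\xi$: only finitely many multi-indices $(k_{i,j})$ contribute to the coefficient of $\xi^{\otimes N}$, so all sums truncate to finite sums, the identity reduces to an instance of its genuine counterpart in $\Phi^{\odot\bullet}$, and the reorderings of summations are legitimized by the grading in $\xi$. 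Once all coefficients at $\xi^{\otimes N}$ have been shown equal for every $N\in\mathbb N_0$, the proposition follows.
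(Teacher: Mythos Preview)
Your proposal is correct and is essentially what the paper has in mind: the paper's own proof consists of the single sentence ``The proposition follows from Definitions~\ref{cyr7t88p} and~\ref{cxes6u54}, see also Remark~\ref{v65u45}. We leave the details to the interested reader.'' You have supplied precisely those details, carrying out the coefficient-by-coefficient comparison that the authors defer, and your handling of the only genuinely delicate point---that the formal identity $B(\xi)^{\odot m}=B(\xi)^{\otimes m}$ is justified at each fixed $\xi$-degree because the finite sum over ordered tuples $(k_1,\dots,k_m)$ is already symmetric under permutation of the tensor factors---is sound.
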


\begin{proof}
The proposition follows from  Definitions \ref{cyr7t88p} and \ref{cxes6u54}, see also Remark~\ref{v65u45}. We leave the details to the interested reader.
\end{proof}

\begin{remark}\label{yjrde7ik87}
In view of Proposition \ref{yde57o5}, we may just write $F\circ A\circ B(\xi)$.
As easily seen, a similar statement  also holds for the composition $S\circ R\circ F(\xi)\in\mathcal S(\Phi,\R)$, where $S,R\in\mathcal S(\R,\R)$ and $F\in\mathcal S(\Phi,\R)$, and for the composition $A\circ B\circ C(\xi)$, where $A,B,C\in\mathcal S(\Phi,\Phi)$.
\end{remark}

\begin{definition}\label{vtre7i}
 Let
$ A(\xi)\in\mathcal S(\Phi,\Phi)$. Then $ B(\xi)\in\mathcal S(\Phi,\Phi)$ is called the {\it compositional inverse of $A(\xi)$} if $A\circ B(\xi)=B\circ A(\xi)=\xi$. 
\end{definition}

\begin{remark}
Note that, if  $ B(\xi)\in\mathcal S(\Phi,\Phi)$ is the compositional inverse of $ A(\xi)\in\mathcal S(\Phi,\Phi)$, then $A(\xi)$ is the compositional inverse of $B(\xi)$.
\end{remark}

\begin{proposition}\label{rw4w4w5}
 Let
$ A(\xi)=\sum_{n=1}^\infty  A_n \xi^{\otimes n}\in\mathcal S(\Phi,\Phi)$ with  $ A_1\in\mathcal L(\Phi)$ being a homeo\-morphism.
Then there exists a unique compositional inverse  $ B(\xi)$ of $A(\xi)$.  \end{proposition}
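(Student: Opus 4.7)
The plan is to construct $B(\xi)=\sum_{n=1}^\infty B_n\xi^{\otimes n}$ coefficient by coefficient from the equation $A\circ B(\xi)=\xi$, then deduce that a right inverse is automatically two-sided (and unique) via the associativity of composition noted in Remark~\ref{yjrde7ik87}.

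First, I would unpack Definition~\ref{cyr7t88p} and match coefficients of $\xi^{\otimes n}$ on both sides of $A\circ B(\xi)=\xi$. For $n=1$ this forces $A_1B_1=\mathbf 1$, so I set $B_1:=A_1^{-1}$, which exists and belongs to $\mathcal L(\Phi)$ because $A_1$ is a homeomorphism. For $n\ge 2$, the coefficient equation reads
\begin{equation*}
A_1B_n=-\sum_{m=2}^n\sum_{\substack{(k_1,\dots,k_m)\in\mathbb N^m\\ k_1+\dots+k_m=n}}A_m\bigl(B_{k_1}\odot B_{k_2}\odot\dots\odot B_{k_m}\bigr).
\end{equation*}
Since $m\ge 2$ and $k_i\ge 1$ imply $k_i\le n-1$, the right-hand side only involves $B_1,\dots,B_{n-1}$, so $B_n$ is uniquely determined by the recursion $B_n:=-A_1^{-1}(\cdots)$. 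This simultaneously gives existence and uniqueness of a right inverse $B\in\mathcal S(\Phi,\Phi)$ at the level of coefficients.

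Next I would verify by induction on $n$ that $B_n\in\mathcal L(\Phi^{\odot n},\Phi)$: $B_1=A_1^{-1}$ is continuous, and assuming $B_1,\dots,B_{n-1}$ are continuous, each operator $B_{k_1}\otimes\dots\otimes B_{k_m}$ is a continuous map between appropriate tensor-power nuclear spaces, the symmetrization $\operatorname{Sym}_m$ is continuous, $A_m\in\mathcal L(\Phi^{\odot m},\Phi)$ by hypothesis, and $-A_1^{-1}$ is continuous on $\Phi$, so the composition defining $B_n$ is continuous on $\Phi^{\odot n}$.

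To upgrade the right inverse to a two-sided inverse, I would apply the same existence argument to $B$ itself: since its leading coefficient $B_1=A_1^{-1}$ is a homeomorphism, there exists $C\in\mathcal S(\Phi,\Phi)$ with $B\circ C(\xi)=\xi$. Invoking the associativity of composition in $\mathcal S(\Phi,\Phi)$ (Remark~\ref{yjrde7ik87}) together with the identity role of $\xi$ for $\circ$, I compute
\begin{equation*}
A=A\circ\xi=A\circ(B\circ C)=(A\circ B)\circ C=\xi\circ C=C,
\end{equation*}
so $C=A$, and hence $B\circ A=B\circ C=\xi$. For uniqueness of the compositional inverse: if $B'\in\mathcal S(\Phi,\Phi)$ also satisfies $A\circ B'=B'\circ A=\xi$, then
\begin{equation*}
B'=\xi\circ B'=(B\circ A)\circ B'=B\circ(A\circ B')=B\circ\xi=B.
\end{equation*}

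The main obstacle is a bookkeeping one rather than a conceptual one: carefully confirming that the formal associativity of $\circ$ on $\mathcal S(\Phi,\Phi)$ holds (as stated in Remark~\ref{yjrde7ik87}), and that the recursive expression for $B_n$ genuinely lands in $\mathcal L(\Phi^{\odot n},\Phi)$ rather than merely in a larger operator space. Both reduce to Lemma~\ref{yde6ue6cb}(ii) and the continuity properties of symmetrization and of the $A_m$'s, so no new analytic input beyond the hypothesis that $A_1$ is a homeomorphism is needed.
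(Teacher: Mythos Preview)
Your proof is correct and follows essentially the same approach as the paper: construct a one-sided inverse by the recursion coming from the coefficient identity in Definition~\ref{cyr7t88p}, then invoke the associativity of $\circ$ (Remark~\ref{yjrde7ik87}) to upgrade to a two-sided inverse and obtain uniqueness.

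There is one minor difference worth mentioning. The paper constructs \emph{both} a right inverse $B$ (from $A\circ B=\xi$) and a left inverse $\tilde B$ (from $\tilde B\circ A=\xi$) by separate explicit recursions, and then shows $\tilde B=\tilde B\circ(A\circ B)=(\tilde B\circ A)\circ B=B$. You instead construct only the right inverse $B$ and then re-apply the same existence result to $B$ (whose leading coefficient $B_1=A_1^{-1}$ is again a homeomorphism) to obtain $C$ with $B\circ C=\xi$, concluding $C=A$ by associativity. Your route is marginally more economical since it avoids writing down the second recursion explicitly; the paper's route has the small advantage of exhibiting the left-inverse recursion formula directly. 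You also make explicit the inductive check that $B_n\in\mathcal L(\Phi^{\odot n},\Phi)$, which the paper leaves implicit.
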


\begin{proof} Let us first prove that there exists a unique  $B(\xi):=\sum_{n=1}^\infty  B_n \xi^{\otimes n}\in\mathcal S(\Phi,\Phi)$ such that $A\circ B(\xi)=\xi$. 
It follows from formula \eqref{e7589} that $C_1=A_1B_1$. Hence, for $C_1=\mathbf 1$, we must have $B_1=A_1^{-1}$. 
Now, by \eqref{e7589}  for $n\ge 2$, we get 
$$ C_n= A_1B_n+\sum_{m=2}^{n}\sum_{\substack{(k_1,\dots,k_m)\in\mathbb N^m\\ k_1+\dots+ k_m=n}}
 A_m\big(
 B_{k_1}\odot B_{k_2}\odot\dots\odot B_{k_m}
\big).$$
Hence,  we get $ C_n\mathbf=0$ for $n\ge2$ if and only if 
$$B_n=-A_1^{-1}\sum_{m=2}^{n}\sum_{\substack{(k_1,\dots,k_m)\in\mathbb N^m\\ k_1+\dots+ k_m=n}}
 A_m\big(
 B_{k_1}\odot B_{k_2}\odot\dots\odot B_{k_m}
\big).$$

Similarly, we prove that there exists a unique $\tilde B(\xi):=\sum_{n=1}^\infty  \tilde B_n \xi^{\otimes n}\in\mathcal S(\Phi,\Phi)$ such that $\tilde B\circ A(\xi)=\xi$. Here  $\tilde B_1=A_1^{-1}$ and for $n\ge2$,
$$ \tilde B_n=-\sum_{m=1}^{n-1}\sum_{\substack{(k_1,\dots,k_m)\in\mathbb N^m\\ k_1+\dots+ k_m=n}}
 \tilde B_m\big(
 A_{k_1}\odot A_{k_2}\odot\dots\odot A_{k_m}
\big)(A_1^{-1})^{\otimes n}.$$

Finally, we prove that $B(\xi)=\tilde B(\xi)$. Indeed, we get, using Remark \ref{yjrde7ik87},
$$ \tilde B(\xi)=\tilde B\circ (A\circ B)(\xi)=(\tilde B\circ A)\circ B(\xi)=B(\xi).$$
Hence, the proposition is proven.  
\end{proof}

\begin{remark}\label{yrei4vvv} It follows from Proposition \ref{rw4w4w5} and its proof that, if $ A(\xi)=\sum_{n=1}^\infty  A_n \xi^{\otimes n}\in\mathcal S(\Phi,\Phi)$ with  $A_1=\mathbf 1$, then its compositional inverse $ B(\xi):=\sum_{n=1}^\infty  B_n \xi^{\otimes n}$  exists and $B_1=\mathbf 1$. 
\end{remark}


\begin{thebibliography}{99}

\bibitem{ADKS}  Albeverio, S., Daletsky, Y.L.,   Kondratiev, Y.G., Streit, L.:  Non-Gaussian infinite-dimensional analysis.  J. Funct. Anal. 138 (1996), 311--350.

\bibitem{Anshelevich1} Anshelevich, M.: Appell polynomials and their relatives. Int. Math. Res. Not. 2004, no.~65, 3469--3531.


    
\bibitem{Anshelevich4} Anshelevich, M.: Appell polynomials and their relatives. II. Boolean theory. Indiana Univ. Math. J. 58 (2009),  929--968.
    
\bibitem{Anshelevich5} Anshelevich, M.: Appell polynomials and their relatives. III. Conditionally free theory. Illinois J. Math. 53 (2009),  39--66.

\bibitem{BBN} Barnabei, M., Brini, A.,  Nicoletti, G.: A general umbral calculus in
infinitely many variables.  Adv. Math. 50 (1983), 49--93.

\bibitem{BK}  Berezansky, Y.M.,   Kondratiev, Y.G.:  Spectral methods in infinite-dimensional analysis. Vol. 1, 2.  Kluwer Academic Publishers, Dordrecht, 1995 (translated from the 1988 Russian original).

\bibitem{BKKL}  Berezansky, Y.M., Kondratiev, Y.G.,  Kuna, T.,  Lytvynov, E.: On a spectral representation for correlation measures in configuration space analysis. Methods Funct. Anal. Topology 5 (1999), no.~4, 87--100.

\bibitem{BSU2}  Berezansky, Y.M.,  Sheftel, Z.G., Us, G.F.: Functional analysis. Vol. II.  Birkh\"auser,  Basel, 1996.

\bibitem{Bourbaki}  Bourbaki, N.: Topological vector spaces. Chapters 1--5.  Springer-Verlag, Berlin, 1987.

\bibitem{Brown} Brown, J.: On multivariable Sheffer sequences.  J. Math. Anal. Appl. 69 (1979), 398--410.

\bibitem{Casalis} Casalis, M.: The $2d+4$ simple quadratic natural exponential families on $\mathbb R^d$. Ann. Statist. 24 (1996),  1828--1854. 

\bibitem{DBL} Di Bucchianico, A., Loeb, D.: A selected survey of umbral calculus.
Electron. J. Combin. 2 (1995), Dynamic Survey 3, 28 pp. (electronic).

\bibitem{DBLR} Di Bucchianico, A., Loeb, D.,   Rota, G.-C.:
Umbral calculus in Hilbert space. In: Mathematical essays in honor of Gian-Carlo Rota (Cambridge, MA, 1996),
Progr. Math., 161, Birkh\"auser, Boston,  1998, pp.~213--238.

\bibitem{GJ} Garsia, A.,  Joni, S.: Higher dimensional polynomials of binomial type and formal power series inversion. Comm. Algebra  6 (1978), 1187--1211.


\bibitem{HKPS} Hida, T.,  Kuo, H.-H., Potthoff, J., Streit, L.: White noise. An infinite dimensional calculus.  Kluwer Academic Publishers, Dordrecht, 1993.

\bibitem{HOUZ} Holden, H., {\O}ksendal, B., Ub{\o}e, J., Zhang, T.: Stochastic partial differential equations. A modeling, white noise functional approach.  Birkh\"auser, Boston,  1996.

\bibitem{IK} Ito, Y., Kubo, I.: Calculus on Gaussian and Poisson white noises.  Nagoya Math. J. 111 (1988), 41--84.

\bibitem{KK}   Kondratiev, Y.G., Kuna, T.:  Harmonic analysis on configuration space. I. General theory.  Infin. Dimens. Anal. Quantum Probab. Relat. Top. 5 (2002), 201--233.

\bibitem{KKO}  Kondratiev, Y.G.,  Kuna, T.,   Oliveira, M.J.:  Analytic aspects of Poissonian white noise analysis. Methods Funct. Anal. Topology 8 (2002), no.~4, 15--48.

\bibitem{KKO2}  Kondratiev, Y.G.,  Kuna, T.,   Oliveira, M.J.: On the relations between Poissonian white noise analysis and harmonic analysis on configuration spaces. J. Funct. Anal. 213 (2004),  1--30.

\bibitem{KL} Kondratiev, Y.G. ,  Lytvynov, E.W.: Operators of gamma white noise calculus. Infin. Dimens. Anal. Quantum Probab. Relat. Top. 3 (2000),  303--335.

\bibitem{Kondratievetal}  Kondratiev, Y.G.,  da Silva, J.L., Streit,  L.,  Us, G.F.: Analysis on Poisson and gamma spaces. Infin. Dimens. Anal. Quantum Probab. Relat. Top. 1 (1998),  91--117.

\bibitem{KSWY}  Kondratiev, Y.G.,  Streit, L., Westerkamp, W.,  Yan, J.:  Generalized functions in infinite-dimensional analysis. Hiroshima Math. J. 28 (1998),  213--260.





\bibitem{KRY}  Kung, J.P.S.,  Rota, G.-C., Yan, C.H.: Combinatorics: the Rota way.
 Cambridge University Press, Cambridge, 2009.





\bibitem{L1} Lytvynov, E.: Polynomials of Meixner's type in infinite dimensions---Jacobi fields and orthogonality measures. J. Funct. Anal. 200 (2003),  118--149.

\bibitem{L2} Lytvynov, E.: Orthogonal decompositions for L\'evy processes with an application to the gamma, Pascal, and Meixner processes. Infin. Dimens. Anal. Quantum Probab. Relat. Top. 6 (2003), 73--102.


\bibitem{Meixner} Meixner, J.: Orthogonale Polynomsysteme mit einem besonderen
Gestalt der erzeugenden Funktion.  J. London Math.\ Soc.\  9 (1934),  6--13. 


\bibitem{Michor} Michor, P.:  Contributions to finite operator calculus in several variables.
J. Combin. Inform. System Sci.  4 (1979), 39--65.

\bibitem{MR}  Mullin, R.,  Rota, G.-C.: On the foundations of combinatorial theory. III. Theory of binomial enumeration. In: Graph theory and its applications (Proc. Advanced Sem., Math. Research Center, Univ. of Wisconsin, Madison, Wis., 1969), Academic Press, New York, pp.\ 167--213.

\bibitem{NB} Narici, L., Beckenstein, E.: Topological vector spaces. Second edition. CRC Press, Boca Raton, FL, 2011.

\bibitem{Obata} Obata, N.: White noise calculus and Fock space. Springer-Verlag, Berlin, 1994.

\bibitem{Parrish}  Parrish, C.: Multivariate umbral calculus.  J. Linear and Multilinear
Algebra  6 (1978), 93--109.

\bibitem{Pommeret1} Pommeret, D.: Orthogonal polynomials and natural exponential families. Test 5 (1996),  77--111.

\bibitem{Pommeret2}  Pommeret, D.: Orthogonal and pseudo-orthogonal multi-dimensional Appell polynomials. Appl. Math. Comput. 117 (2001),  285--299. 


\bibitem{Reiner} Reiner, D.: Multivariate sequences of binomial type.  Stud. Appl. Math. 57 (1977), 119--133.

\bibitem{Roman_multivariate} Roman, S.: The algebra of formal series. III. Several variables.  J. Approx.
Theory  26 (1979), 340--381.

\bibitem{Roman} Roman, S.: The umbral calculus. Academic Press, New York, 1984.

\bibitem{RR}  Roman, S.M.,  Rota, G.-C.: The umbral calculus. Adv. Math. 27 (1978),  95--188.


\bibitem{RKO}  Rota, G.-C.,  Kahaner, D.,  Odlyzko, A.: On the foundations of combinatorial theory. VIII. Finite operator calculus. J. Math. Anal. Appl. 42 (1973), 684--760.

\bibitem{SW} Schaefer, H.H., Wolff, M.P.: Topological vector spaces. Second edition. Springer-Verlag, New York, 1999.

\bibitem{SA} Singh, S., Asthana, S.:  Multivariate shift invariant operators.  J. Math.
Anal. Appl. 118 (1986), 422--442.



\bibitem{Ueno} Ueno, K.: General power umbral calculus in several variables. J. Pure
Appl. Algebra 59 (1989), 299--308.

\end{thebibliography}
\end{document}